\newcommand{\A}{\mathbb{A}}
\newcommand{\Z}{\mathbb{Z}}
\newcommand{\Q}{\mathbb{Q}}
\newcommand{\C}{\mathbb{C}}
\newcommand{\tr}{\operatorname{tr}}
\renewcommand{\Re}{\operatorname{Re}}
\newcommand{\simw}{\sim_w}
\newcommand{\fsim}{\sim'}
\newcommand{\der}{\operatorname{der}}
\newcommand{\univ}{\mathcal{U}}
\newcommand{\aaa}{\mathfrak{a}}
\newcommand{\nnn}{\mathfrak{n}}
\newcommand{\hhh}{\mathfrak{h}}
\newcommand{\Lieg}{\mathfrak{g}}
\newcommand{\GL}{\operatorname{GL}}
\newcommand{\prmlvl}{L}
\newcommand{\iii}{\textrm{i}}
\newcommand{\FFF}{\mathcal{F}}
\newcommand{\modulus}{\delta}
\newcommand{\geom}[1]{\mathbf{#1}}
\newcommand{\fin}{\operatorname{fin}}
\newcommand{\sprod}[2]{\left\langle#1,#2\right\rangle}  
\newcommand{\bs}{\backslash}
\newcommand{\conj}{\mathbf{C}}
\newcommand{\affine}{\mathbb{G}_a}
\newcommand{\spart}[1]{#1_s}
\newcommand{\upart}[1]{#1_u}
\newcommand{\smofrts}[2]{\xi_{#1}^{#2}}
\newcommand{\asmofrts}[2]{\tilde\xi_{#1}^{#2}}
\newcommand{\sctn}{\mathfrak{s}}
\newcommand{\ngamma}{\kappa}
\newcommand{\quotmap}{q}
\newcommand{\ad}{\operatorname{ad}}
\newcommand{\vol}{\operatorname{vol}}
\newcommand{\siegel}{\mathfrak{S}}
\newcommand{\abs}[1]{\left|{#1}\right|}
\newcommand{\norm}[1]{\lVert#1\rVert}
\newcommand{\coor}{\mathfrak{c}}
\newcommand{\Ad}{\operatorname{Ad}}
\newcommand{\pars}{\mathfrak{P}}
\newcommand{\Diff}{\mathcal{D}}
\newcommand{\fund}{\mathcal{B}}
\newcommand{\Ht}{H}
\newcommand{\roots}{\Sigma}
\newcommand{\R}{\mathbb{R}}
\newcommand{\funct}{\mathcal{C}}
\newcommand{\K}{\mathbf{K}}
\newcommand{\Orb}{\mathfrak{O}}
\newcommand{\orb}{\mathfrak{o}}
\newcommand{\cOrb}{{\tilde{\mathfrak{O}}}}
\newcommand{\corb}{{\tilde{\mathfrak{o}}}}
\newcommand{\rg}[1]{_{#1}^\circ}
\newcommand{\level}{\operatorname{level}}
\newtheorem{theorem}{Theorem}[section]
\newtheorem{lemma}[theorem]{Lemma}
\newtheorem{proposition}[theorem]{Proposition}
\newtheorem{corollary}[theorem]{Corollary}
\theoremstyle{remark}
\newtheorem{remark}[theorem]{Remark}
\begin{document}

\title{On the continuity of the geometric side of the trace formula}
\date{\today}
\begin{abstract}
We extend the geometric side of Arthur's non-invariant trace formula for a reductive group $G$ defined over $\Q$ continuously to a natural space $\funct(G(\A)^1)$ of test functions
which are not necessarily compactly supported.
The analogous result for the spectral side was obtained in \cite{MR2811597}. The geometric side is decomposed according to the following equivalence relation on $G(\Q)$:
$\gamma_1\sim\gamma_2$ if $\gamma_1$ and
$\gamma_2$ are conjugate in $G(\overline{\Q})$ and their semisimple parts are conjugate in $G(\Q)$. All terms in the
resulting decomposition are continuous linear forms on the space $\funct(G(\A)^1)$, and can be approximated (with continuous error terms) by naively truncated integrals.
\end{abstract}
\author{Tobias Finis}
\address{Universit\"{a}t Leipzig, Mathematisches Institut,
Postfach 100920, D-04009 Leipzig, Germany}
\email{finis@math.uni-leipzig.de}
\author{Erez Lapid}
\address{Department of Mathematics, Weizmann Institute of Science, Rehovot 7610001, Israel}
\thanks{Second named author partially supported by grant \#711733  from the Minerva Stiftung}
\email{erez.m.lapid@gmail.com}
\maketitle

\setcounter{tocdepth}{1}

\tableofcontents

\section{Introduction}
Let $G$ be a reductive group defined over $\Q$ and let $\A=\R\times\A_f$ be the ring of adeles.
As usual, we write $G(\A)^1=\cap\operatorname{Ker}\abs{\chi}_{\A^\times}$, where $\chi$ ranges over the rational characters of $G$.
The original (non-invariant) form of Arthur's trace formula is an identity between two distributions $f \mapsto J(f)$ on $G(\A)^1$,
a geometric one and a spectral one.
The geometric side can be split according to the following equivalence relation on $G(\Q)$:
$\gamma_1\sim\gamma_2$ if $\gamma_1$ and
$\gamma_2$ are conjugate in $G(\overline{\Q})$ and their semisimple parts are conjugate in $G(\Q)$.
In other words, if $\Orb$ is the set of pertinent equivalence classes, then there is a decomposition
\begin{equation} \label{ATF}
J(f)=\sum_{\orb\in\Orb}J_{\orb}(f),\ \ f\in C_c^\infty(G(\A)^1).
\end{equation}
(See \cite{MR835041}. Actually, in [ibid.] a finer equivalence relation is considered, but for our purposes the relation $\sim$ is more suitable.\footnote{In Remark \ref{FinerRelation} below we will consider a slight refinement of the equivalence relation $\sim$ in the case where the derived group of $G$ is not simply connected.})
The distributions $J_{\orb}(f)$ are well understood (as weighted orbital integrals) in the case where $\orb$
is a semisimple conjugacy class of $G(\Q)$. However, they are more mysterious for other classes,
most notably for the unipotent geometric orbits. See \cite{1411.3005, 1310.0541, 1412.8673} for some recent progress on this problem.

For any compact open subgroup $K$ of $G(\A_f)$ the space $G(\A)^1/K$ is a differentiable manifold
(namely a countable disjoint union of copies of $G(\R)^1=G(\R)\cap G(\A)^1$).
Any element $X\in\univ(\Lieg_\infty^1)$ of the universal enveloping algebra of the Lie algebra $\Lieg_\infty^1$
of $G(\R)^1$ defines a left invariant differential operator $f\mapsto f*X$ on $G(\A)^1/K$.
Let $\funct(G(\A)^1;K)$ be the space of smooth right $K$-invariant
functions on $G(\A)^1$ which belong, together with all their derivatives, to $L^1(G(\A)^1)$.
The space $\funct(G(\A)^1;K)$ becomes a Fr\'echet space under the seminorms
\[
\norm{f*X}_{L^1(G(\A)^1)}, \ \ X\in\univ(\Lieg_\infty^1).
\]
We denote by $\funct(G(\A)^1)$ the union of $\funct(G(\A)^1;K)$ as $K$ varies over the compact open subgroups of $G(\A_f)$
and endow $\funct(G(\A)^1)$ with the inductive limit topology.

The purpose of this paper is to show that the geometric side of Arthur's trace formula \eqref{ATF}
extends continuously to the class $\funct(G(\A)^1)$. More precisely, we show that
\[
\sum_{\orb\in\Orb}\abs{J_{\orb}(f)}
\]
extends to a continuous seminorm on $\funct(G(\A)^1)$
(see Corollary \ref{CorollaryContinuity} below). The analogous result for the spectral side was obtained in \cite{MR2811597}, so that the present paper establishes a trace formula for functions in the class $\funct(G(\A)^1)$.

Moreover, we show that the distributions $J_\orb$ can be computed using naive truncation. Namely, using the notation of \S\ref{sec: notation} below, there exist distributions
$f \mapsto J_{\orb}^T(f)$, $\orb\in\Orb$, on $\funct(G(\A)^1)$, which are polynomial functions of the parameter $T \in \aaa_0$,
and satisfy the following approximation property:
for any $K$ there exists a continuous seminorm $\mu$ on $\funct(G(\A)^1;K)$ (depending polynomially on the level of $K$) such that
\[
\sum_{\orb\in\Orb}\abs{\int_{G(\Q)\bs G(\A)^1_{\le T}}\sum_{\gamma\in\orb}f(x^{-1}\gamma x)\ dx-J_{\orb}^T(f)}
\le\mu(f)(1+\norm{T})^re^{-d(T)}
\]
for all $f \in \funct(G(\A)^1;K)$ and
$T\in\aaa_0$ with $d(T):=\min_{\alpha\in\Delta_0}\sprod{\alpha}T>d_0$, where the constants $d_0$ and $r$ are
independent of $f$ and $K$.
The distribution $J_{\orb}(f)$ is obtained by evaluating $J^T_{\orb}(f)$ at a certain distinguished point $T=T_0$
(see \cite[\S1]{MR625344}).

In a previous paper \cite{MR2801400} we proved similar results for the contribution of the semisimple conjugacy classes
of $G(\Q)$. In fact, if we coarsen the relation $\sim$ by only requiring that the semisimple parts are conjugate in $G(\Q)$
(obtaining the so-called coarse classes of Arthur), then the methods of [ibid.], combined with Arthur's basic procedure in \cite{MR518111}, yield the desired result for the coarse geometric expansion, as
will be explained in \S \ref{sec: basic}--\ref{sec: contcoarse} below.
The main result of this part of the paper is Theorem \ref{thm: maincoarse}.
To go further, we use recent work of
Chaudouard--Laumon \cite{MR3427596}, which provides a suitable definition for the modified kernel pertaining to a class of $\sim$.
This definition, which has also been suggested by Hoffmann
\cite{1412.8673}, turns out to be very useful for our purpose.
The continuity of the finer decomposition with respect to $\sim$
is dealt with in \S \ref{sec: modif}--\ref{sec: main}, the main results being Theorem \ref{thm: main} and Corollary
\ref{CorollaryContinuity}.
We remark that our results extend earlier results by Hoffmann in this direction \cite{MR2434856}.
In the Lie algebra case, Chaudouard proved very recently similar results for the space of Schwartz-Bruhat functions \cite{1510.02783}.

One of the main reasons to consider the trace formula on the space $\funct(G(\A)^1)$
is the connection to automorphic $L$-functions (in a suitable right half-plane).
Namely, fixing a model of $G$ over $\Z$, there exists a finite set $S_0 \supset \{ \infty \}$ of places of $\Q$ with the following property. Let $\rho$ be a representation of the $L$-group $^L{G}$ of $G$. Then for all $p \notin S_0$ there exists a unique
bi-$G(\Z_p)$-invariant function
$\phi_{\rho,p,s}$ on $G (\Q_p)$ with
$\tr \pi_p (\phi_{\rho,p,s}) = L_p (\pi_p, \rho, s)$ for all
unramified representations $\pi_p$ of $G(\Q_p)$, where both sides are either considered as formal power series in $p^{-s}$ or  $\Re s$ has to be suitably large.

Let now $S \supset S_0$ be finite set of places of $\Q$, $\phi_p
\in L^1 (G (\Q_p))$ for all $p \in S \setminus \{ \infty \}$ and $\phi_\infty$ be a $C^\infty$-function on $G(\R)$ with
$\norm{f*X}_{L^1(G(\R))} < \infty$ for all $X\in\univ(\Lieg_\infty)$.
Set
\[
\phi_{\rho,s} (g) = \prod_{v \in S} \phi_v (g_v) \prod_{p \notin S}
\phi_{\rho,p,s} (g_p), \quad g = (g_v) \in G (\A).
\]
Then for $\Re s$ large enough (depending on $G$ and $\rho$) the function
\[
f_{\rho,s}  (g) = \int_{S_Z(\R)^\circ} \phi_{\rho,s}  (a g) \ da,
\]
where $S_Z$ is the maximal split torus contained in the center of $G$,
is an element of $\funct(G(\A)^1)$. The contribution of a
discrete automorphic representation $\pi$ of $G (\A)$ to the spectral side
of the trace formula for $f_{\rho,s}$ will be non-zero only if $\pi$ is unramified outside of $S$, and in this case it will be equal to
\[
m (\pi) \prod_{v \in S} \, \tr \pi_v (\phi_v) \, L^S (\pi, \rho, s),
\]
where $m (\pi)$ is the multiplicity of $\pi$ in the discrete spectrum and $L^S (\pi, \rho, s) = \prod_{p \notin S} L_p (\pi_p, \rho, s)$ is the (incomplete) automorphic $L$-function of $\pi$ associated to $\rho$.

A prototype case is $G = \GL (n)$ and $\rho$ the standard representation. In this case one might
more concretely take $\phi$ to be the product of the restriction to $G (\A)$ of a Schwartz-Bruhat function $\Phi$ on the adelic Lie algebra
$\Lieg (\A)$ of $G$ and of the function $\abs{\det}_{\A^\times}^{s+(n-1)/2}$. The resulting function $f_{\rho,s}$ will be an element of
$\funct(G(\A)^1)$ for $\Re s > (n+1)/2$. The contribution of a
discrete automorphic representation $\pi$ can be expressed in terms of the zeta integrals of Godement-Jacquet \cite{MR0342495}, and it is therefore the product of a locally defined entire function of $s$ (which depends on $\Phi$ and $\pi$) and of the completed standard $L$-function of $\pi$ at the point $s$.
This case and its connection to the trace formula for the
Lie algebra have been studied by Jasmin Matz
(see \cite{1308.5394} and work in preparation).


Although this is very far-fetched at this stage, the hope is that using the trace formula for generating functions of the above type
will ultimately provide means to attack Langlands functoriality conjectures beyond the very limited scope (however important) of the current
methods.
This general idea, and its variations were suggested by Langlands in \cite{MR2058622, MR2317447} with some subsequent analysis in
\cite{MR2779866, MR3117742} -- see also \cite{MR3220933} and \cite{1412.6174} for closely related themes.
The humble purpose of the current paper is to provide one of the very first technical steps in this direction.

We are very grateful to Werner Hoffmann for spotting a mistake as well as a number of inaccuracies in an earlier version of this paper and for his suggestion to explicate
the dependence of our estimates on the level of $K$.
We also thank Laurent Clozel and Bao Ch{\^a}u Ng{\^o} for useful discussions.

\section{Notation and preliminaries}

\subsection{} \label{sec: notation}
For the rest of the paper let $G$ be a reductive group defined over $\Q$.
Let $G^{\der}$ be its derived group and $Z_G$ be the center of $G$.
We fix a minimal parabolic subgroup $P_0$ defined over $\Q$ and a Levi decomposition $P_0=M_0\ltimes N_0$ of $P_0$.
Let $S_0$ be the split part of the center of $M_0\cap G^{\der}$ and $X_*(S_0)$ the lattice of co-characters of $S_0$.
Let $A_0$ be the identity component of the topological group $S_0(\R)$ and $\aaa_0=X_*(S_0)\otimes\R$.
We can identify the dual space $\aaa_0^*$ with $X^*(M_0/Z_G)\otimes\R$, where $X^*(M_0/Z_G)$ is the lattice of rational characters of $M_0$ (or $P_0$)
which are trivial on $Z_G$.
We denote the set of simple roots of $S_0$ acting on $N_0$ by $\Delta_0$.
Let $\rho_0\in\aaa_0^*$ be the element corresponding to
$\modulus_0^{1/2}$, where $\modulus_0$ is the modulus function of $P_0$.
We define the homomorphism
\[
\Ht_0:M_0(\A)\rightarrow\aaa_0
\]
by $\sprod{\chi}{\Ht_0(m)}=\log\abs{\chi(m)}_{\A^*}$ for any $m\in M_0(\A)$ and $\chi\in X^*(M_0/Z_G)$, where $\abs{\cdot}_{\A^*}$
is the standard absolute value on $\A^*$.

Except otherwise mentioned, all parabolic subgroups considered are implicitly assumed to be defined over $\Q$.
If $P$ is a standard parabolic subgroup, then we write $P=M_P\ltimes N_P$ (or simply $P=M\ltimes N$, if $P$ is clear from the context) for its standard Levi decomposition.
The set of simple roots of $S_0$ in $N_0\cap M_P$ is denoted by $\Delta_0^P$. It is a subset of $\Delta_0$.
We write $\aaa_P=X_*(S_M)\otimes\R$, where $S_M$ is the split part of the center of $M\cap G^{\der}$, and view $\aaa_P$ as a subspace of $\aaa_0$
whose complement is $\aaa_0^P=X_*(S_0\cap M^{\der})$. Thus, we may view the dual space $\aaa_P^*$ as a subspace of $\aaa_0^*$.
We also write $A_M$ for the identity component of $S_M(\R)$.
We write $\Delta_P$ for the image of $\Delta_0\setminus\Delta_0^P$ under the projection $\aaa_0^*\rightarrow\aaa_P^*$.
More generally, if $Q$ is a parabolic subgroup containing $P$, then we write $\Delta_P^Q$ for the projection of
$\Delta_0^Q\setminus\Delta_0^P$ under $\aaa_0^*\rightarrow\aaa_P^*$.
Similarly, we have the set of coroots $\Delta_0^\vee$ and, more generally, for $Q \supset P$ the set $(\Delta_P^Q)^\vee$ which forms a basis of
$\aaa_P^Q:=\aaa_P\cap\aaa_0^Q$. We denote the basis
of $(\aaa_P^Q)^*$ (resp., $\aaa_P^Q$)
dual to $(\Delta_P^Q)^\vee$ (resp., $\Delta_P^Q$)
by $\hat\Delta_P^Q$ (resp., $(\hat\Delta_P^Q)^\vee$). As usual, we suppress the superscript if $Q=G$.
We write $\tau_P^Q$ and $\hat\tau_P^Q$ for the characteristic functions of the sets
\[
\{X\in\aaa_0:\sprod{\alpha}X>0\text{ for all }\alpha\in\Delta_P^Q\}
\]
and
\[
\{X\in\aaa_0:\sprod{\varpi}X>0\text{ for all }\varpi\in\hat\Delta_P^Q\},
\]
respectively.

We fix a ``good'' maximal compact subgroup $\K = \K_\infty \K_{\fin}$ of $G(\A)$
(i.e., we require $\K$ to be admissible relative to
$M_0$ in the sense of \cite[\S1]{MR625344}). We extend
the left $M_0 (\A)^1$-invariant map
$\Ht_0:M_0(\A)\rightarrow\aaa_0$
to a left $P_0(\A)^1$- and right $\K$-invariant function
\[
\Ht_0:G(\A)\rightarrow\aaa_0.
\]
For $T_1 \in \aaa_0$ let
\[
\siegel_{T_1}=\{x\in G(\A):\tau_0(\Ht_0(x)-T_1)=1\}
\]
and more generally
\[
\siegel_{T_1}^P=\{x\in G(\A):\tau_0^P(\Ht_0(x)-T_1)=1\}
\]
for any $P\supset P_0$.
These sets are then evidently left $P_0(\A)^1$-invariant.
By reduction theory, there exists $T_1\in\aaa_0$ such that
\[
P(\Q)\siegel_{T_1}^P=G(\A)
\]
for all $P\supset P_0$, and in particular for $P=G$.
Thus,
\begin{multline} \label{eq: bndsiegel}
\int_{P(\Q)\bs G(\A)^1}\abs{f(x)}\ dx\le\\\int_{\K}
\int_{N_0(\Q)\bs N_0(\A)}
\int_{A_0} \int_{M_0(\Q)\bs M_0(\A)^1}
\abs{f(uamk)}\tau_0^P(\Ht_0(a)-T_1)\modulus_0(a)^{-1}\ dm\ da\ du\ dk
\end{multline}
for any left $P(\Q)$-invariant measurable function $f$ on $G(\A)^1$.
We fix $T_1$ as above once and for all.

Let
\[
G(\A)^1_{\le T}=\{g\in G(\A)^1:\hat\tau_0(T-\Ht_0(\gamma g))=1\text{ for all }\gamma\in G(\Q)\}.
\]
There exists $d_0>0$ (which depends only on $G$, $P_0$ and
$\K$, and which we may therefore fix once and for all) such that
\[
G(\A)^1_{\le T}\cap\siegel_{T_1}=\{g\in G(\A)^1:\tau_0(\Ht_0(g)-T_1)\hat\tau_0(T-\Ht_0(g))=1\}
\]
provided that $d(T):=\min_{\alpha\in\Delta_0}\sprod{\alpha}T>d_0$.
More generally, for any $P\supset P_0$ let $F^P(x,T)$ be the characteristic function of the set
\[
\{g\in G(\A):\hat\tau_0^P(T-\Ht_0(\gamma g))=1\text{ for all }\gamma\in P(\Q)\}.
\]
By Arthur's partition lemma \cite[Lemma 6.4]{MR518111}, we have
\begin{equation} \label{eq: Arpar}
\sum_{P\supset P_0}\sum_{\gamma\in P(\Q)\bs G(\Q)}F^P(\gamma x,T)\tau_P(\Ht_P(\gamma x)-T)=1, \ \ x\in G(\A),
\end{equation}
provided that $d(T)>d_0$.

Let $W=\operatorname{N}_{G(\Q)}(M_0)/M_0$ be the Weyl group of $G$.
For any $w\in W$ we fix a representative $n_w\in G(\Q)$ (it is determined up to multiplication by an element of $M_0(\Q)$) and set
\begin{equation} \label{def: Q(w)}
Q(w)=\text{the smallest standard parabolic subgroup of $G$ containing $n_w$}.
\end{equation}

\subsection{}
Let $H$ be an algebraic subgroup of $G$ defined over $\Q$.
We denote by $\modulus_H$ the modulus function of the group $H(\A)$.
We will write $\hhh$ for the Lie algebra of $H(\R)$.
(We will retain this typographic convention for other groups.)
We recall the norms
\[
\norm{f}_{H,k}=\sum_i\norm{f*X_i}_{L^1(H(\A))},\ \ \ k\ge0,
\]
where $X_i$ ranges over a fixed basis of $\univ(\hhh)_{\le k}$ with respect to the standard filtration.
We write $\mu_0^H(f)=\norm{f}_{H,\dim H}$.
For any compact open subgroup $K$ of $H(\A_f)$ we consider the Fr\'echet space $\funct(H(\A);K)$ of right $K$-invariant smooth
functions $f$ on $H(\A)$ such that $\norm{f}_{H,k}<\infty$ for all $k$, and define $\funct(H(\A)^1;K)$ and
$\mu_0^{H,1}(f)=\norm{f}_{H(\A)^1,\, \dim H - \operatorname{rk} X^* (H)}$
analogously.

We recall a few useful facts about these norms. (See \cite[\S3]{MR2801400}.
Note that the dependence on $K$ is not explicated in [ibid.], but it is easy to extract it from the argument. Also note that a factor $\modulus_H(h)^{-1}$ is missing on the right-hand side of the second inequality of \cite[Lemma 3.3]{MR2801400}.)

Henceforth, for non-negative quantities $A$ and $B$ we use the notation $A\ll B$ to mean that there exists some constant $c>0$ such that
$A\le cB$. If $c$ depends on some additional parameters (such as $X$) we will write $A\ll_XB$.

\begin{lemma} \label{lem: sob}
\begin{enumerate}
\item \label{part: ltcsum}
For any $f\in\funct(H(\A)^1;K)$ we have
\[
\sum_{\gamma\in H(\Q)}\abs{f(\gamma)}\ll_{H} \vol (K)^{-1} \mu_0^{H,1}(f).
\]
\item \label{part: compact conjugation}
Let $C\subset H(\A)^1$ be a compact set and $\mu$ a continuous seminorm on $\funct(H(\A)^1;K)$.
Then $\sup_{x\in C}\mu(f(\cdot x))$ and $\sup_{x\in C}\mu(f(x^{-1}\cdot x))$
are continuous seminorms on the space $\funct(H(\A)^1;\cap_{x\in C}x^{-1}Kx)$
and $\norm{f(x^{-1}\cdot x)}_{H(\A)^1,k}=\norm{f(\cdot x)}_{H(\A)^1,k}\ll_{C,k}\norm{f}_{H(\A)^1,k}$ for all $f\in\funct(H(\A)^1)$, $k\ge0$, $x\in C$.
\item \label{part: pos}
For any $f\in\funct(G(\A)^1;K)$ there exists $\tilde f\in\funct(G(\A)^1;\K_{\fin})$ such that $\tilde f(x)\ge\abs{f(x)}$
for all $x\in G(\A)^1$, $\tilde f$ is right $\K$-invariant, and $\norm{\tilde f*X}_{L^1(G(\A)^1)}\ll_{X} \vol (K)^{-1} \mu_0^{G,1}(f)$ for any $X\in\univ(\Lieg^1)$.
\item \label{part: consterm}
Let $P$ be a standard parabolic subgroup of $G$.
For any $f\in\funct(G(\A)^1;K)$ define
\begin{equation} \label{eq: f_P}
f_P(m)=\int_{\K}\int_{N(\A)}f(k^{-1}mnk)\ dn \ dk, \ \ m\in M(\A)^1.
\end{equation}
Then $f\mapsto f_P$ is a continuous map from the space $\funct(G(\A)^1;K)$ to the space $\funct(M(\A)^1;\cap_{k\in\K}kKk^{-1}\cap M(\A))$.
\end{enumerate}
\end{lemma}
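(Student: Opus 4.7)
My plan is to derive all four parts from Sobolev-type estimates on the manifold $H(\A)^1/K$, which decomposes as a countable disjoint union of copies of the Lie group $H(\R)^1$. Archimedean Sobolev embedding combined with the right $K$-invariance of $f$ then transfers to the adelic setting, with the factor $\vol(K)^{-1}$ in the first and third parts originating from averaging against $K$ to generate an $L^1$-integral over the finite places.

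For the first part, I fix a relatively compact symmetric identity neighborhood $\Omega_\infty\subset H(\R)^1$, small enough that the translates $\gamma(\Omega_\infty\times K)$ for $\gamma\in H(\Q)$ are pairwise disjoint (using discreteness of $H(\Q)$ in $H(\A)^1$). A standard Sobolev embedding on $H(\R)^1$ gives $\abs{f(g)}\ll\sum_i\int_{\Omega_\infty}\abs{(f*X_i)(g\omega)}\,d\omega$, with $X_i$ ranging over a basis of $\univ(\hhh^1)$ up to degree $\dim H(\R)^1$. Using right $K$-invariance to rewrite $\abs{f*X_i(g\omega)}$ as an average over $K$ and summing over $\gamma$ with the disjointness produces the claimed bound.

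The second part rests on the commutation relation $(f(\cdot x))*X=(f*\Ad(x^{-1})X)(\cdot x)$ for right translation and its analogue for conjugation. Since $x$ varies over a compact set $C$, $\Ad(x^{-1})$ acts on $\univ(\hhh)_{\le k}$ with coefficients uniformly bounded in $x$; combined with right-invariance of the Haar measure on $H(\A)^1$, this yields $\norm{f(\cdot x)*X}_{L^1}\ll_{C,k}\norm{f}_{H(\A)^1,k}$ uniformly. The shift of $K$-invariance to $\cap_{x\in C}x^{-1}Kx$ is automatic from the finite-place action. For the fourth part, differentiation of $f_P$ by elements $Y\in\univ(\mathfrak{m}^1)$ reduces, after a change of variables in the nilpotent integration, to integrals of $f$ convolved with $\Ad(k^{-1})$-twists of derivations (up to a modulus factor), and Fubini with uniform boundedness of $\Ad(k^{-1})Y$ for $k\in\K$ delivers the continuity of $f\mapsto f_P$.

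The third part is the main technical step, and I expect it to be the principal obstacle. A direct convolution $\abs{f}*\phi$ with a smooth bump fails to be smooth because $\abs{f}$ is not, and need not majorize $\abs{f}$ pointwise. My plan is to combine Sobolev embedding with smoothing: from the proof of the first part one obtains the pointwise bound $\abs{f(g)}\ll\vol(K)^{-1}\sum_i\int\abs{(f*X_i)(h)}\chi_{\Omega_\infty\times K}(g^{-1}h)\,dh$, in which I replace the characteristic function by a non-negative right $\K$-invariant smooth function $\psi$ on $G(\A)^1$ dominating it. Then $\tilde f(g)=C\vol(K)^{-1}\sum_i\int\abs{(f*X_i)(h)}\psi(g^{-1}h)\,dh$ is smooth (differentiation in $g$ passes to $\psi(g^{-1}h)$), non-negative, right $\K$-invariant, and majorizes $\abs{f}$. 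Fubini applied to $\tilde f*X$ yields $\norm{\tilde f*X}_{L^1}\ll\vol(K)^{-1}\mu_0^{G,1}(f)\cdot\norm{X\psi}_{L^1}$, completing the estimate.
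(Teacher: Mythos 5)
The paper does not reprove this lemma; it cites \cite[\S3]{MR2801400} and only adds the remark that the $K$-dependence can be extracted from the argument there. Your proposal reconstructs essentially that argument: archimedean $L^1$-Sobolev embedding on $H(\R)^1$ combined with right $K$-invariance to absorb a factor $\vol(K)^{-1}$ for parts \ref{part: ltcsum} and \ref{part: pos}, the $\Ad(x^{-1})$-commutation for part \ref{part: compact conjugation}, and a change of variables in the nilpotent integral plus uniform boundedness of $\Ad(k^{-1})$ over $\K$ for part \ref{part: consterm}. The overall structure and the key mechanisms are right.

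Two small points worth fixing. First, in part \ref{part: pos} the auxiliary bump $\psi$ in $\tilde f(g)=C\vol(K)^{-1}\sum_i\int\abs{(f*X_i)(h)}\psi(g^{-1}h)\,dh$ must be \emph{left} $\K$-invariant (and dominate the indicator of $\Omega_\infty\times K$), not right $\K$-invariant as you wrote: right $\K$-invariance of $\tilde f$ comes from $\psi((gk_0)^{-1}h)=\psi(k_0^{-1}g^{-1}h)=\psi(g^{-1}h)$. Second, in part \ref{part: ltcsum} you should make explicit that $\Omega_\infty$ can be chosen independently of $K$ (for $K$ ranging over the compact open subgroups actually occurring, e.g.\ $K\subset\K_\fin\cap H(\A_f)$), since otherwise the Sobolev constant would acquire a hidden $K$-dependence and the stated $\ll_H\vol(K)^{-1}$ would be weakened. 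With $\Omega_\infty$ fixed once and for all so that $H(\Q)\cap(\Omega_\infty\Omega_\infty^{-1}\times\K_\fin)=\{1\}$, the disjointness of the sets $\gamma(\Omega_\infty\times K)$ holds simultaneously for all such $K$, and the constant is genuinely uniform.
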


\subsection{} \label{subsectionlevel}
We fix a faithful $\Q$-rational representation $r_0: G \to \GL (N_0)$ such that $\K_{\fin} = \{ g \in G (\A_f) \, : \, r_0 (g)
\in \GL (N_0, \hat{\Z}) \}$.
For any positive integer $N$ let
\[
\K (N) =\{ g \in G (\A_f) \, : \, r_0 (g) \equiv 1 \pmod{N} \}
\]
be the principal congruence subgroup of level $N$, a factorizable normal open subgroup of $\K_{\fin}$.
The groups $\K (N)$ form a neighborhood base of the identity element in $G(\A_f)$.

Throughout the paper $K$ denotes a compact open subgroup of $G(\A_f)$. The level of $K$ is defined as the smallest positive integer $N$ with $\K (N) \subset K$. We denote it by $\level (K)$.

\section{An estimate for truncated integrals} \label{sec: basic}

\subsection{} In this section, we prove a slight variant of the main result of \cite{MR2801400}, which is basic for all following estimates.
For any parabolic subgroup $P$ of $G$ (defined over $\Q$) we
define
\[
G(\Q)\rg{P}=G(\Q)\setminus\cup_{P\subset P'\subsetneq G}P'(\Q).
\]
The set $G(\Q)\rg{P}$ is bi-$P(\Q)$-invariant and $G(\Q)\rg{G}=G(\Q)$.
For standard parabolic subgroups $P\subset Q$ we set
\begin{equation} \label{def: xi}
\smofrts PQ=\sum_{\alpha\in\Delta_0^Q\setminus\Delta_0^P}\alpha\in\aaa_0^*.
\end{equation}

\begin{theorem} \label{thm: analog}
There exist an integer $r\ge0$, depending only on $G$,
and a continuous seminorm $\mu$ on $\funct(G(\A)^1;K)$, such that
for any standard parabolic subgroup $P$ of $G$ and any $l\ge0$ we have
\begin{multline} \label{eq: mainbnd11}
\int_{P(\Q)\bs G(\A)^1}F^P(g,T)\tau_P(\Ht_P(g)-T)\norm{\Ht_P(g)-T_P}^l\sum_{\gamma\in G(\Q)\rg{P}}
\abs{f(g^{-1}\gamma g)}\ dg\\\ll_{l}(1+\norm{T})^re^{-\sprod{\smofrts P{}}T}\mu(f)
\end{multline}
for any $f\in\funct(G(\A)^1;K)$ and any $T \in \aaa_0$ such that $\sprod{\alpha}{T-T_1} \ge 0$
for all $\alpha \in \Delta_0$.
Moreover, we can take $\mu = c \vol (K)^{-1} \mu_0^{G,1}$ with a constant $c$ that does not depend on $K$.
\end{theorem}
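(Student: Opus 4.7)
The plan is to follow the proof of the main result of \cite{MR2801400} essentially verbatim, with the sole novelty being to make explicit how the compact open subgroup $K$ enters the estimate; it does so only through the pointwise sum bound of Lemma \ref{lem: sob}\eqref{part: ltcsum}, as a factor $\vol(K)^{-1}$, accounting for the announced form $\mu = c \vol(K)^{-1} \mu_0^{G,1}$ with $c$ independent of $K$.

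The first step is to apply reduction theory \eqref{eq: bndsiegel} to dominate the integral over $P(\Q)\bs G(\A)^1$ by one over a Siegel domain attached to $P$, using the decomposition $g = uamk$ with $u \in N_0(\A)$ modulo $N_0(\Q)$, $a \in A_0$ satisfying $\tau_0^P(\Ht_0(a) - T_1) = 1$, $m \in M_0(\A)^1$ in a fixed fundamental domain, and $k \in \K$. Under these coordinates, the truncation $F^P(g,T)\tau_P(\Ht_P(g)-T) = 1$ constrains the $\aaa_0^P$-component of $\Ht_0(a)$ to a bounded slab determined by $T_1$ and $T$ (using the hypothesis $\sprod{\alpha}{T-T_1} \ge 0$ for $\alpha \in \Delta_0$), while $\Ht_P(a) - T_P$ is forced into the $\Delta_P$-positive cone.

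For the inner sum over $\gamma \in G(\Q)\rg{P}$, conjugation by $a$ acts on the entries of $r_0(u^{-1}\gamma u)$ via characters $a^{-\alpha}$ with $\alpha$ running over the weights of $A_0$ on $\Lieg$. The defining property that elements of $G(\Q)\rg{P}$ cannot be absorbed into any proper standard parabolic containing $P$, combined with the integrality of $r_0(\gamma)$, guarantees that each simple root $\alpha \in \Delta_0 \setminus \Delta_0^P$ witnesses a nontrivial exponential gain $a^{-c_\alpha \alpha}$ with $c_\alpha$ a positive integer on some matrix entry of $a^{-1}r_0(u^{-1}\gamma u)a$. A Bruhat-type stratification of the sum (using the Weyl element representatives $n_w$ from \eqref{def: Q(w)}) collects these gains into a uniform factor $e^{-\sprod{\smofrts P{}}{\Ht_0(a)}}$; combined with the Siegel constraints on $a$ this integrates to $e^{-\sprod{\smofrts P{}}{T}}$, while the polynomial weight $\norm{\Ht_P(g) - T_P}^l$ contributes an additional factor $(1+\norm{T})^r$ with $r$ depending only on $\dim \aaa_0$ (the $l$-dependence being absorbed into the constant).

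Finally I invoke Lemma \ref{lem: sob}\eqref{part: compact conjugation} to absorb the conjugation by the compact variables $u, m, k$ into a continuous seminorm on $\funct(G(\A)^1; K')$ for a $K$-uniform conjugate $K'$, and Lemma \ref{lem: sob}\eqref{part: ltcsum} (applied to a smooth majorant $\tilde f$ produced by Lemma \ref{lem: sob}\eqref{part: pos}) to bound the remaining counting sum by $\vol(K)^{-1}\mu_0^{G,1}(f)$. The principal obstacle, inherited from \cite{MR2801400}, is the verification that the exponential gain is genuinely indexed by the \emph{full} set $\Delta_0 \setminus \Delta_0^P$ simultaneously, not merely by a nonempty subset: this is a delicate root-by-root analysis that exploits both the bi-$P(\Q)$-invariance of $G(\Q)\rg{P}$ and the uniform lower bound (namely $\ge 1$) on the absolute value of nonzero integer entries of the relevant coset representatives.
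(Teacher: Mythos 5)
Your proposal describes a different mechanism from the one the paper actually uses, and it has a genuine gap at the crucial step. The paper's proof of Theorem \ref{thm: analog} does \emph{not} rest on an integrality/matrix-entry estimate. After the reductions that you correctly outline (Siegel domain via \eqref{eq: bndsiegel}, Bruhat stratification of $G(\Q)\rg{P}$ into cells $N_0(\Q)n_wP_0(\Q)$ with $Q(w)P=G$, conversion of the sums over $N_0(\Q)$ and $M_0(\Q)$ to integrals via Lemma \ref{lem: latticesum} and Lemma \ref{lem: sob}\eqref{part: ltcsum}), the whole weight of the argument is carried by Proposition \ref{prop: modprop51}, which is proved by Mellin transform. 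One writes the resulting $A_0$-integral as a contour integral of $\phi_{T,P,l}(\lambda)\, m(w^{-1},\lambda)\,\varphi(\lambda)$ where $m(w^{-1},\lambda)$ is the spherical intertwining operator, applies the effective continuity statement of Proposition \ref{prop: eff 3.1}, and invokes Lemma \ref{LemmaConeIntegral} for the explicit estimate of $\phi_{T,P,l}$ and its derivatives. The exponential factor $e^{-\sprod{\smofrts P{}}{T}}$ emerges from the computation of $\phi_{T,P,l}$ via the identity $\lambda_0 - w^{-1}\lambda_0 = \sum_{\alpha\in\Delta_0^{Q(w)}}c_\alpha\alpha$ with $c_\alpha>0$ together with the set-theoretic fact $\Delta_0^{Q(w)}\cup\Delta_0^P=\Delta_0$ (which encodes $Q(w)P=G$). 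This is a Weyl-group/root-system statement, not an arithmetic one.

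Your "integrality of $r_0(\gamma)$" argument — extracting a gain $a^{-c_\alpha\alpha}$ from each $\alpha\in\Delta_0\setminus\Delta_0^P$ via a nonzero matrix entry — is Arthur's classical device for compactly supported test functions, where it can be combined with a pointwise count. For $f\in\funct(G(\A)^1)$ it does not readily apply: the needed bound must be an $L^1$-Sobolev seminorm, not $\sup|f|\cdot\#\{\gamma\}$, and the entries of $r_0(u^{-1}\gamma u)$ are adelic (not integers), so the lower bound $\ge 1$ is not immediate. You flag the decisive difficulty yourself — "the verification that the exponential gain is genuinely indexed by the \emph{full} set $\Delta_0\setminus\Delta_0^P$ simultaneously" — and then defer it to an unspecified "delicate root-by-root analysis." That deferred analysis is exactly where the paper's Mellin machinery (Proposition \ref{prop: eff 3.1}, the role of $m(w^{-1},\lambda)$ in holomorphically continuing past the singular hyperplanes, and the uniform-in-$T$ bound of Lemma \ref{LemmaConeIntegral}) does the real work; without an actual argument in its place, your sketch is incomplete. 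Two further points: the $\vol(K)^{-1}$ factor enters the paper's proof not only through Lemma \ref{lem: sob}\eqref{part: ltcsum} but also through Lemma \ref{lem: latticesum} and Lemma \ref{lem: sob}\eqref{part: pos}; and the uniform $T$-dependence (the polynomial $(1+\norm{T})^r$ and the explicit exponent $\smofrts P{}$), which is the new content of Theorem \ref{thm: analog} relative to \cite{MR2801400}, is precisely what Lemma \ref{LemmaConeIntegral} is designed to deliver and cannot be dispatched by absorbing $l$ into a constant as you suggest.
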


For $P = P_0$, $T=T_1$ and $l=0$ this specializes to one of the main intermediate results of \cite{MR2801400}
(which implies immediately the continuity of the regular elliptic contribution to the trace formula).
For $P=G$ we obtain that
\begin{multline} \label{eq: P=G}
f\mapsto\sup_{T:d(T)>d_0}(1+\norm{T})^{-r}\int_{G(\Q)\bs G(\A)^1_{\le T}}\sum_{\gamma\in G(\Q)}\abs{f(g^{-1}\gamma g)}\ dg\\
\text{is a continuous seminorm on $\funct(G(\A)^1)$.}
\end{multline}
(A more careful analysis shows that we can in fact take $r = \dim \aaa_0$.)

In the remainder of this section we will prove Theorem \ref{thm: analog}.
The proof follows the argument of \cite{MR2801400} closely, but on the one hand it is possible to simplify the argument (cf.~[ibid., Remark 3]),
and on the other hand we need to keep track of the dependence on $T$.

As in [ibid.], the main intermediate step is an estimate for truncated integrals over the Bruhat cells of Weyl group elements $w \in W$ with $Q(w)P=G$. We state the necessary
generalization of [ibid., Proposition 5.1] now, and postpone the proof to \S \ref{ProofProposition} below.

\begin{proposition} \label{prop: modprop51}
Let $P$ be a standard parabolic subgroup of $G$ and $w\in W$
with $Q(w)P=G$.
There exist an integer $r\ge0$ and
a continuous seminorm $\mu$ on $\funct(G(\A)^1;K)$ such that for any $l\ge0$ we have
\begin{multline*}
\int_{N_0(\A)/N_w(\A)}\int_{N_0(\A)}\int_{A_0}\int_{M_0(\A)^1}\abs{f(a^{-1}u_2n_wau_1m)}\chi_{T,P,l}(a)\ dm\ da\ du_1\ du_2\\\ll_{K,l}
\mu(f)(1+\norm{T})^re^{-\sprod{\smofrts P{}}T},
\end{multline*}
where $N_w=N_0\cap n_wN_0n_w^{-1}$ and
\[
\chi_{T,P,l}(a)=\tau_P(\Ht_P(a)-T)\tau_0^P(\Ht_0(a)-T_1)\hat\tau_0^P(T-\Ht_0(a))\sum_{\alpha\in\Delta_P}\sprod{\alpha}{\Ht_P(a)-T}^l.
\]
\end{proposition}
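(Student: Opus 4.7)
The plan is to follow the proof of \cite[Proposition 5.1]{MR2801400}, which handles the special case $T=T_1$, $l=0$, tracking the dependence on $T$ and incorporating the polynomial weight. By Lemma \ref{lem: sob} part \ref{part: pos} we may replace $f$ by a non-negative majorant and assume $f\ge 0$. In the iterated integral, substitute $u_2\mapsto au_2a^{-1}$ in the integration over $N_0(\A)/N_w(\A)$; the Jacobian is $e^{\sprod{2\rho_w^+}{\Ht_0(a)}}$, where $2\rho_w^+$ denotes the sum of the positive roots $\alpha$ with $w^{-1}\alpha<0$ (i.e., those contributing to $\Lie N_0/\Lie N_w$). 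The integrand becomes $f(u_2\cdot a^{-1}n_wa\cdot u_1m)$, and one checks that $a^{-1}n_wa\in n_wA_0$. After standard further substitutions and Fubini, the integrals over $u_1$, $u_2$, and $m$ are bounded by a continuous seminorm of $f$ (depending on $K$) via Lemma \ref{lem: sob} parts \ref{part: compact conjugation} and \ref{part: consterm}.

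The task reduces to estimating the $A_0$-integral
\[
\int_{A_0} e^{\sprod{2\rho_w^+-2\rho_0}{\Ht_0(a)}}\chi_{T,P,l}(a)\,da,
\]
where the $-2\rho_0$ accounts for the modulus factor $\modulus_0(a)^{-1}$. Decompose $\Ht_0(a) = H+Z$ with $H\in\aaa_0^P$ and $Z=\Ht_P(a)\in\aaa_P$. The constraints $\tau_0^P(H-T_1)\hat\tau_0^P(T-H)=1$ confine $H$ to a box of volume $O((1+\norm{T})^{\dim\aaa_0^P})$, yielding the polynomial factor $(1+\norm{T})^r$ with $r=\dim\aaa_0^P$. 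On this box the $H$-exponential stays bounded, since the restriction of $2\rho_w^+-2\rho_0$ to $\aaa_0^P$ is a non-positive combination of $\Delta_0^P$, as in the original argument.

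The main obstacle is the $Z$-integral. Let $c\in\aaa_P^*$ denote the projection of $2\rho_0-2\rho_w^+$. Substituting $Z'=Z-T$ gives
\[
e^{-\sprod{c}{T}}\int_{\{Z'\in\aaa_P\,:\,\sprod{\alpha}{Z'}>0,\,\alpha\in\Delta_P\}} e^{-\sprod{c}{Z'}}\sum_{\alpha\in\Delta_P}\sprod{\alpha}{Z'}^l\,dZ'.
\]
The key combinatorial input, analogous to \cite[Lemma 5.2]{MR2801400}, is that the hypothesis $Q(w)P=G$ forces $\sprod{c}{\varpi^\vee}\ge 1$ for every $\varpi\in\hat\Delta_P$, equivalently that $c-\smofrts P{}$ lies in the cone spanned by $\hat\Delta_P$. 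Granted this, the $Z'$-integral factorizes (in the basis $(\hat\Delta_P)^\vee$) as a product of one-dimensional integrals of the form $\int_0^\infty t^l e^{-\sprod{c}{\varpi^\vee}t}\,dt$, each bounded by an $l$-dependent constant. The prefactor satisfies $e^{-\sprod{c}{T}}\ll e^{-\sprod{\smofrts P{}}{T}}$ by virtue of the admissibility $\sprod{\alpha}{T-T_1}\ge 0$ ($\alpha\in\Delta_0$) together with the standard expression of fundamental weights as non-negative combinations of simple roots. Combining the contributions yields the asserted bound.
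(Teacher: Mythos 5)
Your proposed route is a direct estimation rather than the paper's Mellin/Fourier-transform argument, which would indeed be a more elementary approach if it worked, but as written there is a genuine gap. After the substitution $u_2\mapsto au_2a^{-1}$, the argument of $f$ is $u_2\,n_w\,b(a)\,u_1\,m$ with $b(a)=w^{-1}(a)^{-1}a\in A_0$, and $b(a)$ still depends on $a$. The step ``after standard further substitutions and Fubini, the integrals over $u_1$, $u_2$, $m$ are bounded by a continuous seminorm of $f$'' is not justified: conjugating $u_1$ by $b(a)$ to disentangle the $a$-dependence introduces the Jacobian $e^{\sprod{2\rho_0}{\Ht_0(a)-w^{-1}\Ht_0(a)}}$, the map $a\mapsto b(a)$ is not a bijection of $A_0$ (its image has dimension $\dim\aaa_0-\dim\aaa_0^w$), and the remaining $(u_2,p)$-integral is an integral over the Bruhat cell $N_0n_wP_0$ whose behavior in $b$ is precisely governed by the intertwining operator $m(w^{-1},\lambda)$ — this is not captured by Lemma \ref{lem: sob} parts \ref{part: compact conjugation} and \ref{part: consterm}, neither of which concerns conjugation by an unbounded element of $A_0$. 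The paper handles exactly this interplay by writing the integral as a contour integral $\int_{\Re\lambda=\lambda_0}\phi_{T,P,l}(\lambda)m(w^{-1},\lambda)\varphi(\lambda)\,d\lambda$ and invoking Proposition \ref{prop: eff 3.1} with $S=\{\alpha^\vee\in\Delta_0^\vee: w^{-1}\alpha<0\}$.

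Concretely, the exponent in your reduced $A_0$-integral is incorrect, and this is easy to see in the case $G=\GL(2)$, $w$ the nontrivial Weyl element, $P=P_0$. Your exponent $2\rho_w^+-2\rho_0$ equals $\alpha-\alpha=0$, so the claimed $A_0$-integral is $\int_{A_0}\chi_{T,P_0,l}(a)\,da$, which is supported on $\{\Ht_0(a)\in T+\aaa_0^+\}$ and \emph{diverges}. The correct exponent (the one that appears in Lemma \ref{LemmaConeIntegral}) is $w^{-1}\rho_0-\rho_0$, which in this example equals $-\alpha$ and gives a convergent integral. The two expressions $2\rho_w^+-2\rho_0=-\sum_{\alpha>0: w^{-1}\alpha>0}\alpha$ and $w^{-1}\rho_0-\rho_0=-\sum_{\alpha>0: w\alpha<0}\alpha$ do not agree in general. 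Relatedly, the $-2\rho_0$ you attribute to ``the modulus factor $\modulus_0(a)^{-1}$'' has no source: the iterated integral in the statement of Proposition \ref{prop: modprop51} carries no modulus factor (that factor appears only in the Siegel-domain step \eqref{eq: bndsiegel} in the proof of Theorem \ref{thm: analog}, and has already been absorbed by the time one reaches this proposition). As a further consequence, your final positivity claim $\sprod{c}{\varpi^\vee}\ge 1$ for $\varpi^\vee\in\hat\Delta_P^\vee$ fails with your $c$; it is the correct exponent for which \cite[Lemma 2.2]{MR2801400} gives the positivity.
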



As usual, we also need to estimate sums over the unipotent radicals of standard parabolic subgroups by integrals.

\begin{lemma} \label{lem: latticesum}
Let $P=M\ltimes N$ be a standard parabolic subgroup of $G$.
Then there exist $X_1,\dots,X_m\in\univ(\nnn)$ such that for any compact open subgroup $K'$ of $N(\A_f)$ we have
\[
\sum_{n\in N(\Q)}\abs{f(a^{-1}nu a)}\le \vol (K')^{-1} \sum_i\int_{N(\A)}\abs{(f*X_i)(a^{-1}na)}\ dn
\]
for any $f\in\funct(N(\A);K')$, $u\in N(\A)$ and $a\in A_0$ such that $\tau_0(\Ht_0(a)-T_1)=1$.
\end{lemma}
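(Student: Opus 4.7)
The plan is to apply part (1) of Lemma \ref{lem: sob} to the translated function $g(n):=f(a^{-1}nua)$ on $N(\A)$, and to rewrite the resulting estimate in the desired form. First I observe that the sum $\sum_{n\in N(\Q)}|f(a^{-1}nua)|$ is unchanged by the substitution $u\mapsto\gamma u$ for $\gamma\in N(\Q)$ (via $n\mapsto n\gamma$). Since $N(\Q)\bs N(\A)$ is compact, I may assume that $u$ lies in a fixed compact fundamental domain $\mathcal{F}\subset N(\A)$ for the left $N(\Q)$-action; this is what allows a bound that is uniform in $u$.

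Since $a\in G(\R)$ commutes with $N(\A_f)$, one checks that $g$ is right $uK'u^{-1}$-invariant; and since $N$ is unipotent, $X^*(N)=0$ and $N(\A)^1=N(\A)$. Part (1) of Lemma \ref{lem: sob} applied with $H=N$ therefore gives
\[
\sum_{n\in N(\Q)}|g(n)|\ll\vol(K')^{-1}\sum_i\|g*X_i\|_{L^1(N(\A))},
\]
where $\{X_i\}$ is a fixed basis of $\univ(\nnn)_{\le\dim N}$. A direct manipulation using $n\exp(tX)u=nu\cdot\exp(t\Ad(u_\infty^{-1})X)$ followed by conjugation by $a$ gives $(g*X)(n)=(f*\Ad(a^{-1}u_\infty^{-1})X)(a^{-1}nua)$ for $X\in\nnn$, and this extends to $\univ(\nnn)$ since $\Ad(a^{-1}u_\infty^{-1})$ is an algebra automorphism. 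Changing variable $m=a^{-1}nua$ (Jacobian $e^{\sprod{2\rho_N}{\Ht_0(a)}}$) yields
\[
\|g*X\|_{L^1(N(\A))}=\int_{N(\A)}|(f*\Ad(a^{-1}u_\infty^{-1})X)(a^{-1}na)|\,dn.
\]

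The main technical step is a uniform bound on the adjoint action: expanding $\Ad(a^{-1}u_\infty^{-1})X_i=\sum_j c_{ij}(a,u_\infty)Y_j$ in a fixed basis $\{Y_j\}$ of $\univ(\nnn)_{\le\dim N}$, the coefficients $c_{ij}(a,u_\infty)$ are bounded uniformly for $u\in\mathcal{F}$ and $a$ with $\tau_0(\Ht_0(a)-T_1)=1$. This rests on two points: (i) $\Ad(u_\infty^{-1})$ is a polynomial of bounded degree in $\log u_\infty$ (as $\ad$ is nilpotent on $\nnn$), hence bounded on the compact image of $\mathcal{F}$ in $N(\R)$; and (ii) $\Ad(a^{-1})$ acts diagonally on weight spaces, and since every weight of $\univ(\nnn)_{\le\dim N}$ is a non-negative combination of the simple roots in $N$, the eigenvalues $e^{-\sprod{\beta}{\Ht_0(a)}}$ are uniformly bounded under the positivity hypothesis on $a$ (the bound depending only on $T_1$). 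Combining everything gives $\sum_n|f(a^{-1}nua)|\ll\vol(K')^{-1}\sum_j\int_{N(\A)}|(f*Y_j)(a^{-1}na)|\,dn$, and absorbing the implicit constant into the basis elements yields the lemma. The main obstacle is the uniform adjoint bound, which is handled precisely by the combination of the fundamental-domain reduction for $u$ and the positivity hypothesis on $a$.
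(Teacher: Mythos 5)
Your argument is correct and amounts to the same proof as in the paper: the paper handles $u$ by first establishing the case $a=1$ via Lemma \ref{lem: sob}, parts \ref{part: ltcsum} and \ref{part: compact conjugation} (with $u$ taken in a compact set), and then passes to general $a$ via the weight decomposition $\Ad(a)X_i=\chi_i(a)X_i$ on a weight basis of $\univ(\nnn)_{\le\dim\nnn}$ and the bound $\abs{\chi_i(a)}^{-1}\ll 1$ under the hypothesis $\tau_0(\Ht_0(a)-T_1)=1$; your one-step treatment via a uniform bound on $\Ad(a^{-1}u_\infty^{-1})$ packages these two observations together, with your point (i) doing the work of part \ref{part: compact conjugation} of Lemma \ref{lem: sob} and your point (ii) being the same positive-root eigenvalue estimate. (One small presentational slip: no Jacobian is needed when passing from $\int_{N(\A)}\abs{\cdots(a^{-1}nua)}\,dn$ to $\int_{N(\A)}\abs{\cdots(a^{-1}na)}\,dn$ -- a right translation $n\mapsto nu^{-1}$ suffices -- but your final displayed identity is correct.)
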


\begin{proof}
The special case $a=1$ follows immediately from Lemma \ref{lem: sob}, parts \ref{part: ltcsum} and \ref{part: compact conjugation},
since $u$ can be taken in a compact set.

Moreover, we can take the differential operators $X_i$ to be a basis for $\univ(\nnn)_{\le\dim\nnn}$ with $\Ad(a)X_i=\chi_i(a)X_i$
for $a\in A_0$, where each $\chi_i$ is a character of $A_0$ which is a sum of positive roots.
We therefore have $\abs{\chi_i(a)}^{-1}\ll 1$ for $\tau_0(\Ht_0(a)-T_1)=1$.
The lemma follows, since the function $f_a=f(a^{-1}\cdot a)$
on $N (\A)$ satisfies
\[
f_a*X_i(n)=[f*\Ad(a^{-1})X_i](a^{-1}na)
= \chi_i(a)^{-1} (f*X_i) (a^{-1} na),
\ \ n\in N(\A).\qedhere
\]
\end{proof}

\begin{proof}[Proof of Theorem \ref{thm: analog}]
Using \eqref{eq: bndsiegel}, we first estimate the left-hand side of \eqref{eq: mainbnd11} by a constant multiple (which depends only on $l$) of
\[
\int_{\K}\int_{N_0(\Q)\bs N_0(\A)}\int_{A_0}\int_{M_0(\Q)\bs M_0(\A)^1}
\sum_{\gamma\in G(\Q)\rg{P}}\abs{f((uamk)^{-1}\gamma uamk)}\chi(a)\modulus_0(a)^{-1}\ dm\ da\ du\ dk,
\]
where for convenience we write $\chi(a)=\chi_{T,P,l}(a)$.
Note that $\chi(a)$ is non-negative, and that under our assumption on $T$ the argument of \cite[pp. 943--944]{MR518111} shows that
\begin{equation} \label{supportchi}
\chi (a) > 0 \text{ implies } \tau_0(\Ht_0(a)-T_1)=1.
\end{equation}

Since $m$ and $k$ are integrated over compact sets, we can use Lemma \ref{lem: sob}, part \ref{part: compact conjugation},
to reduce to bounding
\[
\int_{N_0(\Q)\bs N_0(\A)}\int_{A_0}\sum_{\gamma\in G(\Q)\rg{P}}\abs{f(a^{-1}u^{-1}\gamma ua)}\chi(a)\modulus_0(a)^{-1}\ da\ du.
\]
Recall that $G(\Q)\rg{P}$ is bi-$P(\Q)$-invariant
and therefore a union of Bruhat cells.
In fact,
\[
G(\Q)\rg{P}=\bigcup_{w\in W: \, Q(w)P = G}
N_0(\Q)n_wP_0(\Q).
\]
Thus, we need to consider
\[
\int_{N_0(\Q)\bs N_0(\A)}\int_{A_0}\sum_{\gamma
\in N_0(\Q)n_wP_0(\Q)}
\abs{f(a^{-1}u^{-1}\gamma ua)}\chi(a)\modulus_0(a)^{-1}\ da\ du
\]
for any $w \in W$ with $Q(w)P = G$. We write this as
\[
\int_{N_0(\Q)\bs N_0(\A)}\int_{A_0}\sum_{u_2\in N_w(\Q)\bs N_0(\Q)}\sum_{m\in M_0(\Q)}\sum_{u_1\in N_0(\Q)}
\abs{f(a^{-1}u^{-1}u_2^{-1}mn_wu_1ua)}\chi(a)\modulus_0(a)^{-1}\ da\ du.
\]
Using \eqref{supportchi}, we can apply
Lemma \ref{lem: latticesum} and estimate the sum over $u_1$ by
the integrals of the functions $f*X$, $X$ ranging over a fixed finite set of differential operators.
Replacing $f$ by one of these derivatives, we can reduce to
\[
\int_{N_0(\Q)\bs N_0(\A)}\int_{A_0}\int_{N_0(\A)}\sum_{u_2\in N_w(\Q)\bs N_0(\Q)}\sum_{m\in M_0(\Q)}
\abs{f(a^{-1}u^{-1}u_2^{-1}mn_wau_1)}\chi(a)\ du_1\ da\ du,
\]
i.e., to
\[
\int_{N_w(\Q)\bs N_0(\A)}\int_{A_0}\int_{N_0(\A)}\sum_{m\in M_0(\Q)}\abs{f(a^{-1}u^{-1}mn_wau_1)}\chi(a)\ du_1\ da\ du.
\]
Note that as a function of $u$, the inner integral is left $N_w(\A)$-invariant, and hence we get
\[
\int_{N_w(\A)\bs N_0(\A)}\int_{A_0}\int_{N_0(\A)}\sum_{m\in M_0(\Q)}\abs{f(a^{-1}u^{-1}mn_wau_1)}\chi(a)\ du_1\ da\ du,
\]
which is also
\[
\int_{N_w(\A)\bs N_0(\A)}\int_{A_0}\int_{N_0(\A)}\sum_{m\in M_0(\Q)}\abs{f(a^{-1}u^{-1}n_wau_1m)}\chi(a)\ du_1\ da\ du.
\]
Finally, using Lemma \ref{lem: sob}, part \ref{part: ltcsum}, we reduce to
\[
\int_{N_w(\A)\bs N_0(\A)}\int_{A_0}\int_{N_0(\A)}\int_{M_0(\A)^1}\abs{f(a^{-1}u^{-1}n_wau_1m)}\chi(a)\ dm\ du_1\ da\ du,
\]
which is continuous by Proposition \ref{prop: modprop51}. The assertion about
$\mu$ follows directly from Lemma \ref{lem: sob}, part \ref{part: pos}.
\end{proof}

\subsection{} \label{ProofProposition}
It remains to prove Proposition \ref{prop: modprop51}. Since the argument is very similar to the proof of
\cite[Proposition 5.1]{MR2801400}, we refer the reader to the earlier paper and
only give the parts of the argument of [ibid.] that need to be modified.
We remark that the case $P = P_0$, $l = 0$ and $T = T_1$ is already contained in [ibid., Proposition 5.1].
The main difference is that we now have to keep track of the dependence of $T$.

To that end, we first recall \cite[Proposition 3.1]{MR2801400}.
Let $V$ be a finite-dimensional real vector space and let $\Diff(V)$ be the space of invariant differential operators on $V$
with the standard filtration.
Let $\funct(V)$ be the Fr\'echet space of smooth functions $f$ on $V$ such that $\norm{f*D}_{L^1(V)}<\infty$
for any $D\in\Diff(V)$.
For any $f\in C_c^\infty(V)$ let $\hat f$ be its Fourier-Laplace transform given by
\[
\hat f(\lambda)=\int_Ve^{-\sprod{\lambda}v}f(v)\ dv,\ \ \lambda\in V^*_{\C},
\]
where $V^*_{\C}$ the complexified dual space of $V$.
Then $\hat f$ is an entire function which is rapidly decreasing for $\Re\lambda$ in a compact set.

Fix a linearly independent set $S$ in $V$ and $\mu_0\in V^*$.
Let $h$ be a holomorphic function on
the set of $\lambda \in V^*_{\C}$ with
$\Re\lambda\in\mathcal{R}$, where $\mathcal{R}$ is a bounded connected open subset of $V^*$ containing $\mu_0$.
Assume that $h$ is majorized by a polynomial function and let $\lambda_0\in\mathcal{R}$ be with $\sprod{\lambda_0-\mu_0}u>0$ for all $u\in S$. Then
\[
f\in C_c^\infty(V)\mapsto\int_{\Re\lambda=\lambda_0}\frac{\hat f(\lambda-\mu_0)h(\lambda)}{\prod_{u\in S}\sprod{\lambda-\mu_0}u}\ d\lambda
\]
extends to a continuous functional on $\funct(V)$.
We now make this statement effective as follows.

\begin{proposition} \label{prop: eff 3.1}
Let $S$, $\mu_0$, $\lambda_0$, and $h$ be as above.
For any $n\ge0$ there exists a continuous seminorm $\mu$ on $\funct(V)$ such that
\[
\abs{\int_{\Re\lambda=\lambda_0}\frac{\hat f(\lambda-\mu_0)h(\lambda)}{\prod_{u\in S}\sprod{\lambda-\mu_0}u}\ d\lambda}\ll_{n,S}
\mu(f)\sum_i\sup_{\Re\lambda=\mu_0}(1+\norm{\lambda})^{-n}\abs{(h*X_i)(\lambda)},
\]
where $(X_i)_i$ is a basis of $\Diff(V^*)_{\le\abs{S}}$.
\end{proposition}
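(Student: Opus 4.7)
\emph{Proof plan.} The plan is to argue by induction on $\abs{S}$.

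For the base case $\abs{S}=0$, the integrand has no singularities, so I would shift the contour from $\Re\lambda=\lambda_0$ to $\Re\lambda=\mu_0$ (justified by holomorphy of $\hat f$ and $h$ on the tube over $\mathcal{R}$ and the rapid decay of $\hat f$ on strips) and estimate
\[
\abs{\int\hat f(\lambda-\mu_0)h(\lambda)\,d\lambda} \le \Bigl(\int_{V^*}(1+\norm{\eta})^n\abs{\hat f(i\eta)}\,d\eta\Bigr)\sup_{\Re\lambda=\mu_0}(1+\norm{\lambda})^{-n}\abs{h(\lambda)},
\]
the first factor being a continuous seminorm on $\funct(V)$ dominated by $\sum_{\abs{\alpha}\le n+\dim V+1}\norm{\partial^\alpha f}_{L^1}$.

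For the inductive step, fix $u_0\in S$ and pick $u_0^*\in V^*$ with $\sprod{u_0^*}{u_0}=1$ and $\sprod{u_0^*}{u}=0$ for $u\in S\setminus\{u_0\}$; then decompose $h$ via a first-order Taylor expansion,
\[
h(\lambda) = h(\pi_{u_0}\lambda) + \sprod{\lambda-\mu_0}{u_0}\,T_{u_0}h(\lambda),
\]
with $\pi_{u_0}\lambda := \lambda - \sprod{\lambda-\mu_0}{u_0}u_0^*$ and $T_{u_0}h(\lambda) := \int_0^1(\partial_{u_0^*}h)(\pi_{u_0}\lambda + s\sprod{\lambda-\mu_0}{u_0}u_0^*)\,ds$. (Shrinking $\mathcal{R}$ to a convex neighborhood of the segment $[\mu_0,\lambda_0]$ ensures the Taylor segments remain in the tube.) Split $I=I'+I''$ correspondingly. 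In $I''$, the prefactor $\sprod{\lambda-\mu_0}{u_0}$ cancels the $u_0$-pole, yielding an integral of the same form as in the proposition but with $(S\setminus\{u_0\}, T_{u_0}h)$ in place of $(S,h)$; the inductive hypothesis then bounds $\abs{I''}$ by a continuous seminorm of $f$ times $\sum_i\sup(1+\norm{\lambda})^{-n}\abs{X_iT_{u_0}h}$ for $X_i$ ranging over a basis of $\Diff(V^*)_{\le\abs{S}-1}$. The segment representation of $T_{u_0}h$, together with the chain rule for the affine contraction $\lambda\mapsto\pi_{u_0}\lambda+s\sprod{\lambda-\mu_0}{u_0}u_0^*$, bounds each such sup by sums of $\sup(1+\norm{\lambda})^{-n}\abs{Yh}$ with $Y\in\Diff(V^*)_{\le\abs{S}}$ of the form $X_i'\partial_{u_0^*}$.

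For $I'$, note that $h(\pi_{u_0}\lambda)$ is independent of $z_0:=\sprod{\lambda-\mu_0}{u_0}$. Writing $\hat f(\lambda-\mu_0)$ as a Laplace integral of $f$, applying Fubini, and closing the $z_0$-contour $\Re z_0=c_0>0$ to the left (picking up the simple pole at $z_0=0$ when the corresponding $v_0<0$) yields
\[
\int_{\Re z_0=c_0}\frac{\hat f(\lambda-\mu_0)}{z_0}\,dz_0 = 2\pi i\,\widehat{f^{(0)}}(\lambda'-\mu_0'),\qquad f^{(0)}(v') := \int_{-\infty}^0 f(v_0u_0+v')\,dv_0,
\]
where $\lambda'$ and $\mu_0'$ denote the projections to the quotient $V^*/\R u_0^*$. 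This reduces $I'$ to the proposition's integral on $V'=V/\R u_0$ with data $(f^{(0)}, h\circ\pi_{u_0}, S\setminus\{u_0\}, \lambda_0')$. Since $\norm{\partial^\alpha f^{(0)}}_{L^1(V')}\le\norm{\partial^\alpha f}_{L^1(V)}$, continuous seminorms on $\funct(V')$ pull back to continuous seminorms on $\funct(V)$; and directional derivatives of $h\circ\pi_{u_0}$ along $V'^*$ are directional derivatives of $h$ transverse to $u_0^*$, lying in $\Diff(V^*)_{\le\abs{S}-1}$. The hard part will be the careful bookkeeping of seminorms and differential operators across inductive steps—in particular, the uniform estimate of $\sup\abs{X_iT_{u_0}h(\lambda)}(1+\norm{\lambda})^{-n}$ by sums of sups involving operators in $\Diff(V^*)_{\le\abs{S}}$, and ensuring that each seminorm on $\funct(V/\R u_0)$ lifts to a continuous seminorm on $\funct(V)$. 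Summing the bounds for $I'$ and $I''$ completes the induction.
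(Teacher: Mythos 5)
Your proposal is correct and is essentially the same argument as the paper's, just reorganized. The paper cites the proof of \cite[Proposition 3.1]{MR2801400} for the decomposition of $\hat f(\lambda-\mu_0)h(\lambda)/\prod_{u\in S}\sprod{\lambda-\mu_0}{u}$ into a sum over subsets $I\subset S$ of integrals over the affine flats $\mu_0+\iii I^\perp$, with integrand involving the divided differences $h_{S,I}$, and then only adds the new ingredient: the mean-value estimate $\abs{h_{S,I}(\lambda)}\ll\sum_i\sup_{\norm{\mu}\le\norm{\lambda}}\abs{(h*X_i)(\mu)}$. Your induction on $\abs{S}$ unwinds exactly this decomposition one direction at a time: your first-order Taylor split $h = h\circ\pi_{u_0} + \sprod{\lambda-\mu_0}{u_0}T_{u_0}h$ is the single-variable divided-difference step, the $I''$ branch feeds $T_{u_0}h$ back into the induction (accumulating one derivative per step, which is why $\Diff(V^*)_{\le\abs{S}}$ appears), and the $I'$ branch performs the residue/contour shift in the $u_0$-direction to pass to $V'=V/\R u_0$ — which is precisely the reduction to integration over the flat. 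Both approaches rely on the same algebraic identity and the same mean-value bound; yours makes the contour-shifting and seminorm bookkeeping explicit rather than outsourcing them to \cite{MR2801400}.
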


\begin{proof}
We may assume without loss of generality that $\mu_0=0$.
Following the proof of
\cite[Proposition 3.1]{MR2801400},
let $\varpi_u \in V^*$, $u \in S$, be elements with
$\sprod{\varpi_u}{u'} = \delta_{u,u'}$ and
define for any $I\subset S$ the holomorphic function $h_{S,I}$ by
\[
h_{S,I}(\lambda)=\frac{\sum_{I\subset J\subset S}(-1)^{\abs{J}-\abs{I}}h(\lambda - \sum_{u \in J} \sprod{\lambda}u \varpi_u)}{\prod_{u\in S\setminus I}\sprod{\lambda}u}.
\]
We then need to estimate
$(1+ \norm{\lambda})^{-n} \abs{h_{S,I} (\lambda)}$
for $\lambda \in \iii I^\perp$.

Let $f$ be a smooth function on $\R$ and
$g(x)=(f(x)-f(0))/x$. Then we have
\[
\abs{g (x)} =
\abs{\int_0^1f'(tx)\ dt}
\le\sup_{\abs{t}\le\abs{x}}\abs{f'(t)}
\]
for all $x \in \R$.
Applying this to the independent variables
$\sprod{\lambda}{u}$, $u\in S\setminus I$, we obtain the estimate
\[
\abs{h_{S,I}(\lambda)}\ll\sum_i\sup_{\mu\in\iii V^*:\norm{\mu}\le\norm{\lambda}}\abs{(h*X_i)(\mu)}, \quad \Re\lambda=0,
\]
which finishes the proof.
%
\end{proof}


\begin{lemma} \label{LemmaConeIntegral}
Let $P$ be a standard parabolic subgroup of $G$
and $w \in W$ with $Q(w)P=G$. Then the integral
\[
\phi_{T,P,l}(\lambda)=\int_{A_0}a^{w^{-1}\lambda-\lambda}\chi_{T,P,l}(a)\ da
\]
converges absolutely and uniformly for $\Re\lambda$ in any compact subset of the positive Weyl chamber.
Moreover, for $\Re\lambda=\rho_0$ we have
\[
\abs{(\phi_{T,P,l}*D)(\lambda)}\ll_{D,l}
(1+\norm{T})^{d+ \dim \aaa^P_0} e^{-\sprod{\smofrts P{}}T}
\]
for any differential operator $D\in\Diff(\aaa_0^*)$ of degree $d$.
\end{lemma}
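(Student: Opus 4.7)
The plan is to factor $\phi_{T,P,l}(\lambda)$ as a product of a cone integral over $\aaa_P$ and a bounded integral over $\aaa_0^P$, by exploiting the decomposition $\aaa_0 = \aaa_0^P \oplus \aaa_P$. First I parametrize $A_0$ by $\aaa_0$ via $\Ht_0$, so that $da$ becomes Lebesgue measure and $a^\mu = e^{\sprod{\mu}{X}}$, and write $X = X^P + X_P$ with $X^P \in \aaa_0^P$, $X_P \in \aaa_P$. Since $\Delta_0^P, \hat\Delta_0^P \subset (\aaa_0^P)^*$ and $\Delta_P \subset \aaa_P^*$, the weight factors as $\chi_{T,P,l}(a) = \chi_P(X_P)\,\chi^P(X^P)$, where $\chi_P$ is supported on the shifted cone $\{X_P : \tau_P(X_P - T_P)=1\}$ and carries the polynomial factor $\sum_{\alpha \in \Delta_P}\sprod{\alpha}{X_P - T_P}^l$, while $\chi^P$ is the characteristic function of the bounded parallelotope $\{X^P : \tau_0^P(X^P - T_1^P) = \hat\tau_0^P(T^P - X^P) = 1\}$ in $\aaa_0^P$. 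Correspondingly $\phi_{T,P,l}(\lambda) = I_P(\lambda)\, I^P(\lambda)$, with $\mu := w^{-1}\lambda - \lambda$ providing the exponent in both pieces.

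For $I_P(\lambda)$, substituting $Y = X_P - T_P$ and expanding $Y$ in the basis $(\hat\Delta_P)^\vee$ dual to $\Delta_P$ writes $I_P$ as $e^{\sprod{\mu_P}{T_P}}$ times a finite sum of products of one-variable integrals $\int_0^\infty y^{l_\alpha} e^{c_\alpha(\lambda)\, y}\, dy$, where $\mu_P$ denotes the $\aaa_P^*$-component of $\mu$ and $c_\alpha(\lambda) = \sprod{\mu_P}{(\hat\Delta_P)^\vee_\alpha}$. These converge absolutely and uniformly for $\Re\lambda$ in any compact subset of the positive Weyl chamber precisely when $\Re c_\alpha(\lambda) < 0$ for each $\alpha \in \Delta_P$. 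This positivity is exactly where the hypothesis $Q(w)P = G$ enters: by the standard identity $\rho_0 - w^{-1}\rho_0 = \sum_{\beta \in \Phi^+,\ w\beta < 0}\beta$ together with the combinatorial fact (cf.~the proof of \cite[Proposition 5.1]{MR2801400}) that $Q(w)P = G$ forces the $\aaa_P^*$-projection of this sum to dominate $\smofrts P{}$ relative to the basis $\Delta_P$, the desired strict negativity of $\Re c_\alpha$ holds in a neighborhood of $\rho_0$.

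Executing the one-dimensional integrals yields $|I_P(\lambda)| \ll_l e^{\sprod{\Re\mu_P}{T_P}}$ with a constant independent of $T$, while the bounded integral is estimated by its volume times the pointwise sup of the integrand, giving $|I^P(\lambda)| \ll (1+\norm T)^{\dim \aaa_0^P}\, e^{\sprod{\Re\mu^P}{T^P}}$ up to a bounded shift in the exponent (absorbed by the fixed base point $T_1^P$). Multiplying, the total exponent at $\Re\lambda = \rho_0$ is $\sprod{w^{-1}\rho_0 - \rho_0}{T}$ modulo bounded error, which the combinatorial observation above shows is $\le -\sprod{\smofrts P{}}{T}$. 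Applying a differential operator $D \in \Diff(\aaa_0^*)$ of degree $d$ pulls down polynomial factors in $X$ of total degree $\le d$ inside the integrand, producing an additional $(1+\norm T)^d$ via the same pointwise estimates and moment computations (moments of the univariate integrals, respectively trivial bounds on the bounded region). This yields the claimed $(1+\norm T)^{d+\dim \aaa_0^P} e^{-\sprod{\smofrts P{}}{T}}$. The one delicate point is the combinatorial sign analysis tying $Q(w)P = G$ to positivity of $-(w^{-1}\rho_0 - \rho_0)_P$ in the $\Delta_P$-cone; everything else is routine evaluation of exponential integrals over cones and parallelotopes.
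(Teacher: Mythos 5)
Your factorization of $\phi_{T,P,l}$ into a cone integral over $\aaa_P$ and a bounded integral over $\aaa_0^P$, and the use of \cite[Lemma~2.2]{MR2801400} to get convergence, is exactly the paper's approach. However, your treatment of the bounded piece $I^P(\lambda)$ has a real gap.

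You estimate $I^P(\lambda)$ by ``volume times pointwise sup'' and claim the sup of $e^{\sprod{\Re\mu^P}{X^P}}$ over the parallelotope is $e^{\sprod{\Re\mu^P}{T^P}}$ up to a bounded shift absorbed by $T_1^P$. This is not justified. The region $\tau_0^P(Y-T_1)\hat\tau_0^P(T-Y)=1$ is the convex hull of the vertices $T_Q^P + T_1^Q$, $P_0\subset Q\subset P$, and the sup of an exponential over a polytope is at one of the vertices; you are implicitly assuming it is always at the vertex $T^P$ (corresponding to $Q=P_0$). But $\sprod{\Re\mu^P}{T^P} - \sprod{\Re\mu^P}{T_Q^P} = \sprod{\Re\mu^P}{T_0^Q}$, and the sign of this is not under control for general intermediate $Q$, since $\Re\mu^P$ need not lie in a cone making this nonnegative for all $T$ with $T-T_1$ in the closed positive chamber. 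The paper avoids this by bounding $\abs{(\psi_T^P*D^P)(\lambda)}$ by a \emph{sum} over all vertices $Q$, giving $\sum_Q e^{\sprod{w^{-1}\rho_0-\rho_0}{T_Q^P}}$, and then proving the decisive inequality $\sprod{w^{-1}\rho_0-\rho_0}{T_Q}\le -\sprod{\xi_P}{T}$ for \emph{every} $Q$ with $P_0\subset Q\subset P$ (not just $Q=P_0$, which is the only case your exponent $\sprod{w^{-1}\rho_0-\rho_0}{T}$ addresses). Your final ``combinatorial observation'' is also stated too loosely --- ``the $\aaa_P^*$-projection of this sum dominates $\xi_P$ relative to $\Delta_P$'' is not the statement actually needed; what is needed is precisely the vertex-wise inequality above. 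To repair your argument, replace the single-corner bound for $I^P$ by a maximum (or sum) over all vertices $T_Q^P + T_1^Q$, and then verify $\sprod{w^{-1}\rho_0-\rho_0}{T_Q}\le -\sprod{\xi_P}{T}$ for each $Q$, as the paper does.
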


\begin{proof}
Using the direct sum decomposition $\aaa_0 =
\aaa_P \oplus \aaa^P_0$, we first apply Fubini's theorem formally to obtain
$\phi_{T,P,l} (\lambda) = \psi_{T,P,l} (\lambda)
\psi^P_{T} (\lambda)$ with
\[
\psi_{T,P,l} (\lambda)
=\int_{\aaa_P}e^{\sprod{w^{-1} \lambda - \lambda} X}\tau_P(X-T)\sum_{\alpha\in\Delta_P}\sprod{\alpha}{X-T}^l\ dX
\]
and
\[
\psi^P_{T} (\lambda) =
\int_{\aaa_0^P} e^{\sprod{w^{-1} \lambda - \lambda} Y}
\tau_0^P(Y-T_1)\hat\tau_0^P(T-Y)\ dY.
\]
The integrand in the definition of $\psi^P_{T} (\lambda)$
is compactly supported, and the integral therefore converges absolutely for any value of $\lambda$.
For $\lambda = \lambda_0 \in \aaa_0^*$, the integrands above are all non-negative real, and it remains to check the convergence of the integral defining $\psi_{T,P,l} (\lambda_0)$
for $\lambda_0$ in the positive Weyl chamber.
By \cite[Lemma 2.2]{MR2801400}, for such values of
$\lambda_0$ we have
$\lambda_0-w^{-1}\lambda_0=\sum_{\alpha\in\Delta_0^{Q(w)}}c_\alpha\alpha$ with $c_\alpha>0$ for all
$\alpha\in\Delta^{Q(w)}_0$.
Since $\Delta_0^{Q(w)}\cup\Delta_0^P=\Delta_0$ by our
assumptions on $P$ and $w$, we have
$\sprod{\lambda_0-w^{-1}\lambda_0}{\varpi^\vee}>0\text{ for all }\varpi^\vee\in\hat\Delta_P^\vee$.
The convergence assertion follows.

Furthermore, in the range of absolute convergence for $\psi_{T,P,l} (\lambda)$ we can express $X$ in the basis $\hat\Delta_P^\vee$ and compute
\[
\psi_{T,P,l} (\lambda) =
\vol(\aaa_P/\Z\hat\Delta_P^\vee)e^{\sprod{w^{-1}\lambda - \lambda}{T_P}}\prod_{\varpi^\vee\in\hat\Delta_P^\vee}\frac 1{\sprod{w^{-1}\lambda - \lambda}{\varpi^\vee}}
\sum_{\varpi^\vee\in\hat\Delta_P^\vee}\frac{l!}{\sprod{w^{-1}\lambda - \lambda}{\varpi^\vee}^l}.
\]

To estimate the derivatives of $\phi_{T,P,l}(\lambda)$
for $\Re\lambda = \rho_0$,
we may without loss of generality assume that $D = D_P D^P$
with $D_P \in\Diff(\aaa_P^*)$ and
$D^P \in\Diff((\aaa^P_0)^*)$ of degree $d_P$ and $d^P$, respectively. It is then clear from the expression above that
\[
\abs{(\psi_{T,P,l}*D_P) (\lambda)} \ll_{D_P, l}
(1 + \norm{T})^{d_P}
e^{\sprod{w^{-1}\rho_0 - \rho_0}{T_P}}.
\]
On the other hand, the function $\tau_0^P(Y-T_1)\hat\tau_0^P(T-Y)$ is the
characteristic function of the convex hull of the points
$\{T_Q^P + T_1^Q \, : \, P_0\subset Q\subset P\}$.
Therefore,
\[
\left|(\psi_T^P* D^P)(\lambda)\right|\ll_{D^P}(1+\lVert T \rVert)^{d^P +
\operatorname{dim}\mathfrak{a}_0^P}
\sum_{P_0\subset Q\subset P}e^{\left\langle w^{-1}\rho_0-\rho_0, T_Q^P
\right\rangle}.
\]
Since for any $P_0\subset Q\subset P$ we have
$\left\langle w^{-1}\rho_0-\rho_0, T_Q \right\rangle \le - \left\langle\xi_P, T \right\rangle$, the required estimate follows.
\end{proof}

It is of course possible to evaluate the integral $\phi_{T,P,l} (\lambda)$ explicitly, since the factor $\psi^P_{T} (\lambda)$
can be computed by applying \cite[Lemma 2.2]{MR625344}.

\begin{proof}[Proof of Proposition \ref{prop: modprop51}]
Following \cite[\S 5]{MR2801400}, we may assume $f
\in \funct (G(\A)^1)$ to be compactly supported, non-negative and right $\K$-invariant,
and write the integral as
\[
\int_{\Re\lambda = \lambda_0}
\phi_{T,P,l} (\lambda) m (w^{-1}, \lambda) \varphi(\lambda)
\, d \lambda
\]
for $\lambda_0 \in \aaa_0^*$ such that
$\lambda_0 - \rho_0$ lies in the positive Weyl chamber.
Here, the scalar $m (w^{-1}, \lambda)$ is the spherical intertwining operator (cf. [ibid., \S 3.3]) and
\[
\varphi (\lambda)
= \int_{A_0} \int_{P_0 (\A)^1} f (pa) a^{-(\lambda+\rho_0)}
\ dp \ da.
\]
It remains to apply Proposition \ref{prop: eff 3.1} with
$V = \aaa_0$, $\mu_0 = \rho_0$ and $S = \{ \alpha^\vee \in \Delta^\vee _0: \, w^{-1} (\alpha) < 0 \}$, and to invoke the estimate
of Lemma \ref{LemmaConeIntegral}.
\end{proof}

\section{Alternating sum-integrals over unipotent radicals} \label{sec: auxil}

\subsection{}
Let $I$ be a finite set and $\prmlvl\ge1$ an integer parameter (which will eventually be taken to be essentially the level of $K$).
For any $I'\subset I$ let $[0,\prmlvl]^{I;I'}$ be the face of the cube $[0,\prmlvl]^I$
consisting of the vectors whose coordinates in $I'$ vanish, endowed with the normalized Lebesgue measure.
Using integration by parts it is easy to see that
\[
\int_{[0,\prmlvl]^I}\frac{\partial^{\abs{I'}}f}{\prod_{i\in I'}\partial x_i}(x)\prod_{i\in I'}(x_i-\prmlvl)\ dx=
\sum_{I''\subset I'}(-1)^{\abs{I'\setminus I''}}\int_{[0,\prmlvl]^{I;I''}}f(x)\ dx,
\]
or equivalently,
\[
\int_{[0,\prmlvl]^{I;I'}}f(x)\ dx=
\sum_{I''\subset I'}\int_{[0,\prmlvl]^I}\frac{\partial^{\abs{I''}}f}{\prod_{i\in I''}\partial x_i}(x)\prod_{i\in I''}(x_i-\prmlvl)\ dx
\]
for any $f\in C^{\abs{I}}([0,\prmlvl]^I)$ and $I'\subset I$.
Thus, given any numbers $c_{I'}\in\C$ indexed by the subsets $I'$ of $I$, we have
\begin{equation} \label{eq: gencoefip}
\sum_{I'\subset I}c_{I'}\int_{[0,\prmlvl]^{I;I'}}f(x)\ dx=
\sum_{I'\subset I}d_{I'}\int_{[0,\prmlvl]^I}\frac{\partial^{\abs{I'}}f}{\prod_{i\in I'}\partial x_i}(x)\prod_{i\in I'}(x_i-\prmlvl)\ dx,
\end{equation}
where $d_{I'}=\sum_{I''\supset I'}c_{I''}$.
We single out a special case.

\begin{lemma} \label{lem: altsumvscls}
Let $I_j$, $j\in J$, be a family of (not necessarily disjoint) non-empty subsets of $I$.
For any $J'\subset J$ let $[0,\prmlvl]^I_{J'}$
be the face of $[0,\prmlvl]^I$
consisting of the vectors whose support is contained in the index set $\cup_{j\notin J'}I_j \subset I$,
endowed with the normalized Lebesgue measure.
Then
for any $f\in C^{\abs{I}}([0,\prmlvl]^I)$ we have
\[
\sum_{J'\subset J}(-1)^{\abs{J\setminus J'}}\int_{[0,\prmlvl]^I_{J'}}f(x)\ dx=
\sideset{}{'}\sum_{I'}\int_{[0,\prmlvl]^I}\frac{\partial^{\abs{I'}}f}{\prod_{i\in I'}\partial x_i}(x)\prod_{i\in I'}(x_i-\prmlvl)\ dx,
\]
and in particular
\begin{equation} \label{eq: sumJ}
\abs{\sum_{J'\subset J}(-1)^{\abs{J'}}\int_{[0,\prmlvl]^I_{J'}}f(x)\ dx}\le\prmlvl^{\abs{I}}
\sideset{}{'}\sum_{I'}\int_{[0,\prmlvl]^I}\abs{\frac{\partial^{\abs{I'}}f}{\prod_{i\in I'}\partial x_i}(x)}\ dx,
\end{equation}
where the sums on the right-hand sides range over the subsets $I'\subset I$ such that $I'\cap I_j\ne\emptyset$ for all $j\in J$.
\end{lemma}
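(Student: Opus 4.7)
The plan is to apply the identity \eqref{eq: gencoefip} to a suitably indexed family of faces, and to collapse the resulting dual coefficients by inclusion--exclusion. First I would observe that the face $[0,\prmlvl]^I_{J'}$ coincides with $[0,\prmlvl]^{I;A(J')}$ in the notation of \eqref{eq: gencoefip}, where
\[
A(J') := I \setminus \bigcup_{j \notin J'} I_j
\]
is the set of indices lying outside every $I_j$ with $j \notin J'$. This identifies the left-hand side of the desired identity as $\sum_{I' \subset I} c_{I'} \int_{[0,\prmlvl]^{I;I'}} f(x)\, dx$, where
\[
c_{I'} = \sum_{\substack{J' \subset J \\ A(J') = I'}} (-1)^{|J \setminus J'|}.
\]

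The heart of the argument is then a direct computation of the coefficient $d_{I'} = \sum_{I'' \supset I'} c_{I''}$ that appears on the right-hand side of \eqref{eq: gencoefip}. Regrouping gives
\[
d_{I'} = \sum_{\substack{J' \subset J \\ A(J') \supset I'}} (-1)^{|J \setminus J'|}.
\]
Setting $J(I') := \{\, j \in J : I_j \cap I' \neq \emptyset \,\}$, the condition $A(J') \supset I'$ is equivalent to $J(I') \subset J'$. Writing $K := J \setminus J(I')$ and parametrizing the allowed $J'$ by $K' := J' \setminus J(I') \subset K$, the sum collapses to $\sum_{K' \subset K} (-1)^{|K \setminus K'|} = (1 - 1)^{|K|}$, which vanishes unless $K = \emptyset$, i.e., unless $J(I') = J$. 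This last condition is precisely the primed condition $I' \cap I_j \neq \emptyset$ for every $j \in J$, so $d_{I'} = 1$ in that case and $d_{I'} = 0$ otherwise.

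Substituting this value of $d_{I'}$ back into \eqref{eq: gencoefip} yields the first identity of the lemma. For the inequality \eqref{eq: sumJ}, note that replacing $(-1)^{|J \setminus J'|}$ by $(-1)^{|J'|}$ only introduces a global sign $(-1)^{|J|}$, which disappears under the absolute value; the uniform bound $\prod_{i \in I'} |x_i - \prmlvl| \le \prmlvl^{|I'|} \le \prmlvl^{|I|}$ on the cube then extracts the prefactor $\prmlvl^{|I|}$. The only substantive point in the whole argument is the inclusion--exclusion collapse of $d_{I'}$, and this is essentially a one-line computation; no real obstacle is anticipated.
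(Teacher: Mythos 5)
Your proposal is correct and follows the same route as the paper: identify the left-hand side as an instance of the general identity \eqref{eq: gencoefip} with $c_{I'}$ the signed count of $J'$ with $\cup_{j\notin J'}I_j=I\setminus I'$, then compute $d_{I'}$ by re-sorting the double sum and collapsing it by inclusion--exclusion to $0$ or $1$ according to whether $I'$ meets every $I_j$. You spell out the collapse via $(1-1)^{|K|}$ in slightly more detail than the paper, but the argument is the same.
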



\begin{proof}
Indeed, we take
\[
c_{I'}=\sum_{J'\subset J:\cup_{j\notin J'}I_j=I\setminus I'}(-1)^{\abs{J\setminus J'}},
\]
and note that
\[
d_{I'}=\sum_{I''\supset I'}c_{I''}=\sum_{J'\subset J:\cup_{j\notin J'}I_j\subset I\setminus I'}(-1)^{\abs{J\setminus J'}}=
\sum_{\{j\in J:I_j\cap I'\ne\emptyset\}\subset J'\subset J}(-1)^{\abs{J\setminus J'}},
\]
which is $1$ if $I_j\cap I'\ne\emptyset$ for all $j\in J$, and $0$ otherwise.
\end{proof}

As an immediate consequence we derive an adelic version as follows.
Let $\fund_{\fin}(\prmlvl)$ be the compact open subgroup
\[
\fund_{\fin}(\prmlvl)=\prod_{p<\infty}p^{v_p(\prmlvl)}\Z_p
\]
of $\A_f$ and let $\fund(\prmlvl)$ be the set
\[
\fund(\prmlvl)=[0,\prmlvl)\times\fund_{\fin}(\prmlvl),
\]
endowed with the product of the Lebesgue measure and the Haar measure, normalized such that $\vol(\fund(\prmlvl))=1$.
The set $\fund(\prmlvl)$ is a fundamental domain for $\Q\bs\A$.

\begin{lemma} \label{lem: altsumvs}
Let $I_j$, $j\in J$, be as in Lemma \ref{lem: altsumvscls}.
Then for any $f\in C^{\abs{I}}(\A^I;\fund_{\fin}(\prmlvl)^I)$ we have
\[
\abs{\sum_{J'\subset J}(-1)^{\abs{J'}}\int_{\fund^I_{J'}(\prmlvl)}f(x)\ dx}\le\prmlvl^{\abs{I}}
\sideset{}{'}\sum_{I'}\int_{\fund(\prmlvl)^I}\abs{\frac{\partial^{\abs{I'}}f}{\prod_{i\in I'}\partial x_i}(x)}\ dx,
\]
where $\fund(\prmlvl)^I_{J'}$ is the subset of $\fund(\prmlvl)^I$ consisting of the vectors whose coordinates outside
$\cup_{j\notin J'}I_j$ vanish (with the natural measure normalized by $\vol=1$).
\end{lemma}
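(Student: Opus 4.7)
The plan is to reduce this adelic statement directly to the purely archimedean Lemma \ref{lem: altsumvscls}, using the $\fund_{\fin}(\prmlvl)^I$-invariance of $f$ to collapse all finite-adelic integrations to volume factors.

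More concretely, I would first observe that $\fund(\prmlvl) = [0,\prmlvl) \times \fund_{\fin}(\prmlvl)$ has total volume $1$, which forces the normalization $\vol(\fund_{\fin}(\prmlvl)) = \prmlvl^{-1}$. Define $\tilde f \in C^{\abs{I}}([0,\prmlvl]^I)$ by $\tilde f(y) = f((y_i, 0_f)_{i \in I})$, where $0_f$ denotes the zero element of $\A_f$; this makes sense because $f$ is $C^{\abs{I}}$ in the archimedean direction. For any $J' \subset J$, set $I_{J'} = \cup_{j \notin J'} I_j$. Using $\fund_{\fin}(\prmlvl)^{I_{J'}}$-invariance of $f$ to carry out the finite-adelic integration for the nonzero coordinates, one obtains
\[
\int_{\fund^I_{J'}(\prmlvl)} f(x)\ dx = \prmlvl^{-\abs{I_{J'}}} \int_{[0,\prmlvl)^{I_{J'}}} \tilde f(y_{I_{J'}}, 0_{I \setminus I_{J'}})\ dy = \int_{[0,\prmlvl]^I_{J'}} \tilde f(x)\ dx,
\]
where the last equality uses the fact that the normalized Lebesgue measure on the face $[0,\prmlvl]^I_{J'}$ (of dimension $\abs{I_{J'}}$) is exactly $\prmlvl^{-\abs{I_{J'}}}$ times the standard Lebesgue measure.

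Applying Lemma \ref{lem: altsumvscls} to $\tilde f$ immediately yields
\[
\abs{\sum_{J'\subset J}(-1)^{\abs{J'}}\int_{\fund^I_{J'}(\prmlvl)}f(x)\ dx}
\le \prmlvl^{\abs{I}} \sideset{}{'}\sum_{I'}\int_{[0,\prmlvl]^I}\abs{\frac{\partial^{\abs{I'}}\tilde f}{\prod_{i\in I'}\partial x_i}(x)}\ dx.
\]
Finally, to return to the adelic side, it suffices to note that the archimedean partial derivatives of $\tilde f$ agree with the corresponding archimedean partial derivatives of $f$ evaluated at $(y, 0_f)$ (since those derivatives respect the $\fund_{\fin}(\prmlvl)^I$-invariance), so the same volume-collapsing argument as in the first step gives
\[
\int_{[0,\prmlvl]^I}\abs{\frac{\partial^{\abs{I'}}\tilde f}{\prod_{i\in I'}\partial x_i}(x)}\ dx = \int_{\fund(\prmlvl)^I}\abs{\frac{\partial^{\abs{I'}} f}{\prod_{i\in I'}\partial x_i}(x)}\ dx,
\]
completing the proof.

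There is essentially no analytic obstacle here; the only point requiring care is the bookkeeping of measure normalizations, namely matching the ``$\vol = 1$'' conventions on the adelic faces $\fund(\prmlvl)^I_{J'}$ with the ``normalized Lebesgue measure'' on the cube faces $[0,\prmlvl]^I_{J'}$. Once this identification is pinned down, the adelic lemma is a direct transcription of its classical counterpart.
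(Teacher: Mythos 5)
Your argument is correct and is exactly the unwinding that the paper has in mind when it labels the adelic version an "immediate consequence" of Lemma \ref{lem: altsumvscls} (the paper gives no explicit proof). The key points — factoring $\vol(\fund_{\fin}(\prmlvl)) = \prmlvl^{-1}$, using $\fund_{\fin}(\prmlvl)^I$-invariance to collapse the finite-adelic integration to the slice at $0_f$, matching the $\vol = 1$ normalizations on $\fund^I_{J'}(\prmlvl)$ against the normalized Lebesgue measure on $[0,\prmlvl]^I_{J'}$, and noting that archimedean partial derivatives commute with restriction to that slice — are precisely the content of the reduction.
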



\subsection{}
We fix once and for all a basis $(e_\alpha)_{\alpha\in\roots_0}$ of $\nnn_0$, indexed by a set $\roots_0$, such that $\Ad(a)$, $a \in A_0$,
acts on each basis vector $e_\alpha$ by multiplication with a character. We simply write
$\Ad(a)e_\alpha=\alpha(a)e_\alpha$ for all $a\in A_0$, $\alpha\in\roots_0$, i.e., we consider the index set
$\roots_0$ as the set of roots of $A_0$ on $N_0$, counting multiplicities.
For $\alpha,\beta\in\roots_0$ we write $\alpha\prec\beta$ if $\beta-\alpha=\sum_{\gamma\in\Delta_0}x_\gamma\gamma$
where $x_\gamma\ge0$ for all $\gamma$.
For any standard parabolic subgroup $P\subset G$ we view the set $\roots_P$ of roots of $A_0$ on $\nnn$ as a subset of $\roots_0$.
The vectors $(e_\alpha)_{\alpha\in\roots_P}$ form then a basis of $\nnn$.
Let $\coor_P:\A^{\roots_P}\rightarrow\nnn(\A)$ be the isomorphism given by $\coor_P ((x_\alpha)_{\alpha\in\roots_P})=\sum x_\alpha e_\alpha$.
We define
\[
\fund_P(\prmlvl)=\exp(\coor_P(\fund(\prmlvl)^{\roots_P})),\ \ \fund_{P,\fin}(\prmlvl)=\exp(\coor_P(\fund_{\fin}(\prmlvl)^{\roots_P})).
\]
Note that there exists an integer $\prmlvl_0\ge1$, depending only on $G$, such that $\fund_{P,\fin}(\prmlvl)$ is a compact open subgroup of $N(\A_f)$
whenever $\prmlvl$ is divisible by $\prmlvl_0$. Also, $\fund_P(\prmlvl)=\fund_0(\prmlvl)\cap N(\A)$, and more generally
$\fund_Q(\prmlvl)=\fund_P(\prmlvl)\cap N_Q(\A)$ for any $P\subset Q$.

\begin{lemma} \label{lem: fundom}
The set $\fund_P(\prmlvl)$ is a fundamental domain for $N(\Q)\bs N(\A)$.
\end{lemma}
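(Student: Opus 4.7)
The plan is to argue by induction on $k = |\roots_P|$, peeling off one central one-parameter subgroup at a time. First I would order $\roots_P = \{\alpha_1, \ldots, \alpha_k\}$ by height, so that $h(\alpha_1) \le \ldots \le h(\alpha_k)$, where $h(\alpha)$ is the sum of the coefficients of $\alpha$ in $\Delta_0$. Since $[e_\beta, e_\gamma]$ lies in the root space of $\beta + \gamma$, whose height strictly exceeds those of $\beta$ and $\gamma$, each subspace $\nnn^{(j)} = \mathrm{span}(e_{\alpha_j}, \ldots, e_{\alpha_k})$ is a $\Q$-rational ideal of $\nnn$; moreover the top piece $\nnn^{(k)} = \Q e_{\alpha_k}$ is \emph{central}, since $\alpha_k + \beta$ would exceed the maximal height and therefore cannot be a root. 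Because the quotient $\nnn/\nnn^{(k)}$ is not itself of the form $\nnn_{P'}$, the inductive hypothesis must be phrased more generally: I would prove, by induction on $k$, that for any unipotent $\Q$-algebraic group $\tilde N$ together with a $\Q$-basis $\tilde e_1, \ldots, \tilde e_k$ of its Lie algebra such that $\mathrm{span}(\tilde e_j, \ldots, \tilde e_k)$ is an ideal for each $j$, the set $\exp(\sum_{j=1}^{k} \fund(\prmlvl) \tilde e_j)$ is a fundamental domain for $\tilde N(\Q)\bs\tilde N(\A)$. The original statement is the case $\tilde N = N$.

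For the inductive step, set $N' = \exp(\Q \tilde e_k) \cong \affine$, a central $\Q$-subgroup of $\tilde N$, and $\bar N = \tilde N/N'$. Centrality of $\tilde e_k$ together with the Baker--Campbell--Hausdorff formula yields $\exp(X)\exp(t\tilde e_k) = \exp(X + t\tilde e_k)$ for all $X \in \tilde\nnn(\A)$ and $t \in \A$; under the bijection of $\Q$-varieties $\tilde\nnn \xrightarrow{\exp} \tilde N$, the candidate fundamental domain therefore factors as $\sigma(\bar\fund)\cdot \exp(\fund(\prmlvl)\tilde e_k)$, where $\bar\fund$ is the analogous set for $\bar N$ with the inherited ideal-compatible basis $\overline{\tilde e_1}, \ldots, \overline{\tilde e_{k-1}}$ of $\bar\nnn$, and $\sigma : \bar N \to \tilde N$ is the exponential section sending the class of $\exp(\sum_{j<k} y_j \tilde e_j)$ to $\exp(\sum_{j<k} y_j \tilde e_j)$. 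The inductive hypothesis furnishes $\bar\fund$ as a fundamental domain for $\bar N(\Q)\bs\bar N(\A)$, while $\exp(\fund(\prmlvl)\tilde e_k)$ is tautologically one for $N'(\Q)\bs N'(\A) = \Q\bs\A$.

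To conclude, I would invoke the general principle that given an exact sequence $1 \to H_1 \to H \to H_2 \to 1$ of locally compact unimodular groups and a discrete subgroup $\Gamma \subset H$ with $\Gamma_1 := \Gamma \cap H_1$ and $\Gamma/\Gamma_1$ mapping onto a lattice $\Gamma_2$ in $H_2$ via a measurable section $\sigma$, the product $\sigma(\fund_2)\cdot\fund_1$ is a fundamental domain for $\Gamma\bs H$ whenever $\fund_i$ is one for $\Gamma_i\bs H_i$. The surjectivity $\tilde N(\Q) \twoheadrightarrow \bar N(\Q)$ needed to identify $\tilde N(\Q)/N'(\Q)$ with $\bar N(\Q)$ follows from $H^1(\Q, \affine) = 0$. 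The main conceptual step is the reformulation in arbitrary unipotent $\Q$-groups with ideal-compatible bases, since $\bar N$ appearing in the induction is not itself the unipotent radical of a standard parabolic; once that is in place, centrality of the top root vector reduces the inductive step to the exact-sequence lemma.
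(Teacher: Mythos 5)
Your proof is correct, but it takes a genuinely different route from the paper's. The paper argues entirely inside $N$: fixing a linear order on $\roots_P$ refining $\prec$, it shows by an upward induction on $\alpha$ that $N(\Q)\,N_{\ge\alpha}(\A)\,\exp(\{\sum_{\beta<\alpha}x_\beta e_\beta : x_\beta\in\fund(\prmlvl)\})=N(\A)$, at each step pulling one abelian layer $N_{\ge\alpha}/N_{>\alpha}$ (central in $N/N_{>\alpha}$) through the expression, and then checks disjointness by a separate direct argument examining the first non-vanishing coordinate of a nontrivial $\gamma\in N(\Q)$. You instead peel off the \emph{top} (maximal-height) root vector, which is central in $N$ itself, and pass to the quotient $N/N'$. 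Since that quotient is no longer the unipotent radical of a standard parabolic, you must first generalize the statement to arbitrary unipotent $\Q$-groups equipped with an ideal-compatible basis — that is the main conceptual move in your version. What this buys you is that both halves of the fundamental-domain property (surjectivity and disjointness) drop out simultaneously from the fibration lemma, so you avoid the paper's separate disjointness check; what the paper's route buys is that it never has to change groups and can stay with a completely hands-on calculation. One small imprecision: the fibration lemma as you state it, with the product in the order $\sigma(\fund_2)\cdot\fund_1$ and an \emph{arbitrary} set-theoretic section $\sigma$, is not correct in general. The existence step produces $h=\gamma_0\,\sigma(f_2)\,\gamma_1\,f_1$ with $\gamma_1\in\Gamma_1$, and one can only absorb $\gamma_1$ into $\Gamma$ on the left if $\sigma(f_2)$ normalizes $\Gamma_1$; without such a hypothesis one should use the order $\fund_1\cdot\sigma(\fund_2)$ instead. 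In your application this is harmless, since $N'$ is central in $\tilde N$ so the two orderings agree, but you should either invoke centrality explicitly at that point or restate the lemma accordingly.
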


\begin{proof}
Fix a linear order $\le$ on $\roots_P$ which extends $\prec$.
Let $N_{\ge\alpha}$ (resp., $N_{>\alpha}$) be the image under $\exp$ of the linear span of $e_\beta$, $\beta\ge\alpha$ (resp., $\beta>\alpha$).
Then $N_{\ge\alpha}$ and $N_{>\alpha}$ are normal subgroups of $N$ defined over $\Q$,
and for all $\alpha\in\roots_P$, $N_{\ge\alpha}/N_{>\alpha}$ is one-dimensional and central in $N/N_{>\alpha}$.
Moreover, $\exp(x+y)\in\exp(x)\exp(y)N_{>\alpha}$ for any $x\in\nnn$ and $y$ in the linear span of $e_\beta$, $\beta\ge\alpha$.
We show by induction on $\alpha$ that
\begin{equation} \label{eq: npi}
N(\Q)N_{\ge\alpha}(\A)\exp(\{\sum_{\beta<\alpha}x_\beta e_\beta:x_\beta\in\fund(\prmlvl)\})=N(\A).
\end{equation}
The case where $\alpha$ is the minimal element of $\roots_P$ is trivial. Assume that \eqref{eq: npi} holds for some $\alpha\in\roots_P$. Then
\begin{eqnarray*}
N(\A) &=& N(\Q)N_{\ge\alpha}(\A)\exp(\{\sum_{\beta<\alpha}x_\beta e_\beta:x_\beta\in\fund(\prmlvl)\})\\ & = &
N(\Q)N_{>\alpha}(\A)N_{\ge\alpha}(\Q)\exp(\{xe_\alpha:x\in\fund(\prmlvl)\})\exp(\{\sum_{\beta<\alpha}x_\beta e_\beta:x_\beta\in\fund(\prmlvl)\})\\
& = &
N(\Q)N_{>\alpha}(\A)\exp(\{xe_\alpha:x\in\fund(\prmlvl)\})\exp(\{\sum_{\beta<\alpha}x_\beta e_\beta:x_\beta\in\fund(\prmlvl)\})\\
& = &
N(\Q)N_{>\alpha}(\A)\exp(\{\sum_{\beta\le\alpha}x_\beta e_\beta:x_\beta\in\fund(\prmlvl)\}).
\end{eqnarray*}
This yields the induction step. Also, for the maximal $\alpha\in\roots_P$ we infer that $N(\Q)\fund_P(\prmlvl)=N(\A)$.

Suppose that $e\ne\gamma\in N(\Q)$ and write $\gamma=\exp(\coor((\lambda_\alpha)_{\alpha\in\roots_P}))$ with $\lambda_\alpha\in\Q$.
Let $\alpha$ be the smallest element of $\roots_P$ such that $\lambda_\alpha\ne0$.
Then $\gamma\in N_{\ge\alpha}$ and we have
\[
N_{>\alpha}(\A)\gamma\fund_P(\prmlvl)=N_{>\alpha}(\A)\exp(\{\lambda_\alpha+\sum_{\beta\le\alpha}x_\beta e_\beta:x_\beta\in\fund(\prmlvl)\}).
\]
Thus, $N_{>\alpha}(\A)\gamma\fund_P(\prmlvl)\cap N_{>\alpha}(\A)\fund_P(\prmlvl)=\emptyset$ and in particular,
$\gamma\fund_P(\prmlvl)\cap\fund_P(\prmlvl)=\emptyset$.
\end{proof}

In order to apply Lemma \ref{lem: altsumvs} to the alternating sum of constant terms, we will need to pass between partial derivatives
of $f\circ\exp$ and derivatives of $f$ for any $f\in C^\infty(N(\A))$. This is quite standard. 
For any $x\in\nnn$ consider the map $\phi_x:\nnn\rightarrow\nnn$ given by $\phi_x(y)=\log(\exp(-x)\exp(x+y))$.
For any $y\in\nnn$ we have
\[
\frac{\partial}{\partial y}(f\circ\exp)(x)=(f*D\phi_x(y))(\exp x),
\]
where $D\phi_x$ is the differential of $\phi_x$ at $0$, considered as a linear transformation from $\nnn$ to itself.
By the well-known formula for the differential of the exponential function we have
\[
D\phi_x=\sum_{k\ge0}\frac{(-\ad_x)^k}{(k+1)!},
\]
where the sum is of course finite. In other words, if for any $\beta\in\roots_P$ we write
$p_\beta(y)$ for the $\beta$-coordinate of $y$ with respect to the basis $(e_\alpha)_{\alpha\in\roots_P}$,
and for any $y\in\nnn$ we denote by $\varphi_y^\beta$ the smooth function $\varphi_y^\beta(x)=p_\beta(D\phi_x(y))$ on $\nnn$ (which is in fact a polynomial function of the archimedean component), then
\begin{equation} \label{eq: convertder}
\frac{\partial}{\partial y}(f\circ\exp)=\sum_{\beta\in\roots_P}\varphi_y^\beta\cdot(f*e_\beta)\circ\exp.
\end{equation}
Note that for any $\alpha\in\roots_P$ and $x\in\nnn$, $D\phi_x(e_\alpha)$ is contained in the span of $e_\beta, \alpha\prec\beta$.
Thus,
\begin{equation} \label{eq: imdphi}
\varphi_{e_\alpha}^\beta\equiv0\text{ if }\alpha\not\prec\beta.
\end{equation}
Moreover,
\[
\frac{\partial\varphi_y^\beta}{\partial e_\alpha}(x)=
\sum_{k\ge0}\frac{(-1)^k}{(k+1)!}\sum_{0\le j<k}(p_\beta\circ(\ad_x)^{k-j-1}\circ\ad_{e_\alpha}\circ(\ad_x)^j)(y),
\]
and hence
\begin{equation} \label{eq: imdphi2}
\frac{\partial\varphi_y^\beta}{\partial e_\alpha}\equiv0\text{ if }\alpha\not\prec\beta.
\end{equation}

For any sequence $J=(\beta_1,\dots,\beta_m)$, $\beta_i\in\roots_P$, we write $e_J=e_{\beta_1}\cdots e_{\beta_m}\in\univ(\nnn)$.

\begin{lemma} \label{lem: convertliegp}
For any sequence $I=(\alpha_1,\dots,\alpha_l)$, $\alpha_i\in\roots_P$, we can write
\begin{equation} \label{eq: liedervsgrp}
\frac{\partial^l(f\circ\exp)}{\prod_i\partial e_{\alpha_i}}=\sum_J\psi_I^J\cdot (f*e_J)\circ\exp
\end{equation}
for any $f\in C^\infty(N(\A))$, where $J$ ranges over the sequences $(\beta_1,\dots,\beta_m)$, $\beta_i\in\roots_P$, $m\le l$,
such that for every $1\le i\le l$ there exists $1\le j\le m$ such that $\alpha_i\prec\beta_j$, and the $\psi_I^J$ are certain smooth functions on $\nnn$, polynomial in the
archimedean component,
which are independent of $f$.
\end{lemma}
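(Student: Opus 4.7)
The plan is to proceed by induction on $l = |I|$, applying the Leibniz rule and repeatedly invoking \eqref{eq: convertder}, while tracking the structure of the coefficient functions $\psi_I^J$ carefully enough to deduce the support condition on $J$ from \eqref{eq: imdphi} and \eqref{eq: imdphi2}.

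For the base case $l=1$, formula \eqref{eq: convertder} with $y=e_{\alpha_1}$ gives the expansion with $\psi_{(\alpha_1)}^{(\beta)}=\varphi_{e_{\alpha_1}}^\beta$, and \eqref{eq: imdphi} shows this vanishes unless $\alpha_1\prec\beta$, matching the required support condition. For the inductive step, assume \eqref{eq: liedervsgrp} holds for $I=(\alpha_1,\ldots,\alpha_l)$, and apply $\partial/\partial e_{\alpha_{l+1}}$ term by term. By the Leibniz rule each summand splits into two pieces: one where the derivative hits the coefficient $\psi_I^J$ (producing $(\partial\psi_I^J/\partial e_{\alpha_{l+1}})\cdot(f*e_J)\circ\exp$, with $|J|$ unchanged), and one where it hits $(f*e_J)\circ\exp$. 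For the latter, we apply \eqref{eq: convertder} to the smooth function $f*e_J$ in place of $f$, obtaining
\[
\psi_I^J\cdot\sum_{\beta\in\roots_P}\varphi_{e_{\alpha_{l+1}}}^\beta\cdot(f*e_J*e_\beta)\circ\exp,
\]
and by \eqref{eq: imdphi} only $\beta$ with $\alpha_{l+1}\prec\beta$ contribute; the new sequence $(J,\beta)$ has length $\le l+1$ and satisfies the required condition for $I'=(\alpha_1,\ldots,\alpha_{l+1})$ (the old $\beta_j$s cover $\alpha_1,\ldots,\alpha_l$ by inductive hypothesis, and the appended $\beta$ covers $\alpha_{l+1}$).

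For the first piece, I would strengthen the inductive hypothesis to say additionally that $\psi_I^J$ is a polynomial in smooth functions of the form $\partial^r\varphi_{e_{\alpha_{i_0}}}^{\beta_j}/\prod_{s=1}^r\partial e_{\alpha_{i_s}}$, where $\beta_j$ is one of the entries of $J$ and the $\alpha_{i_s}$ are entries of $I$. This is preserved under the inductive construction, since both pieces in the Leibniz expansion produce new factors of that form (with new $\beta_j$ in $J$ for the second piece, and no change in $J$ but one more differentiation for the first piece). Granting this structural claim, formula \eqref{eq: imdphi2} forces $\partial\psi_I^J/\partial e_{\alpha_{l+1}}$ to vanish unless some factor has a $\beta_j\in J$ satisfying $\alpha_{l+1}\prec\beta_j$, which is precisely the support condition on $J$ required for $I'$. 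The polynomial dependence on the archimedean component is inherited from $\varphi_y^\beta$ and preserved by sums, products, and partial derivatives.

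I expect the main (really only) obstacle to be the bookkeeping for the strengthened inductive hypothesis about the functional form of $\psi_I^J$; once this is carried along, \eqref{eq: imdphi2} applies cleanly. The calculation is otherwise a routine iteration of \eqref{eq: convertder} and the Leibniz rule, with no analytic content beyond the two vanishing statements \eqref{eq: imdphi} and \eqref{eq: imdphi2}.
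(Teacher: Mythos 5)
Your proof is correct and follows essentially the same route as the paper: induction on $l$ via the Leibniz recursion (equivalently, the paper's recursion $\psi_I^J=\partial\psi_{(\alpha_1,\dots,\alpha_{l-1})}^J/\partial e_{\alpha_l} + \varphi_{e_{\alpha_l}}^{\beta_m}\psi_{(\alpha_1,\dots,\alpha_{l-1})}^{(\beta_1,\dots,\beta_{m-1})}$), with the support condition on $J$ flowing from \eqref{eq: imdphi} together with the vanishing of $\partial\psi_I^J/\partial e_\alpha$ whenever $\alpha\not\prec\beta_j$ for all $j$. The only difference is how this last vanishing is established: the paper proves it by a separate direct induction on $|I|$ using the recursion and \eqref{eq: imdphi2}, whereas you derive it from a strengthened inductive hypothesis describing $\psi_I^J$ explicitly as a constant-coefficient polynomial in iterated partial derivatives of the functions $\varphi_{e_{\alpha'}}^{\beta_j}$ with $\beta_j\in J$; both routes are valid and equivalent in substance.
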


\begin{remark}
The left-hand side of \eqref{eq: liedervsgrp} is unchanged if we permute the $\alpha_i$'s. However, the individual functions
$\psi_I^J$ may depend on the order of the sequence $I$.
\end{remark}

\begin{proof}
Using induction on $m$ and the identity \eqref{eq: convertder} we have the relation \eqref{eq: liedervsgrp}, where
\[
\psi_\emptyset^\emptyset\equiv1,\ \psi_I^\emptyset\equiv0\text{ for }I\ne\emptyset,\
\psi_\emptyset^J\equiv0\text{ for }J\ne\emptyset,
\]
and for $I=(\alpha_1,\dots,\alpha_l)$ and $J=(\beta_1,\dots,\beta_m)$, $l,m>0$, $\psi_I^J$ satisfy the recursion relations
\[
\psi_I^J=\frac{\partial\psi_{(\alpha_1,\dots,\alpha_{l-1})}^J}{\partial e_{\alpha_l}}+
\varphi_{e_{\alpha_l}}^{\beta_m}\cdot\psi_{(\alpha_1,\dots,\alpha_{l-1})}^{(\beta_1,\dots,\beta_{m-1})}.
\]
We first note that using induction on the length of $I$ and property \eqref{eq: imdphi2} we have
\[
\frac{\partial\psi_I^J}{\partial e_\alpha}\equiv0\text{ if }\alpha\not\prec\beta_j\text{ for all }1\le j\le m.
\]
Using this, we can now show by induction on $\abs{I}$ that $\psi_I^J\equiv0$ unless for every $1\le i\le l$
there exists $1\le j\le m$ such that $\alpha_i\prec\beta_j$.
Indeed, for $i<l$ this follows from the induction hypothesis and for $i=l$ this follows from \eqref{eq: imdphi} and the claim above.
\end{proof}

Let now $P_1\subset P_2$ be standard parabolic subgroups.
For brevity we write $\smofrts 12=\smofrts {P_1}{P_2}$, where
$\smofrts {P_1}{P_2}$ has been introduced in \eqref{def: xi}.

\begin{proposition} \label{prop: altsumunipa}
There exist $X_1,\dots,X_m\in\univ(\nnn_1^2)$ such that for any compact open subgroup $K_1$ of $N_1(\A_f)$
and any $a\in A_0$ satisfying $\tau_0^2(\Ht_0(a)-T_1)=1$ we have
\begin{multline*}
\abs{\sum_{P:P_1\subset P\subset P_2}(-1)^{\dim\aaa_P}\sum_{\nu\in N_1^P(\Q)}\int_{N_P(\A)}f(a^{-1}\nu na)\ dn}\ll_{K_1}\\
e^{-\sprod{\smofrts 12}{\Ht_0(a)}}\sum_{i=1}^m\int_{N_1(\A)}\abs{(f*X_i)(a^{-1}na)}\ dn
\end{multline*}
for any $f\in\funct(N_1(\A);K_1)$.
Moreover, if $K_1$ contains $\fund_{P_1,\fin}(\prmlvl)$, then we may take the implied constant to be $\prmlvl^s$,
where $s$ is a positive integer depending only on $G$.
\end{proposition}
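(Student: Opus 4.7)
I would apply Lemma~\ref{lem: altsumvs} with $I = \roots_1^{P_2}$, $J = \Delta_0^{P_2}\setminus\Delta_0^{P_1}$, and, for each $\alpha \in J$, with $I_\alpha$ the set of roots in $\roots_1^{P_2}$ whose coefficient along $\alpha$ is positive. The standard parabolics $P$ with $P_1 \subset P \subset P_2$ are parametrized by subsets $S \subseteq J$ via $\Delta_0^P = \Delta_0^{P_1} \cup S$; under this parametrization one has $\roots_1^P = \roots_1^{P_2} \setminus \bigcup_{\alpha \in J \setminus S} I_\alpha$ and $(-1)^{\dim \aaa_P} = \epsilon \cdot (-1)^{|S|}$ for a fixed sign $\epsilon$.

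First I would parametrize $N_1(\A)$ by $\A^{\roots_1}$ via $\Phi = \exp\circ\coor_{P_1}$ and decompose $\roots_1 = \roots_1^{P_2}\sqcup \roots_{P_2}$. After integrating out the $\roots_{P_2}$-coordinates (which for every $P$ run freely over $\A^{\roots_{P_2}}$, giving a constant term of $f$ along $N_{P_2}$), the remaining $\roots_1^{P_2}$-coordinates split between those in $\roots_1^P$ (summed over $N_1^P(\Q)$) and those in $\roots_P\cap\roots_1^{P_2}$ (integrated adelically). Using the right $\fund_{P_1,\fin}(\prmlvl)$-invariance of $f$ together with Lemma~\ref{lem: fundom} to localize the adelic integrations to $\fund(\prmlvl)$, I would identify this mixed sum-integral with the integral over the face $\fund(\prmlvl)^{\roots_1^{P_2}}_{J'}$ from Lemma~\ref{lem: altsumvs} for $J' = J\setminus S$, up to a factor polynomial in $\prmlvl$.

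Lemma~\ref{lem: altsumvs} then bounds the alternating sum by $\prmlvl^{|\roots_1^{P_2}|}$ times a sum of integrals of partial derivatives $\partial^{|I'|}/\prod_{\beta\in I'}\partial x_\beta$ of $\tilde f_a(x):=f(a^{-1}\Phi(x)a)$, taken over $I' \subseteq \roots_1^{P_2}$ meeting every $I_\alpha$. Each such partial derivative pulls out a factor $\beta(a)^{-1}=e^{-\sprod{\beta}{\Ht_0(a)}}$ from the conjugation by $a$. Since $I'$ meets every $I_\alpha$, the element $\sum_{\beta\in I'}\beta - \smofrts 12$ lies in $\Z_{\ge 0}\Delta_0^{P_2}$, and the hypothesis $\tau_0^{P_2}(\Ht_0(a)-T_1) = 1$ yields the decay factor $e^{-\sprod{\smofrts 12}{\Ht_0(a)}}$ up to a constant depending only on $T_1$. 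Finally, Lemma~\ref{lem: convertliegp} converts these coordinate partial derivatives into Lie algebra derivatives $f*e_J$ with $e_J\in\univ(\nnn_1^{P_2})$ (the condition $\alpha\prec\beta$ there keeps the result inside $\nnn_1^{P_2}$ since $I' \subseteq \roots_1^{P_2}$).

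The principal obstacle will be the identification in the second paragraph: matching the mixed sum-integral $I_P$ with an integration over the face $\fund(\prmlvl)^{\roots_1^{P_2}}_{J'}$. This requires careful handling of the non-abelian Baker--Campbell--Hausdorff corrections in $N_1$ and a precise matching between the rational sum $\sum_{\nu\in N_1^P(\Q)}$ and the vanishing-coordinate structure of the face, using the compatibility of the fundamental domain $\fund(\prmlvl)$ for $\Q\bs\A$ (Lemma~\ref{lem: fundom}) with the right-invariance of $f$ under $\fund_{P_1,\fin}(\prmlvl)\subset K_1$.
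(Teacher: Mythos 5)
Your proposal follows the paper's route: reduce to $P_2=G$ (which you phrase as integrating out the $\roots_{P_2}$-coordinates), apply Lemma~\ref{lem: altsumvs} with $I=\roots_1^{P_2}$, $J=\Delta_0^{P_2}\setminus\Delta_0^{P_1}$ and $I_\alpha$ the positive-$\alpha$-coefficient roots, convert partial derivatives to Lie-algebra derivatives via Lemma~\ref{lem: convertliegp}, use Lemma~\ref{lem: fundom} to recover the mixed sum-integral, and extract the exponential gain from $\Ad(a^{-1})e_\beta=\beta(a)^{-1}e_\beta$ together with the fact that any allowed $I'$ hits every $I_\alpha$. Your analysis of the decay factor and of the sign $(-1)^{\dim\aaa_P}$ is correct.

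Two remarks. First, a small indexing slip: under $\Delta_0^P=\Delta_0^{P_1}\cup S$ the adelic integration is over the coordinates in $\roots_P\cap\roots_1^{P_2}=\bigcup_{\alpha\in J\setminus S}I_\alpha$, so the corresponding face $\fund(\prmlvl)^I_{J'}$ must have support contained in $\bigcup_{j\notin J'}I_j=\bigcup_{\alpha\in J\setminus S}I_\alpha$; this forces $J'=S$, not $J'=J\setminus S$. (The sign alone does not disambiguate since $(-1)^{|S|}$ and $(-1)^{|J\setminus S|}$ differ by the fixed sign $(-1)^{|J|}$.) Second, the ``principal obstacle'' you flag disappears if you reverse the last two steps as the paper does: rather than periodizing $f$ and then identifying the mixed sum-integral with a face integral of the periodization, apply Corollary~\ref{cor: altsumvs23}/Lemma~\ref{lem: altsumvs} to $f$ itself, obtaining a bound for $\bigl|\sum_P(-1)^{\dim\aaa_P}\int_{\fund_P(\prmlvl)}f\bigr|$ in terms of $\int_{\fund_{P_1}(\prmlvl)}\abs{f*e_J}$; then sum this inequality over all left translates by $\nu\in N_1(\Q)$. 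The factorization $N_1(\Q)=N_1^P(\Q)N_P(\Q)$ together with the fundamental-domain property of $\fund_P(\prmlvl)$ (Lemma~\ref{lem: fundom}) converts the left-hand side into $\sum_{\nu\in N_1^P(\Q)}\int_{N_P(\A)}$ and the right-hand side into $\int_{N_1(\A)}\abs{f*e_J}$ without ever needing to reason about derivatives or periodizations of the summed function.
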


\begin{proof}
Upon replacing $f$ by $\int_{N_2(\A)}f(n\cdot)\ dn$ and $G$ by $M_2$, we may assume without loss of generality that $P_2=G$.
We apply Lemma \ref{lem: altsumvs}, taking the coordinates $e_\alpha$, $\alpha\in\roots_{P_1}$.
More precisely, let $I=\roots_{P_1}$, $J=\Delta_0\setminus\Delta_0^1$, and for any $\alpha\in J$ set
$I_\alpha=\roots_{P_\alpha}$, where $P_\alpha$ is the maximal parabolic subgroup of $G$ corresponding to $\alpha$.
Thus, $\cup_{\alpha\notin\Delta_0^P}I_\alpha=\roots_P$ for any $P\supset P_1$.
Take $\prmlvl$ to be a multiple of $\prmlvl_0$ so that $K_1$ contains $\fund_{P_1,\fin}(\prmlvl)$.
By Lemma \ref{lem: altsumvs} we have
\begin{multline*}
\abs{\sum_{P:P_1\subset P}(-1)^{\dim\aaa_P}\int_{\fund_P(\prmlvl)}f(x)\ dx}=
\abs{\sum_{P:P_1\subset P}(-1)^{\dim\aaa_P}\int_{\fund(\prmlvl)^{\roots_P}}f(\exp(\coor_P(\underline{x})))\ d\underline{x}} \le \\
\prmlvl^{\abs{I}} \sideset{}{'}\sum_{I'}\int_{\fund(\prmlvl)^{\roots_{P_1}}}\abs{\frac{\partial^{\abs{I'}}(f\circ\exp\circ\coor_{P_1})}
{\prod_{\alpha\in I'}\partial\underline{x}_\alpha}(\underline{x})}\ d\underline{x}=
\prmlvl^{\abs{I}} \sideset{}{'}\sum_{I'}\int_{\log(\fund_{P_1}(\prmlvl))}\abs{\frac{\partial^{\abs{I'}}(f\circ\exp)}{\prod_{\alpha\in I'}\partial e_\alpha}(x)}\ dx,
\end{multline*}
where the sum is over all $I'\subset I$ such that for any $\alpha\in\Delta_0\setminus\Delta_0^1$ there exists $\beta\in I'$ with $\alpha\prec\beta$.
It follows from Lemma \ref{lem: convertliegp} that
\[
\abs{\sum_{P:P_1\subset P}(-1)^{\dim\aaa_P}\int_{\fund_P(\prmlvl)}f(x)\ dx}\ll
\prmlvl^s \sideset{}{''}\sum_J\int_{\fund_{P_1}(\prmlvl)}\abs{(f*e_J)(x)}\ dx,
\]
for a suitable integer $s$ (depending only on $G$), where $J$ ranges over all sequences $(\beta_1,\dots,\beta_m)$, $m\le\abs{\roots_1}$, such that
for any $\alpha\in\Delta_0\setminus\Delta_0^1$ there exists $1\le j\le m$ with $\alpha\prec\beta_j$.
Summing over all left translates of $f$ by $\nu\in N_1(\Q)$ and using Lemma \ref{lem: fundom} we obtain
\[
\abs{\sum_{P:P_1\subset P}(-1)^{\dim\aaa_P}\sum_{\nu\in N_1^P(\Q)}\int_{N_P(\A)}f(\nu n)\ dn}\ll
\prmlvl^s \sideset{}{''}\sum_J\int_{N_1(\A)}\abs{(f*e_J)(x)}\ dx.
\]
Applying this to $f_a=f(a^{-1}\cdot a)$ we obtain the required bound.
Note that
\[
f_a*e_J(x)=(f*\Ad(a^{-1})e_J)(a^{-1}xa)= \prod_j\beta_j(a^{-1}) (f*e_J)(a^{-1}xa),
\]
and that $\abs{\beta(a^{-1})}\ll1$ for all $\beta\in\roots_P$ by the condition on $a$.
\end{proof}

\begin{corollary} \label{cor: altsumunip}
Let $P_1\subset P_3\subset P_2$ be standard parabolic subgroups.
Then there exist an integer $s$ and $X_1,\dots,X_m\in\univ(\Lieg^1)$ such that
\begin{multline} \label{eq: altsumunip}
\abs{\sum_{P:P_3\subset P\subset P_2}(-1)^{\dim\aaa_P}\sum_{\nu\in N_3^P(\Q)}\int_{N_P(\A)}f(g^{-1}\nu ng)\ dn}\le\\
\level (K)^s
e^{-\sprod{\smofrts 32}T}e^{-\sprod{(\smofrts 32)_{P_1}}{\Ht_0(g)-T}}
\sum_{i=1}^m\int_{N_3(\A)}\abs{(f*X_i)(g^{-1}ng)}\ dn
\end{multline}
for any $f\in\funct(G(\A)^1;K)$ and $g\in G(\A)^1$ such that $F^1(g,T)\tau_1^2(H_{P_1}(g)-T)=1$.
\end{corollary}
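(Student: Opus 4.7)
My plan is to adapt the proof of Proposition~\ref{prop: altsumunipa} by first using Iwasawa decomposition to reduce to the case $g = a \in A_{P_1}^G$ and then running the alternating-sum argument with $P_3$ in place of $P_1$.

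Using the Iwasawa decomposition relative to $P_1$, I write $g = n_g m_g a k$, with $n_g \in N_{P_1}(\A)$, $m_g \in M_{P_1}(\A)^1$, $a \in A_{P_1}^G$, and $k \in \K$. The hypothesis $F^1(g, T) \tau_1^2(\Ht_{P_1}(g) - T) = 1$, combined with Arthur's reduction theory, allows me to assume (after replacing $g$ by $\gamma g$ for a suitable $\gamma \in P_1(\Q)$, an operation that leaves \eqref{eq: altsumunip} invariant because the $P_1(\Q)$-adjoint action preserves the semidirect decomposition $N_3 = N_3^P \ltimes N_P$ and induces a bijection on $N_3^P(\Q)$) that $n_g$ lies in a fixed compact fundamental domain for $N_{P_1}(\Q) \bs N_{P_1}(\A)$, that $m_g$ lies in a Siegel set of polynomial size in $\norm{T}$, and that $a$ satisfies $\tau_1^2(\Ht_{P_1}(a) - T) = 1$. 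I then absorb $k$, $m_g$, and $n_g$ into the seminorm on $f$ using Lemma~\ref{lem: sob}(\ref{part: compact conjugation}) and a measure-preserving change of variables on $N_P(\A)$, valid because $N_P$ is normal in $N_1$ (since $P_1 \subset P_3 \subset P$). The combined effect contributes a factor $\level(K)^s(1+\norm{T})^c$ to the implicit constant, the polynomial-in-$\norm{T}$ part of which is absorbed into the exponential decay in $T$.

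Reduced to $g = a \in A_{P_1}^G$ with $\tau_1^2(\Ht_{P_1}(a) - T) = 1$, I apply the argument of Proposition~\ref{prop: altsumunipa} with $P_3$ in place of $P_1$; Lemmas~\ref{lem: altsumvs}, \ref{lem: convertliegp}, and \ref{lem: fundom} transfer without change. After the intermediate reduction to $P_2 = G$ via integration over $N_{P_2}$, the roots $\beta$ arising in the derivative-to-Lie-algebra conversion have support contained in $\Delta_0^{P_2}$, so their $\aaa_{P_1}$-projections are non-negative combinations of $\Delta_{P_1}^{P_2}$, which the hypothesis $\tau_1^2$ controls from below. This yields an intermediate bound with exponential factor $e^{-\sprod{\smofrts 32}{\Ht_{P_1}(a)}} = e^{-\sprod{(\smofrts 32)_{P_1}}{\Ht_0(g)}}$. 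Rewriting via $\smofrts 32 = (\smofrts 32)_{P_1} + (\smofrts 32)^{P_1}$, and using that $(\alpha)^{P_1}$ is a non-positive combination of $\Delta_0^{P_1}$ for $\alpha \in \Delta_0 \setminus \Delta_0^{P_1}$ (so that $\sprod{(\smofrts 32)^{P_1}}T \le 0$ on the relevant support), this intermediate bound is dominated by the corollary's target $e^{-\sprod{\smofrts 32}T} e^{-\sprod{(\smofrts 32)_{P_1}}{\Ht_0(g) - T}}$.

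The main obstacle will be the careful justification of the reductions in the first step: the $P_1(\Q)$-invariance via the $N_3^P(\Q)$-bijection (using the semidirect product structure), the $n_g$-absorption via the normality of $N_P$ in $N_1$, and the tracking of $\level(K)^s$ and polynomial-in-$\norm{T}$ factors through all applications of Lemma~\ref{lem: sob}. The adaptation of the alternating-sum argument in the second step should be largely formal once these reductions are in place.
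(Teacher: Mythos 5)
Your overall strategy — reduce to $g$ in a Siegel-type region and then invoke Proposition~\ref{prop: altsumunipa} with $P_3$ playing the role of $P_1$ — is the same as the paper's, but your choice of Iwasawa decomposition relative to $P_1$ creates a genuine gap. The paper places $g$ in a Siegel set for $G$ (relative to $P_0$), writing $g = namk$ with $a$ in the \emph{full} torus $A_0$ satisfying $\tau_0^1(\Ht_0(a)-T_1)=1$, $m$ in a \emph{fixed, $T$-independent} compact subset of $M_0(\A)^1$, and $n$ in a fixed compact subset of $N_0^2(\A)$. Crucially, the condition $F^1(g,T)\tau_1^2(\Ht_{P_1}(g)-T)=1$ combined with the Siegel set condition then forces $\tau_0^2(\Ht_0(a)-T_1)=1$ (via \eqref{eq: htp}), which in turn shows $a^{-1}na$ stays in a fixed compact set independent of $T$ and $K$, so Lemma~\ref{lem: sob}\eqref{part: compact conjugation} absorbs $a^{-1}na$, $m$, $k$ with $T$-independent constants. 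The cost of conjugating by $a$ itself is then handled precisely inside Proposition~\ref{prop: altsumunipa}, and the exponential factor $e^{-\sprod{\smofrts 32}{\Ht_0(a)}}$ is converted to the desired bound by the numerical inequality $\sprod{\smofrts 32}{T}+\sprod{(\smofrts 32)_{P_1}}{\Ht_0(a)-T}\le\sprod{\smofrts 32}{\Ht_0(a)}$, again a consequence of \eqref{eq: htp}.

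By contrast, you split $g = n_g m_g a k$ with $a\in A_{P_1}^G$ and $m_g\in M_{P_1}(\A)^1$, thereby moving the non-compact $A_0^{P_1}$-direction into the $m_g$-component. The $F^1$-condition only confines $m_g$ to a Siegel set of $M_{P_1}$ with $\aaa_0^{P_1}$-height bounded by $\norm{T}$. Absorbing $m_g$ via Lemma~\ref{lem: sob}\eqref{part: compact conjugation} then produces constants governed by the operator norm of $\Ad(m_g^{-1})$ on $\univ(\nnn_3)$, and since the $\aaa_0^{P_1}$-projection $\beta^{P_1}$ of a root $\beta\in\roots_3$ is in general a \emph{negative} combination of $\Delta_0^{P_1}$, this operator norm grows \emph{exponentially}, not polynomially, in $\norm{T}$. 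The claimed factor $(1+\norm{T})^c$ is therefore wrong, and even were it correct it could not be ``absorbed into the exponential decay'' without weakening the explicit exponent in \eqref{eq: altsumunip}, which is part of the statement. The fix is precisely the paper's: keep the $A_0^{P_1}$-coordinate inside $a\in A_0$ (so that $\sprod{\alpha}{\Ht_0(a)}$ is bounded below for \emph{all} $\alpha\in\Delta_0$, making $|\beta(a^{-1})|\ll 1$ for all $\beta\in\roots_P$), and invoke \eqref{eq: htp} both to get $\tau_0^2(\Ht_0(a)-T_1)=1$ and to convert the exponential factor.
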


\begin{proof}
We follow the argument of \cite{MR518111} and \cite[\S3]{MR828844}.
As a function of $g$, both sides of \eqref{eq: altsumunip} are left $P_1(\Q)N_2(\A)$-invariant.
Hence, we may assume that $g$ is of the form $g=namk$, where $k\in\K$, and $a\in A_0$ satisfies
\begin{equation} \label{eq: siegelP1}
\tau_0^1(\Ht_0(a)-T_1)=1,
\end{equation}
$n$ is in a fixed compact subset of $N_0^2(\A)$ and $m$ is in a fixed compact subset of $M_0(\A)^1$.
As explained in \cite[pp.~943--944]{MR518111}, the condition $F^1(g,T)\tau_1^2(H_{P_1}(g)-T)=1$ implies that
\begin{equation} \label{eq: htp}
\sprod{\alpha}{\Ht_0(a)-T}=\sprod{\alpha_{P_1}}{\Ht_0(a)-T}+\sprod{\alpha-\alpha_{P_1}}{\Ht_0(a)-T}\ge
\sprod{\alpha_{P_1}}{\Ht_0(a)-T}>0
\end{equation}
for all $\alpha\in\Delta_0^2\setminus\Delta_0^1$.
Hence, by \eqref{eq: siegelP1}, $\tau_0^2(\Ht_0(a)-T_1)=1$ and $a^{-1}na$ lies in a fixed compact subset of $N_0^2(\A)$ that is independent of $T$ (and $K$).
Therefore, by Lemma \ref{lem: sob}, part \ref{part: compact conjugation}, we may assume that $g=a$.
This case follows from Proposition \ref{prop: altsumunipa} (with $P_1=P_3$) since by \eqref{eq: htp} we have
\[
\sprod{\smofrts 32}{T}+\sprod{(\smofrts 32)_{P_1}}{\Ht_0(a)-T}\le\sprod{\smofrts 32}{\Ht_0(a)}.
\]
The corollary follows. Note that $K \cap N_3 (\A_f)$ contains
$\fund_{P_3,\fin}(\prmlvl)$ for $\prmlvl = \prmlvl_1 \level (K)$, where
$\prmlvl_1$ is a positive integer depending only on $G$ and the
representation $r_0$ fixed in \S \ref{subsectionlevel}.
\end{proof}

\subsection{}
For the application in the next section, we need another auxiliary result. As in \cite[\S6]{MR518111} let $\sigma_1^2$ be the function
\[
\sigma_1^2=\sum_{P_1\subset P_3\subset P_2}(-1)^{\dim\aaa_3^2}\tau_1^3\hat\tau_3
\]
on $\aaa_0$. This function is described in [ibid., Lemma 6.1 and Corollary 6.2].
In particular, $\sigma_1^2$ is the characteristic function of a certain subset
of $\aaa_0$,
and if $\sigma_1^2(H)=1$ then $\tau_1^2(H)=1$ and $\norm{H}\le c(1+\norm{H_1^2})$, where $H_1^2$ denotes the projection of $H$ to $\aaa_1^2$
and the constant $c$ depends only on $G$.

\begin{lemma} \label{lem: integm1'}
Let $P_1\subset P_3\subset P_2$ be standard parabolic subgroups.
Then for any $X\in\aaa_1^3$ we have
\begin{equation} \label{eq: amp1'}
\int_{\aaa_3}\sigma_1^2(X+X_1-T)e^{-\sprod{(\smofrts 32)_{P_1}}{X+X_1-T}}\ dX_1
\ll(1+\norm{X-T_{P_1}^{P_3}})^{\dim\aaa_2}\tau_1^3(X-T).
\end{equation}
\end{lemma}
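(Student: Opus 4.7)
The plan is to decompose the integration variable $X_1 \in \aaa_3$ via $\aaa_3 = \aaa_3^2 \oplus \aaa_2$ as $X_1 = Z' + Z''$, set $U = X - T_{P_1}^{P_3} \in \aaa_1^3$, and exploit two features of the integrand. First, both $\sigma_1^2$ and the exponential depend only on the $\aaa_1$-component of $X + X_1 - T$; after absorbing the $\aaa_3$-part of $T$ into $X_1$, this component takes the form $U + Z' + Z''$. Second, since $\smofrts 32 = \sum_{\alpha \in \Delta_0^2 \setminus \Delta_0^3}\alpha$ and each such simple root vanishes on $\aaa_2$, the functional $(\smofrts 32)_{P_1} \in \aaa_1^*$ annihilates $\aaa_2$, so the exponential depends on $Z'$ but not on $Z''$. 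Fubini then splits the integral over $\aaa_3$ as an $\aaa_2$-integral inside an $\aaa_3^2$-integral.

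For the inner integral over $Z''$, the property $\norm{H} \le c(1 + \norm{H_1^2})$ on the support of $\sigma_1^2$ (recalled immediately before the lemma) confines $Z''$ to a ball of radius $O(1+\norm{U+Z'})$, so the inner integral is bounded by $C(1+\norm{U+Z'})^{\dim\aaa_2}$; moreover, its non-emptiness forces $\tau_1^2(U+Z') = 1$. I would then decompose $\Delta_1^2 = \Delta_1^3 \sqcup (\Delta_1^2 \setminus \Delta_1^3)$, noting that elements of $\Delta_1^3$ lie entirely in $(\aaa_1^3)^*$ and hence impose $\tau_1^3(U) = 1$ (precisely the factor needed on the right-hand side), while each $\alpha \in \Delta_1^2 \setminus \Delta_1^3$ splits as $\alpha = \alpha^{(3)} + \tilde\alpha$ with $\tilde\alpha$ ranging bijectively over the basis $\Delta_3^2$ of $(\aaa_3^2)^*$, and yields the constraint $\sprod{\tilde\alpha}{Z'} > -\sprod{\alpha^{(3)}}{U}$.

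Parametrizing $\aaa_3^2$ by the positive coordinates $s_\alpha = \sprod{\tilde\alpha}{Z'} + \sprod{\alpha^{(3)}}{U}$, and using the identity $(\smofrts 32)_{P_1} = \sum_{\alpha \in \Delta_1^2 \setminus \Delta_1^3}(\alpha^{(3)} + \tilde\alpha)$, a direct computation shows that the $U$-dependent contributions to the exponent cancel, reducing the exponential to $e^{-\sum_\alpha s_\alpha}$. With $(1+\norm{U+Z'})^{\dim\aaa_2} \ll (1+\norm{U})^{\dim\aaa_2}(1+\norm{s})^{\dim\aaa_2}$ (since $\norm{Z'}$ is controlled by $\norm{s}$ together with $\norm{U}$), the residual integral $\int_{s>0}(1+\norm{s})^{\dim\aaa_2}e^{-\sum s_\alpha}\,ds$ is a finite constant, giving the desired estimate. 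The main obstacle is the bookkeeping of the three-fold subspace decomposition $\aaa_1 = \aaa_1^3 \oplus \aaa_3^2 \oplus \aaa_2$ and its dual, and verifying the cancellation of the $U$-terms in the exponent.
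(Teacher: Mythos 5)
Your argument is correct and follows essentially the same route as the paper's proof: both first apply Fubini to the decomposition $\aaa_3 = \aaa_3^2 \oplus \aaa_2$, bound the inner $\aaa_2$-integral by $(1+\norm{X+X_1-T_{P_1}^{P_2}})^{\dim\aaa_2}\tau_1^2(X+X_1-T)$ using the support property of $\sigma_1^2$, and then change variables in the outer $\aaa_3^2$-integral to the coordinates $t_\alpha = \sprod{\alpha}{X+X_1-T}$, $\alpha \in \Delta_1^2\setminus\Delta_1^3$, which are exactly your $s_\alpha$. The only cosmetic difference is that the paper works directly with the linear forms $t_\beta$, $\beta\in\Delta_1^2$, rather than explicitly splitting each $\alpha$ as $\alpha^{(3)}+\tilde\alpha$, but the resulting exponential $e^{-\sum_{\alpha\in\Delta'}t_\alpha}$ and polynomial factorization are identical to yours.
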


\begin{proof}
We write the left-hand side as
\[
\int_{\aaa_3^2}e^{-\sprod{(\smofrts 32)_{P_1}}{X+X_1-T}}\left(\int_{\aaa_2}\sigma_1^2(X+X_1+X_2-T)\ dX_2\right)\ dX_1.
\]
By the above-mentioned property of $\sigma_1^2$, the inner integral is bounded by a constant multiple of
\[
\tau_1^2(X+X_1-T)(1+\norm{X+X_1-T_{P_1}^{P_2}})^{\dim\aaa_2}.
\]
Let $t_\beta=\sprod{\beta}{X+X_1-T}$, $\beta\in\Delta_1^2$. In particular, $t_\beta=\sprod{\beta}{X-T}$ for $\beta\in\Delta_1^3$.
The condition $\tau_1^2(X+X_1-T)=1$ means that $t_\beta>0$ for all $\beta\in\Delta_1^2$, and in particular
implies $\tau_1^3(X-T)=1$. For convenience let $\Delta'=\Delta_1^2\setminus\Delta_1^3$.
Thus, the left-hand side of \eqref{eq: amp1'} is
\begin{multline*}
\ll\tau_1^3(X-T)\int_{\aaa_3^2}\prod_{\beta\in\Delta'}{\bf 1}_{>0}(t_\beta)
e^{-\sum_{\beta\in\Delta'}t_\beta}(1+\sum_{\beta\in\Delta_1^2}\abs{t_\beta})^{\dim\aaa_2}\ dX_1\le\\
(1+\sum_{\beta\in\Delta_1^3}\abs{t_\beta})^{\dim\aaa_2}
\tau_1^3(X-T)\int_{\aaa_3^2}\prod_{\beta\in\Delta'}{\bf 1}_{>0}(t_\beta)
e^{-\sum_{\beta\in\Delta'}t_\beta}(1+\sum_{\beta\in\Delta'}t_\beta)^{\dim\aaa_2}\ dX_1,
\end{multline*}
where ${\bf 1}_{>0}$ is the characteristic function of the positive reals.
The last integral converges since we can replace the integration variable $X_1$ by $t_\beta$, $\beta\in\Delta'$.
The lemma follows.
\end{proof}

\begin{remark} \label{rem: xitxi}
We may obviously replace $(\xi^2_3)_{P_1}$ here by any positive
multiple and obtain the same estimate (changing only the implicit constant).
\end{remark}

\section{Continuity of the coarse geometric expansion} \label{sec: contcoarse}
We now prove the continuity of the coarse geometric expansion
of Arthur's trace formula \cite{MR518111}. We first recall Arthur's derivation of this expansion.
Let for the time being $f\in C_c^\infty(G(\A)^1)$.
For any $T\in\aaa_0$ with $d(T)>d_0$ let $k^T(\cdot,f)$ be Arthur's modified kernel
\begin{equation} \label{eqnkT}
k^T(x,f)=\sum_{P\supset P_0}(-1)^{\dim\aaa_P}\sum_{\delta\in P(\Q)\bs G(\Q)}k_P(\delta x)\hat\tau_P(H_P(\delta x)-T)
\end{equation}
with
\[
k_P(x)=\sum_{\gamma\in M_P(\Q)}\int_{N_P(\A)}f(x^{-1}\gamma nx)\ dn.
\]
The inner sum in \eqref{eqnkT} has only finitely many non-zero terms (and the possible values for $\delta$ depend only on $x$, not on $f$).
Let
\[
J^T(f)=\int_{G(\Q)\bs G(\A)^1}k^T(x)\ dx.
\]
Arthur shows that this integral is absolutely convergent for all $T$ with $d(T)$ large enough, the bound depending on the support of $f$.

Following \cite[\S7]{MR518111}, we can invoke Arthur's partition lemma \eqref{eq: Arpar} to rewrite \eqref{eqnkT} as
\[
k^T(x)=\sum_{P_1\subset P_2}\sum_{\delta\in P_1(\Q)\bs G(\Q)}F^1(\delta x,T)\sigma_1^2(H_{P_1}(\delta x)-T)k_{1,2}(\delta x),
\]
where
\[
k_{1,2}(x)=\sum_{P:P_1\subset P\subset P_2}(-1)^{\dim\aaa_P}k_P(x).
\]


For $\gamma_1,\gamma_2\in G(\Q)$ we write $\gamma_1\simw\gamma_2$ if the semisimple parts of $\gamma_1$ and $\gamma_2$ are conjugate
in $G(\Q)$. The equivalence classes of $\simw$ are called coarse classes.
Let $\cOrb$ be the set of all coarse classes.
Each $\corb\in\cOrb$ contains a unique semisimple conjugacy class of $G(\Q)$.
For any $\corb\in\cOrb$ Arthur sets
\[
k_{\corb}(x)=\sum_{\gamma\in\corb}f(x^{-1}\gamma x)
\]
and
\[
k^T_{\corb}(x)=\sum_{P\supset P_0}(-1)^{\dim\aaa_P}\sum_{\delta\in P(\Q)\bs G(\Q)}k_{\corb,P}(\delta x)\hat\tau_P(H_P(\delta x)-T)
\]
with
\[
k_{\corb,P}(x)=\sum_{\gamma\in M_P(\Q)\cap\corb}\int_{N_P(\A)}f(x^{-1}\gamma nx)\ dn.
\]
A basic fact \cite[p. 923]{MR518111} is that
\begin{equation} \label{eq: lp}
\corb\cap P(\Q)=(\corb\cap M(\Q))N(\Q).
\end{equation}

Clearly, we have
\[
k^T(x)=\sum_{\corb\in\cOrb}k^T_{\corb}(x).
\]
The integrals
\[
J^T_\corb(f)=\int_{G(\Q)\bs G(\A)^1}k^T_\corb(x)\ dx
\]
are again absolutely convergent for $d(T)$ large enough (depending on the support of $f$) and
we have the decomposition $J^T (f) = \sum_{\corb\in\cOrb}J^T_{\corb}(f)$ for all such $T$.

Exactly as before, we can write
\begin{equation} \label{eq: expP12}
k^T_{\corb}(x)=\sum_{P_1\subset P_2}\sum_{\delta\in P_1(\Q)\bs G(\Q)}F^1(\delta x,T)\sigma_1^2(H_{P_1}(\delta x)-T)k_{\corb,1,2}(\delta x),
\end{equation}
where
\[
k_{\corb,1,2}(x)=\sum_{P:P_1\subset P\subset P_2}(-1)^{\dim\aaa_P}k_{\corb,P}(x).
\]

We now extend Arthur's convergence results as follows.

\begin{theorem} \label{thm: maincoarse}
\begin{enumerate}
\item For any $f\in\funct(G(\A)^1;K)$, $\corb\in\cOrb$ and $T
\in\aaa_0$ such that $d(T)>d_0$, the integrals defining $J^T(f)$ and $J^T_{\corb}(f)$ are absolutely convergent.
\item \label{part: polynomial}
$J^T(f)$ and $J^T_{\corb}(f)$ are polynomials in $T$ of degree $\le\dim\aaa_0$ whose coefficients are continuous linear forms in $f$.
\item \label{part: estimcoarse}
There exist $r\ge0$ and a continuous seminorm $\mu$ on $\funct(G(\A)^1;K)$ such that
\begin{multline*}
\sum_{\corb\in\cOrb}\abs{\int_{G(\Q)\bs G(\A)^1_{\le T}}k_{\corb}(x)\ dx-J^T_{\corb}(f)}\le
\sum_{\corb\in\cOrb}\int_{G(\Q)\bs G(\A)^1}\abs{F(x,T)k_{\corb}(x)-k^T_{\corb}(x)}\ dx\\\le
\mu(f)(1+\norm{T})^re^{-d(T)}
\end{multline*}
for any $f\in\funct(G(\A)^1;K)$ and $T$ such that $d(T)>d_0$.
\item $J^T(f)=\sum_{\corb\in\cOrb}J^T_{\corb}(f)$.
\end{enumerate}
In addition, the absolute values of the coefficients in part \ref{part: polynomial} and the
seminorm $\mu$ of part \ref{part: estimcoarse} can be bounded by
$c \level (K)^s \norm{\cdot}_{G(\A)^1,\, t}$ for
a constant $c$ and
positive integers $s, t$ that do not depend on $K$.
\end{theorem}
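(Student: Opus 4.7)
The plan is to adapt Arthur's original argument \cite{MR518111} for the convergence of the coarse geometric expansion, combining the expansion \eqref{eq: expP12} with the quantitative estimates of Sections \ref{sec: basic} and \ref{sec: auxil}. Since the $(P_1, P_2) = (G, G)$ term in \eqref{eq: expP12} equals $F(x, T) k_{\corb}(x)$, the difference $F(x, T) k_{\corb}(x) - k^T_{\corb}(x)$ is exactly the negative of the remaining contributions. Consequently, parts (1), (3), and (4) of the theorem all reduce to bounding
\[
\sum_{(P_1, P_2) \neq (G, G)} \int_{P_1(\Q) \bs G(\A)^1} F^1(x, T) \sigma_1^2(\Ht_{P_1}(x) - T) \sum_{\corb \in \cOrb} \abs{k_{\corb, 1, 2}(x)} \, dx
\]
by an expression of the form $c \level(K)^s \norm{f}_{G(\A)^1, t} (1 + \norm{T})^r e^{-d(T)}$; this simultaneously yields absolute convergence, the approximation estimate, and the decomposition identity (via monotone convergence).

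Reducing the outer integral to one over a Siegel set of $P_1$ via \eqref{eq: bndsiegel}, the main step is to estimate $\sum_{\corb} \abs{k_{\corb, 1, 2}(x)}$ pointwise on this set. For each $P_1 \subset P \subset P_2$ one unfolds the sum over $\gamma \in M_P(\Q) \cap \corb$ in $k_{\corb, P}(x)$ according to the Bruhat decomposition of $M_P$ relative to the parabolic $P_1 \cap M_P = M_1 \ltimes N_1^P$. Non-identity Bruhat cells are controlled by Theorem \ref{thm: analog} applied inside $M_P$, which already furnishes the requisite exponential decay. The identity cell, consisting of elements of $(P_1 \cap M_P)(\Q)$, is handled by applying \eqref{eq: lp} inside $M_P$: this factorizes $\corb \cap (P_1 \cap M_P)(\Q) = (\corb \cap M_1(\Q)) N_1^P(\Q)$, so that summing over $\corb$ while bringing the absolute value inside the $\gamma_0$-summation produces an expression of the form
\[
\sum_{\gamma_0 \in M_1(\Q)} \abs{ \sum_{P : P_1 \subset P \subset P_2} (-1)^{\dim \aaa_P} \sum_{\nu \in N_1^P(\Q)} \int_{N_P(\A)} f(x^{-1} \gamma_0 \nu n x) \, dn },
\]
to which Corollary \ref{cor: altsumunip} (with $P_3 = P_1$, applied after left-translation of $f$ by $\gamma_0$) yields the key factor $\level(K)^s e^{-\sprod{\smofrts 12}{T}} e^{-\sprod{(\smofrts 12)_{P_1}}{\Ht_0(x) - T}}$ together with a sum over derivatives $f * X_i$.

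Integration of this pointwise bound over the Siegel set then proceeds in stages, in parallel with Section \ref{sec: basic}: the $M_1(\Q) \bs M_1(\A)^1$-direction is handled by Theorem \ref{thm: analog} applied to $M_1$, using part \ref{part: consterm} of Lemma \ref{lem: sob} to absorb the inner integration over $N_1(\A)$ into a constant term; the $\aaa_1^2$-direction, together with the $\sigma_1^2$-factor, is controlled by Lemma \ref{lem: integm1'} (see Remark \ref{rem: xitxi}), producing a polynomial factor in $\norm{T}$; and the $\aaa_2$-integration contributes a further polynomial factor of bounded degree. Since $(P_1, P_2) \neq (G, G)$ forces $\Delta_0^2 \setminus \Delta_0^1 \neq \emptyset$, we have $\sprod{\smofrts 12}{T} \ge d(T)$, yielding the required $e^{-d(T)}$-decay. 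The polynomial dependence on $\level(K)$ is tracked through parts \ref{part: ltcsum} and \ref{part: pos} of Lemma \ref{lem: sob} and through Corollary \ref{cor: altsumunip}. Finally, part (2) -- the polynomial nature of $J^T$ and $J^T_{\corb}$ in $T$ of degree $\le \dim \aaa_0$ together with continuity of the coefficients -- follows by Arthur's standard combinatorial argument \cite[\S2]{MR625344} once uniform absolute convergence in $T$ has been established.

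The main obstacle is the Bruhat-theoretic step: correctly identifying that the identity Bruhat cell of $M_P$ relative to $P_1 \cap M_P$ is precisely the one in which cancellation among the alternating $P$-sum is not already provided by Theorem \ref{thm: analog}, so that Corollary \ref{cor: altsumunip} is applicable to the residual $\gamma_0$-indexed terms, and verifying that the resulting level dependence remains polynomial after both the $\gamma_0$-summation (via Theorem \ref{thm: analog} on $M_1$) and the subsequent $\aaa_0$-integrations.
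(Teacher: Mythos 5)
Your plan matches the paper's proof in overall architecture (expand \eqref{eq: expP12}, isolate $(P_1,P_2)=(G,G)$, bound the remainder, then deduce absolute convergence and the polynomial/extrapolation statement via Arthur's argument), but the central combinatorial step is organized incorrectly, and as written it leaves a genuine divergence.

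The issue is your dichotomy ``identity Bruhat cell of $M_P$ versus all non-identity cells.'' Following \cite[\S7]{MR518111}, the paper introduces a \emph{third} standard parabolic $P_3$ and decomposes each $\gamma \in M_P(\Q) \cap \corb$ according to the \emph{smallest} standard parabolic $P_3$ with $P_1 \subset P_3 \subset P$ and $\gamma \in P_3(\Q)$; by \eqref{eq: lp} one then writes $\gamma = \eta\nu$ with $\eta \in M_3(\Q)\rg{1}$ and $\nu \in N_3^P(\Q)$. After exchanging the sums over $P$ and $P_3$ one obtains
$k_{\corb,1,2}(x) = \sum_{P_1 \subset P_3 \subset P_2} k_{\corb,1,2;3}(x)$,
and the crucial point is that the alternating sum over $P$ \emph{with $P$ ranging from $P_3$ to $P_2$} is kept inside the absolute value for each $P_3$. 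For $P_1 \subsetneq P_3 \subsetneq P_2$ this alternating sum is essential: Corollary \ref{cor: altsumunip} with parabolic $P_3$ produces the factor $e^{-\sprod{(\smofrts 32)_{P_1}}{\Ht_0(x)-T}}$, and this factor is exactly what makes the $\aaa_3^2$-direction of the subsequent Iwasawa integral converge (via Lemma \ref{lem: integm1'}). Your plan applies the alternating-sum cancellation only to the $P_3 = P_1$ case (your ``identity cell'') and puts absolute values inside the $P$-sum for everything else.

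Concretely, your claim that ``non-identity Bruhat cells are controlled by Theorem \ref{thm: analog} applied inside $M_P$'' is incorrect on two counts. First, Theorem \ref{thm: analog} (applied to $M_P$ with parabolic $P_1 \cap M_P$) bounds the sum over the \emph{top} cell $M_P(\Q)\rg{P_1\cap M_P}$ only; it says nothing about $\gamma$ lying in an intermediate parabolic $P'$ with $P_1 \cap M_P \subsetneq P' \subsetneq M_P$. Second, even for the top cell with $P \subsetneq P_2$, the remaining $A_{M_P}$-integral over $\aaa_P = \aaa_P^2 \oplus \aaa_2$ diverges: the set $\sigma_1^2(H)=1$ restricts the $\aaa_2$-component of $H$ to a region bounded in terms of $\norm{H_1^2}$ but is unbounded in the $\aaa_P^2$-direction (which lies inside $\aaa_1^2$), so without the alternating-sum cancellation you have an infinite integral. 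Only when $P = P_2$ (i.e., $\aaa_P^2 = 0$) does the top-cell contribution converge without cancellation, and this is precisely the paper's $P_3 = P_2$ term. What is needed, and what you omit, is to keep the alternating sum over $P \in [P_3, P_2]$ for every $P_3$, invoke Corollary \ref{cor: altsumunip} with parabolic $P_3$ to get $e^{-\sprod{\smofrts 32}{T}}e^{-\sprod{(\smofrts 32)_{P_1}}{\Ht_0(x)-T}}$, and combine this with Theorem \ref{thm: analog} applied to $M_3$ (giving $e^{-\sprod{\smofrts 13}{T}}$); the product then yields $e^{-\sprod{\smofrts 12}{T}} \le e^{-d(T)}$.

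(Also a terminological point: the paper's decomposition is not the Bruhat decomposition of $M_P(\Q)$; it is the decomposition by the smallest standard parabolic containing $\gamma$, which groups Bruhat cells differently and is what makes \eqref{eq: lp} applicable.)
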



\begin{proof}
First note that each $k_P(x)$, and hence the modified kernel $k^T(f)$, is well-defined for any $f\in\funct(G(\A)^1;K)$ by Lemma \ref{lem: sob}.
Also, by \eqref{eq: P=G} there exist $r\ge0$ and a continuous seminorm $\mu$ on $\funct(G(\A)^1;K)$ such that
\[
\sum_{\corb\in\cOrb}\int_{G(\Q)\bs G(\A)^1_{\le T}}\abs{k_{\corb}(x)}\ dx\le\mu(f)(1+\norm{T})^r
\]
for any $f\in\funct(G(\A)^1;K)$ and $T$ such that $d(T)>d_0$.
It follows that part \ref{part: estimcoarse} implies the convergence of $J^T(f)$ and $J^T_{\corb}(f)$, as well as the
relation $J^T(f)=\sum_{\corb\in\cOrb}J^T_{\corb}(f)$.
Moreover, Arthur's argument in \cite[\S2]{MR625344} shows that part \ref{part: polynomial} is valid for any $f$ for which $J^T(f)$
(or $J^T_{\corb}(f)$) is absolutely convergent.

Thus, it remains to prove part \ref{part: estimcoarse}.
Following \cite[\S7]{MR518111} (and recalling \eqref{eq: lp}) we can write
\[
k_{\corb,1,2}(x)=\sum_{P_1\subset P_3\subset P_2}k_{\corb,1,2;3}(x),
\]
where
\[
k_{\corb,1,2;3}(x)=\sum_{\eta\in M_3(\Q)\rg{1}\cap\corb}\sum_{P:P_3\subset P\subset P_2}(-1)^{\dim\aaa_P}
\sum_{\nu\in N_3^P(\Q)}\int_{N_P(\A)}f(x^{-1}\eta\nu nx)\ dn
\]
with the definition
\[
M_3(\Q)\rg{1}=M_3(\Q)\setminus\cup_{P:P_1\subset P\subsetneq P_3}P(\Q).
\]

As a side remark, we note that Arthur shows in [ibid., pp. 943--944], that for compactly supported $f$ only the terms with $P_3=P_1$ contribute, provided that $T$ is large with respect to the support of $f$.

The function $k_{\corb,1,2;3}$ is left $P_1(\Q)N_2(\A)$-invariant, since the set $M_3(\Q)\rg{1}$ is invariant under conjugation
by $P_1(\Q)$.
Recall the decomposition \eqref{eq: expP12} of $k_\corb^T (x)$,
and observe that $\sigma_P^P$ vanishes identically unless $P=G$ and that the contribution from $P_1=P_2=G$ to
\eqref{eq: expP12} is simply $F(x,T)k_{\corb}(x)$.
In order to prove part \ref{part: estimcoarse}, it therefore
suffices to
show that there exist $r\ge0$ and a continuous seminorm $\mu$ on $\funct(G(\A)^1;K)$ such that for any triplet
$P_1\subset P_3\subset P_2$ with $P_1\neq P_2$ we have
\[
\sum_{\corb}\int_{P_1(\Q)\bs G(\A)^1}F^1(x,T)\sigma_1^2(H_{P_1}(x)-T)\abs{k_{\corb,1,2;3}(x)}\ dx
\le \mu(f)(1+\norm{T})^re^{-\sprod{\smofrts 12}T}.
\]
By Corollary \ref{cor: altsumunip} (applied to suitable left translates of $f$), we have
\[
\abs{k_{\corb,1,2;3}(x)}\le
e^{-\sprod{\smofrts 32}T}e^{-\sprod{(\smofrts 32)_{P_1}}{\Ht_0(x)-T}}
\sum_i\sum_{\eta\in M_3(\Q)\rg{1}\cap\corb}\int_{N_3(\A)}\abs{(f*X_i) (x^{-1}\eta nx)}\ dn,
\]
provided that $F^1(x,T)\sigma_1^2(H_{P_1}(x)-T)=1$.
Since $\smofrts 12 = \smofrts 13 + \smofrts 32$, it remains to show that
\begin{multline*}
\int_{P_1(\Q)\bs G(\A)^1}F^1(x,T)\sigma_1^2(H_{P_1}(x)-T)e^{-\sprod{(\smofrts 32)_{P_1}}{\Ht_0(x)-T}}
\sum_{\eta\in M_3(\Q)\rg{1}}\int_{N_3(\A)}\abs{f(x^{-1}\eta nx)}\ dn\ dx\\
\le \mu(f)(1+\norm{T})^re^{-\sprod{\smofrts 13}T}
\end{multline*}
with a suitable continuous seminorm $\mu$.
At this point we note that using Lemma \ref{lem: sob}, part \ref{part: pos}, we may assume without loss of generality that $f\ge0$ and $K=\K_{\fin}$.
Using the Iwasawa decomposition with respect to $P_3$, we need to estimate
\[
\int_{P_1^3(\Q)\bs M_3(\A)\cap G(\A)^1}e^{-\sprod{(\smofrts 32)_{P_1}}{\Ht_0(x)-T}}F^1(x,T)\sigma_1^2(H_{P_1}(x)-T)
\sum_{\eta\in M_3(\Q)\rg{1}}f_{P_3}(x^{-1}\eta x)\ dx,
\]
where $P_1^3=P_1\cap M_3$, and $f_{P_3}$ is as in
\eqref{eq: f_P}.
Splitting $M_3(\A)\cap G(\A)^1$ as the direct product of $A_{M_3}$ and $M_3(\A)^1$, we may
estimate the integral over $A_{M_3}$ using Lemma \ref{lem: integm1'}. The above integral is then majorized by
a constant multiple of
\[
\int_{P_1^3(\Q)\bs M_3(\A)^1}F^1(x,T)(1+\norm{\Ht_{P_1}(x)-T_{P_1}^{P_3}})^{\dim\aaa_2}\tau_1^3(H_{P_1}(x)-T)
\sum_{\eta\in M_3(\Q)\rg{1}}f_{P_3}(x^{-1}\eta x)\ dx.
\]
We can now appeal to Theorem \ref{thm: analog} (with $G=M_3$ and $P=P_1^3$) and Lemma \ref{lem: sob}, part \ref{part: consterm}, to finish the proof.
\end{proof}

\section{Modifications for the finer classes} \label{sec: modif}
In this section we will fine-tune the results of \S\ref{sec: auxil} to adapt the continuity argument to the decomposition of the trace formula
with respect to the equivalence relation $\sim$ of the introduction instead of $\sim_w$. The goal is to prove Corollary
\ref{cor: altsumorb} below, which is the main technical prerequisite for the continuity argument in \S\ref{sec: main}.

\subsection{}
We first go back to the situation of Lemmas \ref{lem: altsumvscls} and \ref{lem: altsumvs}.
Namely, $I$ is a finite set and $I_j$, $j\in J$, is a family of (not necessarily disjoint) non-empty subsets of $I$.
We also have an integer parameter $\prmlvl\ge1$.
Recall that for any $J'\subset J$ we denote by $[0,\prmlvl]^I_{J'}$
the face of $[0,\prmlvl]^I$
consisting of the vectors whose support is contained in $\cup_{j\notin J'}I_j$.

We say that a family $\FFF$ of subsets of $J$ is \emph{monotone} if
whenever $J'\in\FFF$ and $J'\subset J''\subset J$, we also have $J''\in\FFF$.

\begin{lemma} \label{lem: altsumvs2}
Let $\FFF$ be a monotone family of subsets of $J$ and
let $\FFF_{\min}$ be the set of minimal elements of $\FFF$ with respect to inclusion.
Then
there exist coefficients $d_{I'} \in \Z$, depending only on
$\FFF$, such that
for any $f\in C^{\abs{I}}([0,\prmlvl]^I)$ we have
\begin{equation} \label{eq: sumlac}
\sum_{J'\in\FFF}(-1)^{\abs{J\setminus J'}}\int_{[0,\prmlvl]^I_{J'}}f(x)\ dx=
\sideset{}{''}\sum_{I'}d_{I'}\int_{[0,\prmlvl]^I}\frac{\partial^{\abs{I'}}f}{\prod_{i\in I'}\partial x_i}(x)\prod_{i\in I'}(x_i-\prmlvl)\ dx,
\end{equation}
where the sum is over all $I'\subset I$ such that
$I'\cap I_j\ne\emptyset$ for every $j\in J\setminus\cup\FFF_{\min}$.
\end{lemma}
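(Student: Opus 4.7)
The plan is to derive Lemma \ref{lem: altsumvs2} from the general identity \eqref{eq: gencoefip} by a direct combinatorial computation, refining the argument used for Lemma \ref{lem: altsumvscls}. First I would identify $[0,\prmlvl]^I_{J'}$ with the face $[0,\prmlvl]^{I;\, I \setminus \cup_{j \notin J'} I_j}$ and regroup the LHS of \eqref{eq: sumlac} by face, obtaining the expression $\sum_{I' \subset I} c_{I'} \int_{[0,\prmlvl]^{I;I'}} f(x)\, dx$ with
\[
c_{I'} = \sum_{\substack{J' \in \FFF \\ \cup_{j \notin J'} I_j = I \setminus I'}} (-1)^{\abs{J \setminus J'}}.
\]
Applying \eqref{eq: gencoefip} immediately yields the RHS of \eqref{eq: sumlac} with $d_{I'} = \sum_{I'' \supset I'} c_{I''}$, and interchanging the two sums produces the closed form
\[
d_{I'} = \sum_{\substack{J' \in \FFF \\ K(I') \subset J'}} (-1)^{\abs{J \setminus J'}}, \qquad K(I') := \{j \in J : I_j \cap I' \neq \emptyset\}.
\]
This formula makes the dependence of $d_{I'}$ on $\FFF$ alone manifest.

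What remains — and this is where the refinement over Lemma \ref{lem: altsumvscls} really happens — is to show that $d_{I'} = 0$ for any $I'$ failing the condition $J \setminus \cup \FFF_{\min} \subset K(I')$. Suppose that $j_0 \in J \setminus K(I')$ also satisfies $j_0 \notin \cup \FFF_{\min}$. My key step is to exhibit a sign-reversing involution on the index set $\{J' \in \FFF : K(I') \subset J'\}$, namely $J' \mapsto J' \triangle \{j_0\}$. The condition $K(I') \subset J'$ is preserved because $j_0 \notin K(I')$, and the map visibly flips the parity of $\abs{J \setminus J'}$.

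The only non-trivial verification is that this map preserves membership in $\FFF$. In the direction where $j_0$ is adjoined, this is immediate from the upward-closure of $\FFF$. In the direction where $j_0$ is removed from some $J' \in \FFF$, I would pick any minimal element $J_{\min} \in \FFF_{\min}$ contained in $J'$; the hypothesis $j_0 \notin \cup \FFF_{\min}$ forces $j_0 \notin J_{\min}$, so $J_{\min} \subset J' \setminus \{j_0\}$, and a second application of upward-closure places $J' \setminus \{j_0\}$ in $\FFF$. Pairing terms in the defining sum for $d_{I'}$ via this involution then gives $d_{I'} = 0$, completing the proof. The only conceptual obstacle is spotting the involution and recognizing how $\FFF_{\min}$ must enter in the downward step; everything else is routine bookkeeping.
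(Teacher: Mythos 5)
Your proof is correct and follows essentially the same route as the paper's: both identify $[0,\prmlvl]^I_{J'}$ with a face, apply \eqref{eq: gencoefip} with the same $c_{I'}$, simplify to the closed form $d_{I'}=\sum_{J'\in\FFF:\,K(I')\subset J'}(-1)^{\abs{J\setminus J'}}$, and then prove the vanishing by pairing $J'$ with $J'\triangle\{j_0\}$ for a suitable $j_0\in J\setminus\cup\FFF_{\min}$ with $I_{j_0}\cap I'=\emptyset$. The paper phrases the cancellation as an ``iff'' statement rather than as a sign-reversing involution, but these are the same argument, and you have correctly isolated the one place where minimality enters (removing $j_0$ from $J'$ and invoking upward closure via a minimal subset avoiding $j_0$).
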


\begin{proof}
This is a special case of \eqref{eq: gencoefip} with
\[
c_{I'}=\sum_{J'\in\FFF:\cup_{j\notin J'}I_j=I\setminus I'}(-1)^{\abs{J_0\setminus J'}}.
\]
Note that
\[
d_{I'}=\sum_{I''\supset I'}c_{I''}=\sum_{J'\in\FFF:\cup_{j\notin J'}I_j\subset I\setminus I'}(-1)^{\abs{J_0\setminus J'}}=
\sum_{\{j\in J:I_j\cap I'\ne\emptyset\}\subset J'\in\FFF}(-1)^{\abs{J_0\setminus J'}}.
\]
Observe that if $j_0\in J\setminus\cup\FFF_{\min}$ then for any $J'\subset J$,
$J'\in\FFF$ if and only if $J'\cup\{j_0\}\in\FFF$.
Similarly, if $I'\cap I_{j_0}=\emptyset$ then for any $J'\subset J$,
$\{j\in J:I_j\cap I'\ne\emptyset\}\subset J'$ if and only if
$\{j\in J:I_j\cap I'\ne\emptyset\}\subset J'\cup\{j_0\}$.
Thus, $d_{I'}=0$ if there exists $j_0\in J\setminus\cup\FFF_{\min}$ such that $I'\cap I_{j_0}=\emptyset$,
as required. 
\end{proof}

Once again, an adelic version follows immediately.
\begin{corollary} \label{cor: altsumvs23}
Let $I$, $I_j$, $j \in J$, and $\FFF$ be as in
Lemma \ref{lem: altsumvs2} above.
Then we have (using the notation of Lemma \ref{lem: altsumvs})
\[
\abs{\sum_{J'\in\FFF}(-1)^{\abs{J'}}\int_{\fund(\prmlvl)^I_{J'}}f(x)\ dx}\ll_{\FFF}
\prmlvl^{\abs{I}}
\sideset{}{''}\sum_{I'}\int_{\fund(\prmlvl)^I}\abs{\frac{\partial^{\abs{I'}}f}{\prod_{i\in I'}\partial x_i}(x)}\ dx
\]
for any $f\in C^{\abs{I}}(\A^I;\fund_{\fin}(\prmlvl)^I)$.
\end{corollary}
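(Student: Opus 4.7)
The plan is to reduce the adelic inequality to the archimedean identity already established in Lemma \ref{lem: altsumvs2}. The key observation is that a function $f \in C^{\abs{I}}(\A^I;\fund_{\fin}(\prmlvl)^I)$ is constant on cosets of $\fund_{\fin}(\prmlvl)^I$ in the finite adelic directions, so once the measure normalization $\vol(\fund_{\fin}(\prmlvl)) = 1$ is accounted for, every adelic integral appearing in the statement reduces to an integral of the archimedean section $\tilde{f}(y) = f(y, 0_f) \in C^{\abs{I}}([0,\prmlvl]^I)$, where $(y, 0_f) \in \R^I \times \A_f^I$ embeds a Euclidean point with all finite components equal to zero. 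Concretely,
\[
\int_{\fund(\prmlvl)^I_{J'}} f(x)\ dx = \int_{[0,\prmlvl]^I_{J'}} \tilde{f}(y)\ dy,
\]
and the analogous identity holds with $f$ and $\tilde{f}$ replaced by their partial derivatives $\partial^{\abs{I'}}/\prod_{i \in I'}\partial x_i$ (these commute with setting the finite components to zero).

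Next, I would apply Lemma \ref{lem: altsumvs2} to $\tilde{f}$ in order to rewrite the alternating sum on the left as
\[
\sum_{J' \in \FFF} (-1)^{\abs{J \setminus J'}} \int_{[0,\prmlvl]^I_{J'}} \tilde{f}(y)\ dy = \sideset{}{''}\sum_{I'} d_{I'} \int_{[0,\prmlvl]^I} \frac{\partial^{\abs{I'}} \tilde{f}}{\prod_{i\in I'}\partial y_i}(y) \prod_{i \in I'} (y_i - \prmlvl)\ dy,
\]
with coefficients $d_{I'}$ that depend only on $\FFF$. The discrepancy between the sign $(-1)^{\abs{J'}}$ appearing in the statement and the sign $(-1)^{\abs{J \setminus J'}}$ used by Lemma \ref{lem: altsumvs2} is an overall factor of $(-1)^{\abs{J}}$, which disappears upon taking absolute values.

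Finally, bounding pointwise $\abs{y_i - \prmlvl} \le \prmlvl$ for $y \in [0,\prmlvl]^I$ gives $\prod_{i \in I'}\abs{y_i - \prmlvl} \le \prmlvl^{\abs{I'}} \le \prmlvl^{\abs{I}}$, and the coefficients satisfy $\abs{d_{I'}} \le 2^{\abs{J}}$, a constant depending only on $\FFF$. Passing the resulting estimate back through the adelic-to-Euclidean identification applied now to the derivatives of $f$, we arrive at the claimed inequality. There is no substantive obstacle: all the combinatorial work has been absorbed into Lemma \ref{lem: altsumvs2}, and the transfer from the Euclidean cube to adelic integrals is a routine application of $\fund_{\fin}(\prmlvl)^I$-invariance together with the volume normalization $\vol(\fund(\prmlvl)) = 1$.
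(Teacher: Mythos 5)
Your proof is correct and fills in exactly the routine reduction that the paper leaves implicit (the paper states the adelic version "follows immediately" from Lemma \ref{lem: altsumvs2} without further comment). The restriction $\tilde f(y) = f(y, 0_f)$, the normalization bookkeeping, and the bound $\abs{d_{I'}} \le \abs{\FFF}$ are all as intended.
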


We say that a non-empty family $\pars$ of parabolic subgroups of $G$ is \emph{monotone}
if whenever $Q\in\pars$ and $Q'\supset Q$, we also have $Q'\in\pars$. For a monotone family $\pars$
let $\pars_{\min}$ be the set of minimal elements of $\pars$ (with respect to inclusion)
and let $Q(\pars)$ be the parabolic subgroup generated by the elements of $\pars_{\min}$.

\begin{corollary} \label{cor: altsumunipa2}
Let $P$ be a standard parabolic subgroup.
Let $\pars$ be a monotone family of parabolic subgroups of $G$ which all contain $P$.
Then there exist an integer $s$ and $X_1,\dots,X_m\in\univ(\nnn)$
such that for any $\prmlvl$ divisible by $\prmlvl_0$ and $a\in A_0$ satisfying $\tau_0(\Ht_0(a)-T_1)=1$ we have
\begin{multline*}
\sum_{\nu\in N(\Q)}\abs{\sum_{Q\in\pars}(-1)^{\dim\aaa_Q}\int_{\fund_Q(\prmlvl)}f(a^{-1}\nu na)\ dn}\\
\le {\prmlvl}^s e^{-\sprod{\smofrts{Q(\pars)}{}}{\Ht_0(a)}}\sum_{i=1}^m\int_{N(\A)}\abs{(f*X_i)(a^{-1}na)}\ dn
\end{multline*}
for every $f\in\funct(N(\A);\fund_{P,\fin}(\prmlvl))$.
\end{corollary}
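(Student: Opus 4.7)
The plan is to reduce the statement to Corollary \ref{cor: altsumvs23}, following closely the strategy of the proof of Proposition \ref{prop: altsumunipa}. The key input is the bijection $Q \leftrightarrow J'_Q := \Delta_0^Q \setminus \Delta_0^P$ between standard parabolic subgroups $Q \supset P$ and subsets of $J := \Delta_0 \setminus \Delta_0^P$.

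I first verify that under this correspondence the hypotheses of Lemma \ref{lem: altsumvs2} are met. The monotonicity of $\pars$ translates to the monotonicity of the family $\FFF := \{J'_Q : Q \in \pars\}$, and $\pars_{\min}$ corresponds to $\FFF_{\min}$. From the identity $\Delta_0^{Q(\pars)} = \bigcup_{Q \in \pars_{\min}} \Delta_0^Q$ one obtains $\cup \FFF_{\min} = \Delta_0^{Q(\pars)} \setminus \Delta_0^P$, and hence $J \setminus \cup\FFF_{\min} = \Delta_0 \setminus \Delta_0^{Q(\pars)}$. The signs are also compatible: $\dim \aaa_Q = \#(\Delta_0 \setminus \Delta_0^Q) = \abs{J \setminus J'_Q}$. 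Taking $I = \roots_P$ and $I_\alpha = \roots_{P_\alpha}$, where $P_\alpha$ is the maximal standard parabolic corresponding to $\alpha$, the observation that a positive root lies in $\roots_Q$ iff it involves some simple root outside $\Delta_0^Q$ gives $\bigcup_{\alpha \notin J'_Q} I_\alpha = \roots_Q$, so that $\fund_Q(\prmlvl) = \exp(\coor_P(\fund(\prmlvl)^I_{J'_Q}))$.

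Next, I apply Corollary \ref{cor: altsumvs23} to $f \circ \exp \circ \coor_P$ and translate the resulting partial derivatives in the coordinates $e_\alpha$ into Lie algebra derivatives $f * e_J$ via Lemma \ref{lem: convertliegp}. This yields an estimate of the form
\[
\abs{\sum_{Q \in \pars}(-1)^{\dim\aaa_Q}\int_{\fund_Q(\prmlvl)} f(n)\, dn}
\ll \prmlvl^s \sideset{}{''}\sum_J \int_{\fund_P(\prmlvl)} \abs{(f*e_J)(n)}\, dn,
\]
where the sequences $J = (\beta_1,\dots,\beta_m)$ range over a finite set depending only on $G$, subject to the constraint that for every $\alpha \in \Delta_0 \setminus \Delta_0^{Q(\pars)}$ some $\beta_j$ satisfies $\alpha \prec \beta_j$ (this uses both the restriction on $I'$ coming from Corollary \ref{cor: altsumvs23} and the support condition \eqref{eq: imdphi} on the coefficients $\psi_I^J$). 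Summing over left translates by $\nu \in N(\Q)$ and invoking Lemma \ref{lem: fundom} to unfold integration from $\fund_P(\prmlvl)$ to $N(\A)$ produces the same bound with $\sum_{\nu \in N(\Q)}$ moved inside the absolute value.

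Finally, I apply this bound to the conjugate $f_a(x) := f(a^{-1}xa)$. Each resulting derivative acquires a factor $\prod_j \beta_j(a^{-1}) = e^{-\sprod{\sum_j \beta_j}{\Ht_0(a)}}$. The constraint on $J$ guarantees that the coefficient of every $\alpha \in \Delta_0 \setminus \Delta_0^{Q(\pars)}$ in $\sum_j \beta_j$ is at least $1$, so $\sum_j \beta_j - \smofrts{Q(\pars)}{}$ is a non-negative integer combination of simple roots. On the Siegel set $\{a \in A_0 : \tau_0(\Ht_0(a)-T_1)=1\}$ the extra factor $e^{-\sprod{\sum_j \beta_j - \smofrts{Q(\pars)}{}}{\Ht_0(a)}}$ is bounded by a constant depending only on $T_1$ and the finite set of sequences $J$, leaving exactly the claimed decay $e^{-\sprod{\smofrts{Q(\pars)}{}}{\Ht_0(a)}}$. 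I expect the only real obstacle to be a purely bookkeeping one: verifying the combinatorial correspondence and tracking the polynomial dependence on $\prmlvl$ through the $\prmlvl^{\abs{I}}$ prefactor in Corollary \ref{cor: altsumvs23} so that the exponent $s$ depends only on $G$.
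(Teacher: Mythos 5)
Your proposal is correct and follows essentially the same route as the paper, which simply invokes Corollary \ref{cor: altsumvs23} ``exactly as in the proof of Proposition \ref{prop: altsumunipa}'' after identifying the combinatorial dictionary $\FFF=\{\Delta_0^Q\setminus\Delta_0^P:Q\in\pars\}$, $\cup\FFF_{\min}=\Delta_0^{Q(\pars)}\setminus\Delta_0^P$. Your write-up in fact makes explicit a step the paper leaves terse — that the constraint on the sequences $J$ forces $\sum_j\beta_j-\smofrts{Q(\pars)}{}$ to be a non-negative combination of simple roots, yielding the exponential decay on the Siegel set — and the only slips are cosmetic (the phrase ``moved inside the absolute value'' is backwards relative to the statement, and the support condition should be attributed to Lemma \ref{lem: convertliegp} rather than directly to \eqref{eq: imdphi}).
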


\begin{proof}
This follows from Corollary \ref{cor: altsumvs23} exactly as in the proof of Proposition \ref{prop: altsumunipa}.
Recall that $I=\roots_P$, $J=\Delta_0\setminus\Delta_0^P$ and that
$I_\alpha=\roots_{P_\alpha}$ for $\alpha\in J$,
where $P_\alpha$ is the maximal parabolic subgroup of $G$ corresponding to $\alpha$.
In the case at hand we have $\FFF=\{\Delta_0^Q\setminus\Delta_0^P:Q\in\pars\}$ and $\FFF_{\min}=\{\Delta_0^Q\setminus\Delta_0^P:Q\in\pars_{\min}\}$, and therefore
$\Delta_0^{Q(\pars)}\setminus\Delta_0^P=\cup\FFF_{\min}$.
\end{proof}

\subsection{}
The following is a version of \cite[Proposition 5.3.1]{MR3427596}.

\begin{lemma} \label{lem: diagt}
Let $a_1,\dots,a_n\in\R^*$ and let $\alpha:\A^n\rightarrow\A^n$ be the $\A$-linear transformation given by
$\alpha (x_1,\dots,x_n)=(a_1x_1,\dots,a_nx_n)$. Fix a constant $c>0$ and assume that $\abs{a_j}\le c$ for all $j$.
Then for a non-zero polynomial $\phi\in\overline{\Q}[x_1,\dots,x_n]$
we have
\begin{equation} \label{eq: bndphi=0}
\sum_{v\in \Q^n:\phi(v)=0}\abs{f(\alpha(v))}\ll_{c,n} \prmlvl^n \deg\phi\max\abs{a_j}\sum_{I\subset\{1,\dots,n\}}
\int_{\A^n}\abs{\frac{\partial^{\abs{I}}f}{\prod_{i\in I}\partial x_i}(\alpha(v))}\ dv
\end{equation}
for any $f\in\funct(\A^n;\fund_{\fin}(\prmlvl)^n)$.
\end{lemma}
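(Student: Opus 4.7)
The plan is to induct on $\deg\phi$. The base case $\deg\phi = 0$ is immediate: $\phi$ is then a non-zero constant, so $\{v : \phi(v)=0\} = \emptyset$ and both sides of \eqref{eq: bndphi=0} vanish. For the inductive step, after relabeling variables I may assume $d := \deg_{x_n}\phi \ge 1$ and write $\phi = \sum_{i=0}^{d} \phi_i(x_1,\dots,x_{n-1})\,x_n^i$ with $\phi_d \not\equiv 0$. I will partition the rational zero set as $B \sqcup A$, where $B = \{v : \phi_d(v_1,\dots,v_{n-1}) \ne 0,\,\phi(v) = 0\}$; the remainder $A$ is contained in the zero locus of the non-zero polynomial $\phi_d$, regarded as living in $n$ variables and having total degree $\le \deg\phi - d$. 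By the inductive hypothesis (applied to $\phi_d$ with the same $a_j$'s), $\sum_A$ is bounded by $C(c,n)\prmlvl^n\,\deg\phi_d\,\max|a_j|\sum_I\int|\partial^I f(\alpha v)|\,dv$. Once I prove the companion estimate $\sum_B \le C(c,n)\prmlvl^n\,d\,\max|a_j|\sum_I\int|\partial^I f(\alpha v)|\,dv$ with the same constant $C$, the arithmetic identity $\deg\phi_d + d \le \deg\phi$ closes the induction with a constant independent of $\deg\phi$.

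To estimate $\sum_B$: for each $v' = (v_1,\dots,v_{n-1}) \in \Q^{n-1}$ with $\phi_d(v') \ne 0$, the polynomial $\phi(v',x_n)$ has degree exactly $d$ in $x_n$, hence at most $d$ rational roots. To each such root I will apply the one-dimensional adelic Sobolev inequality
\[
\sup_{u \in \A}|g(u)| \le \prmlvl\int_\A|g'(u)|\,du, \qquad g \in \funct(\A;\fund_{\fin}(\prmlvl)),
\]
obtained by averaging $|g|$ over the $\fund_{\fin}(\prmlvl)$-coset of $u$ and then using that $L^1$-functions on $\R$ with $L^1$ derivative vanish at infinity. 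Applying this to $g_{v'}(v_n) = f(\alpha(v',v_n))$, whose real derivative is $g_{v'}'(u) = a_n(\partial_n f)(\alpha(v',u))$ by the chain rule, yields
\[
\sum_{v_n:\,\phi(v',v_n)=0}|f(\alpha(v',v_n))| \le d\,\prmlvl\,|a_n|\int_\A|(\partial_n f)(\alpha(v',u))|\,du.
\]

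I will then sum over $v' \in \Q^{n-1}$ and exchange sum and integral over $u$ by Fubini (the integrand being non-negative). For almost every $u$, I will bound $\sum_{v' \in \Q^{n-1}}|(\partial_n f)(\alpha(v',u))|$ by iterating the one-dimensional Sobolev inequality in each of the coordinates $v_1,\dots,v_{n-1}$, producing $\prmlvl^{n-1}\sum_{I'\subset\{1,\dots,n-1\}}\int_{\A^{n-1}}|\partial^{I'}_{v'}(\partial_n f)(\alpha(v',u))|\,dv'$, in which only subset-indexed (simple) partials appear. A final application of the chain rule pulls out a factor $\prod_{j \in I'}|a_j| \le c^{n-1}$, which I will absorb into the implicit constant; the surviving factor $|a_n|\le\max|a_j|$ accounts for the single power of $\max|a_j|$ on the right-hand side. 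Integrating back in $u$ gives the desired shape for $\sum_B$, and combining with the bound for $\sum_A$ completes the induction.

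The main technical subtlety is that the right-hand side of \eqref{eq: bndphi=0} sums only over subsets $I \subset \{1,\dots,n\}$, involving simple (non-repeated) partial derivatives of $f$, whereas a direct appeal to Lemma \ref{lem: sob} part \ref{part: ltcsum} for $H = \affine^{n-1}$ would produce all multi-index derivatives of total order $\le n-1$, a different and incomparable family. This forces me to perform the Sobolev iteration in each coordinate by hand rather than invoke the lemma as a black box; keeping track of the chain-rule factors so that the final expression has exactly one $\max|a_j|$ (not a product of many $|a_j|$'s) is the delicate point.
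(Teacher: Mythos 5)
Your proof is correct, and it takes a route that is closely related to but organized differently from the paper's. The paper inducts on $n$, the number of variables: the base case $n=1$ is a direct Sobolev-plus-root-counting estimate, and the inductive step splits the sum according to whether an arbitrarily chosen non-zero coefficient $\phi_{i_0}(v')$ vanishes, applying the $n=1$ case for the inner sum (when $\phi_{i_0}(v')\ne0$) together with the trivial lattice bound \eqref{eq: trivbnd} in the remaining $n-1$ coordinates, and applying the inductive hypothesis to $\phi_{i_0}$ (when it vanishes) after first bounding the sum over $x_n\in\Q$ by an integral. You instead induct on $\deg\phi$ with $n$ fixed, choose the leading coefficient $\phi_d$, and partition according to $\phi_d(v')=0$ or not; the vanishing part feeds back into the inductive hypothesis in all $n$ variables, while the non-vanishing part is handled by the sharp form $\sup|g|\le\prmlvl\int|g'|$ of the one-dimensional Sobolev inequality combined with the iterated trivial bound. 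Both arguments rest on exactly the same analytic input (one-variable Sobolev on $\A$ respecting the $\fund_{\fin}(\prmlvl)$-invariance, and at most $d$ roots of a one-variable degree-$d$ polynomial), so neither is meaningfully more elementary, but your version has two modest advantages: the arithmetic identity $\deg\phi_d+d\le\deg\phi$ lets you close the induction with a degree-independent constant (the paper simply uses $\deg\phi_{i_0}\le\deg\phi$ and tolerates growth in $n$, which is harmless since $n$ is fixed), and your base case $\deg\phi=0$ is vacuous rather than requiring a separate computation. One small wording issue: you introduce the \emph{pointwise} one-dimensional Sobolev inequality $\sup|g|\le\prmlvl\int|g'|$ but then invoke ``iterating the one-dimensional Sobolev inequality'' to bound the \emph{lattice sum} $\sum_{v'\in\Q^{n-1}}$; what is actually needed there is the companion summation estimate $\sum_{\lambda\in\Q}|h(\lambda)|\le\int_\A|h|+\prmlvl\int_\A|\partial h|$ (obtained from the fundamental domain decomposition $\A=\coprod_{\lambda\in\Q}(\lambda+\fund(\prmlvl))$), which iterated over the $n-1$ coordinates is what produces $\prmlvl^{n-1}\sum_{I'\subset\{1,\dots,n-1\}}$. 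The final formula you display is the right one, so this is a matter of phrasing rather than a gap. Finally, your closing remark that the subset-indexed derivatives are ``incomparable'' to the full family of multi-index derivatives of order $\le n-1$ overstates the point — the former is a subfamily of the latter — but the underlying concern is valid: one cannot appeal to Lemma \ref{lem: sob}(\ref{part: ltcsum}) as a black box and has to carry out the coordinate-wise iteration to get exactly the subset-indexed sum that appears in \eqref{eq: bndphi=0}, which is indeed what both you and the paper do.
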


\begin{proof}
Note first that the bound
\begin{equation} \label{eq: trivbnd}
\sum_{v\in \Q^n}\abs{f(\alpha(v))}\ll_{c,n} \prmlvl^n \sum_{I\subset\{1,\dots,n\}}
\int_{\A^n}\abs{\frac{\partial^{\abs{I}}f}{\prod_{i\in I}\partial x_i}(\alpha(v))}\ dv
\end{equation}
follows already from Lemma \ref{lem: sob}, part \ref{part: ltcsum}.

Let $d=\deg\phi$.
We prove the lemma by induction on $n$. In the case $n=1$, \eqref{eq: bndphi=0} follows from the inequality
\[
\sup\abs{f}\le \prmlvl (\norm{f}_{L^1(\A)}+\norm{f'}_{L^1(\A)}),
\]
and the fact that the number of roots of $\phi$ is at most $d$.

For the induction step, we write
\[
\phi(x_1,\dots,x_n)=\sum_{i=0}^d\phi_i(x_1,\dots,x_{n-1})x_n^i,
\]
where at least one of the polynomials $\phi_i$, say $\phi_{i_0}$, is non-zero.
Let $p:\Q^n\rightarrow \Q^{n-1}$ be the projection to the first $n-1$ coordinates.
We split the sum on the left-hand side of \eqref{eq: bndphi=0} into two according to whether or not $\phi_{i_0}(p(v))=0$.
The first sum is bounded by
\[
\sum_{v'\in \Q^{n-1}:\phi_{i_0}(v')=0}\sum_{x_n\in \Q}\abs{f(\alpha(v',x_n))}
\]
which is majorized by
\[
\prmlvl \sum_{v'\in \Q^{n-1}:\phi_{i_0}(v')=0}\int_{\A}\big(\abs{f(\alpha(v',x_n))}+\abs{\frac{\partial f}{\partial x_n}(\alpha(v',x_n))}\big)\ dx_n,
\]
to which we can apply the induction hypothesis. The second sum is
\[
\sum_{v'\in \Q^{n-1}:\phi_{i_0}(v')\ne0}\sum_{x_n\in \Q:\phi(v',x_n)=0}\abs{f(\alpha(v',x_n))}.
\]
By the case $n=1$, the inner sum is bounded by
\[
\ll_{c} \prmlvl d\abs{a_n}\int_{\A}\big(\abs{f(\alpha(v',x_n))}+\abs{\frac{\partial f}{\partial x_n}(\alpha(v',x_n))}\big)\ dx_n,
\]
since $\phi(v',x_n)$ is a non-zero polynomial in $x_n$ of degree $\le d$ if $\phi_{i_0}(v')\ne0$.
We can now use \eqref{eq: trivbnd} for the first $n-1$ variables to bound the sum over $v'$,
no longer using the condition $\phi_{i_0}(v')\ne0$.
\end{proof}

\begin{corollary} \label{cor: savephi}
Let $P$ be a maximal parabolic subgroup of $G$ with $\Delta_0\setminus\Delta_0^P=\{\alpha\}$. Then there exist $X_1,\dots,X_m\in\univ(\nnn)$ such that
for any non-zero polynomial $\phi$ on $\nnn$ of degree $\le d$ and any compact open subgroup $K'$ of $N(\A_f)$ we have
\[
\sum_{n\in N(\Q):\phi(\log n)=0}\abs{f(a^{-1}na)}\ll_{K'} d
\, e^{-\sprod{\alpha}{\Ht_0(a)}}\sum_{i=1}^m\int_{N(\A)}\abs{(f*X_i)(a^{-1}na)}\ dn
\]
for any $f\in\funct(N(\A);K')$ and $a\in A_0$ such that $\tau_0(\Ht_0(a)-T_1)=1$.
Moreover, if $K'$ contains $\fund_{P,\fin}(\prmlvl)$, then we may take the implied constant to be $\prmlvl^s$, where $s$ is a positive integer depending only on $G$.
\end{corollary}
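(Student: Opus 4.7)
The plan is to reduce the corollary to Lemma \ref{lem: diagt} by passing to linear coordinates on $\nnn$ via $\exp \circ \coor_P$, and then to convert coordinate derivatives back to left-invariant ones using Lemma \ref{lem: convertliegp}.

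Parametrize $N(\A)$ by $n = \exp(\coor_P(v))$ for $v \in \A^{\roots_P}$; this identifies $N(\Q)$ with $\Q^{\roots_P}$ and transforms the condition $\phi(\log n) = 0$ into $\tilde{\phi}(v) = 0$, where $\tilde{\phi} := \phi \circ \coor_P$ has the same degree $\le d$. Under this parametrization, conjugation by $a^{-1}$ acts by the diagonal linear map $\alpha_a$ scaling the $\beta$-coordinate by $a_\beta := \beta(a^{-1}) = e^{-\sprod{\beta}{\Ht_0(a)}}$. Setting $g = f \circ \exp \circ \coor_P$, the sum to be estimated becomes $\sum_{v \in \Q^{\roots_P},\, \tilde{\phi}(v) = 0} \abs{g(\alpha_a(v))}$. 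The Siegel-type condition $\tau_0(\Ht_0(a) - T_1) = 1$ bounds $\sprod{\beta}{\Ht_0(a)}$ from below by $\sprod{\beta}{T_1}$ for every $\beta \in \roots_P$, so each $\abs{a_\beta}$ is bounded by a constant depending only on $G$ and $T_1$. The maximality of $P$ with $\Delta_0 \setminus \Delta_0^P = \{\alpha\}$ is crucial here: every $\beta \in \roots_P$ has the form $\beta = \alpha + \sum_{\gamma \in \Delta_0^P} c_\gamma \gamma$ with $c_\gamma \in \Z_{\ge 0}$, and combined with the Siegel bound this forces $\abs{a_\beta} \ll e^{-\sprod{\alpha}{\Ht_0(a)}}$ uniformly in $\beta$, whence $\max_\beta \abs{a_\beta} \ll e^{-\sprod{\alpha}{\Ht_0(a)}}$.

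Now apply Lemma \ref{lem: diagt} to $g$ with the scaling $\alpha_a$; when $K' \supset \fund_{P,\fin}(\prmlvl)$ for $\prmlvl$ a multiple of $\prmlvl_0$, the pullback $g$ is $\fund_{\fin}(\prmlvl)^{\roots_P}$-invariant, and the lemma produces the factor $\prmlvl^{\abs{\roots_P}} \cdot d \cdot e^{-\sprod{\alpha}{\Ht_0(a)}}$ multiplying a sum of integrals of the partial derivatives $\partial^{\abs{I}} g / \prod_{\beta \in I} \partial v_\beta$ evaluated at $\alpha_a(v)$. It remains to convert these coordinate derivatives back into group-invariant ones. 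Following the same device used in the proof of Proposition \ref{prop: altsumunipa}, I would apply Lemma \ref{lem: convertliegp} to the conjugate function $f_a(x) := f(a^{-1} x a)$ and use the identity $(f_a * e_J)(x) = \prod_j \beta_j(a^{-1}) (f * e_J)(a^{-1} x a)$; since each $\abs{\beta_j(a^{-1})} \ll 1$, this expresses $\partial^{\abs{I}} g / \prod \partial v_\beta$ as a bounded linear combination of terms involving $(f * e_J)(a^{-1} \cdot a)$, with coefficients given by universal polynomials $\psi_I^J$ of bounded degree depending only on $G$. Reversing the change of variables $v \to n$ in $N(\A)$ then yields the claimed estimate, with the $X_i$'s being the relevant $e_J$'s arising in the expansion.

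The main technical obstacle will be controlling the polynomial factors $\psi_I^J$ from the BCH formula in the last step: in the non-abelian case they grow with the archimedean component, but since their degrees depend only on the nilpotency class of $N$, they can be absorbed into the explicit $\prmlvl^s$ factor via the $\fund_{P,\fin}(\prmlvl)$-invariance of $f$, exactly paralleling the corresponding absorption step in the proof of Proposition \ref{prop: altsumunipa}.
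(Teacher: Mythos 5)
Your proposal is correct and follows essentially the same route as the paper, which invokes Lemma \ref{lem: diagt} and Lemma \ref{lem: convertliegp} together with the key estimate $\sprod{\beta}{\Ht_0(a)}\ge\sprod{\alpha}{\Ht_0(a)}-C$ for $\beta\in\roots_P$. One small imprecision: $\beta\in\roots_P$ has the form $c_\alpha\alpha+\sum_{\gamma\in\Delta_0^P}c_\gamma\gamma$ with $c_\alpha\ge1$ (not necessarily $c_\alpha=1$ as you wrote), but this does not affect the conclusion $\abs{a_\beta}\ll e^{-\sprod{\alpha}{\Ht_0(a)}}$.
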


\begin{proof}
This immediately follows from Lemma \ref{lem: diagt} and Lemma \ref{lem: convertliegp}.
Note that for any $\beta\in\roots_P$ we have $\sprod{\beta}{\Ht_0(a)}\ge\sprod{\alpha}{\Ht_0(a)}-C$
for some constant $C$ (depending on $T_1$).
\end{proof}


\subsection{}
We recollect some known facts about the ``parabolic induction'' of conjugacy classes \`a la Lusztig-Spaltenstein \cite{MR527733}.
In the current setup this notion was considered by Hoffmann \cite{1206.3068}.

Let $P=M\ltimes N$ be a parabolic subgroup of $G$ (defined over $\Q$) and let $\gamma\in M$.
As explained in \cite{1206.3068}, there exists a unique conjugacy class $\geom{I}_P(\gamma)=\geom{I}_P^G(\gamma)$ of $G$ which intersects
$\gamma N$ in a Zariski open dense set.
This follows from the fact that there are only finitely many conjugacy classes of $G$ intersecting $\gamma N$
and that each conjugacy class is a locally closed subvariety.
If $\gamma\in M(\Q)$ then $\geom{I}_P(\gamma)$ is defined over $\Q$ and
we let $I_P(\gamma)=\geom I_P(\gamma)\cap\corb_\gamma$ where $\corb_\gamma$ is the coarse class of $\gamma$ in $G(\Q)$.
Note that $I_P(\gamma)$ is non-empty (and consists of a union of conjugacy classes of $G(\Q)$) since
it contains the rational points of a dense Zariski open subset of $\gamma N$.
(We recall that $\gamma N(\Q)\subset\corb_\gamma$.)
More generally, if $Q\supset P$ we will write $\geom I_P^Q(\gamma)=\geom I_{P\cap M_Q}^{M_Q}(\gamma)$
and similarly for $I_P^Q(\gamma)$ (if $\gamma\in M_P(\Q)$).
It will also be convenient to set $\geom I_P^Q(\gamma)=\geom I_P^Q(\gamma_M)$ for any $\gamma\in P$, where
$\gamma_M$ is the projection of $\gamma$ to $M$, and to define $I_P^Q(\gamma)$ similarly if $\gamma\in P(\Q)$.

Let $\gamma\in P$ and suppose that $Q$ is a parabolic subgroup of $G$ containing $P$.
Induction is transitive in the sense that
if $Q\supset P$ and $\eta\in\geom I_P^Q(\gamma)$ then $\geom I_Q(\eta)=\geom I_P(\gamma)$
(see \cite{MR527733}).
Thus,
\begin{equation} \label{eq: transitive}
\text{if $\gamma\in P(\Q)$ and $\eta\in I_P^Q(\gamma)$ then $I_Q(\eta)=I_P(\gamma)$.}
\end{equation}
Another simple property is that if $\delta\in\overline{[\gamma]_M}$, where $[\gamma]_M$ is the conjugacy class of $\gamma$ in $M$, then
\begin{equation} \label{eq: monotone}
\geom I_P(\delta)\subset\overline{\geom I_P(\gamma)}.
\end{equation}
Indeed, $\overline{\geom I_P(\gamma)}\supset \overline{[\gamma]_MN}\supset\overline{[\gamma]_M}N\supset\delta N$
and $\geom I_P(\delta)\cap\delta N\ne\emptyset$.
It follows from the two properties above that
\begin{equation} \label{eq: monind}
\geom I_Q(\gamma)\subset\overline{\geom I_P(\gamma)}
\text{ for } Q \supset P,
\end{equation}
since $\gamma\in\overline{\geom I_P^Q(\gamma)}$.

We will need an additional qualitative property pertaining to induced classes.
By the definition of $\geom I_P(\gamma)$, for each $\gamma \in M$ there exists a non-zero regular function $F_\gamma$ on $N$ which vanishes
on the complement of $\gamma^{-1}\geom I_P(\gamma)$ in $N$.
\begin{lemma} \label{lem: bndeg}
We may choose $F_\gamma$ so that the degree
of the polynomial function $F_\gamma \circ \exp$
on $\nnn$ is bounded in terms of $G$ only.
\end{lemma}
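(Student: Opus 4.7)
The plan is to take $F_\gamma$ to be a carefully chosen minor of the $\Lieg \times \Lieg$ matrix $\Ad(\gamma n) - 1_{\Lieg}$, so that a uniform degree bound follows from crude estimates.

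First, since $\geom{I}_P(\gamma)$ meets the irreducible variety $\gamma N$ in a dense open subset, one has $\gamma N \subset \overline{\geom{I}_P(\gamma)}$. The boundary $\overline{\geom{I}_P(\gamma)} \setminus \geom{I}_P(\gamma)$ is a finite union of conjugacy classes of $G$ of dimension strictly less than $\dim \geom{I}_P(\gamma)$, so setting $d_\gamma := \dim Z_G(x)$ for any $x \in \geom{I}_P(\gamma)$, I would first identify
\[
N \setminus \gamma^{-1}\geom{I}_P(\gamma) = \{n \in N : \dim Z_G(\gamma n) > d_\gamma\}.
\]
Using the standard formula $\operatorname{Lie}(Z_G(y)) = \ker(\Ad(y) - 1_{\Lieg})$, the right-hand side is precisely the locus where $\operatorname{rank}(\Ad(\gamma n) - 1_{\Lieg}) < \dim G - d_\gamma$, i.e.\ the common vanishing locus of the $(\dim G - d_\gamma) \times (\dim G - d_\gamma)$ minors of $\Ad(\gamma n) - 1_{\Lieg}$. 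Since this rank attains the value $\dim G - d_\gamma$ on the nonempty open subset $\gamma^{-1}\geom{I}_P(\gamma) \subset N$, at least one such minor is nonzero as a regular function on $N$, and I would define $F_\gamma$ to be one such minor.

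To bound $\deg(F_\gamma \circ \exp)$, I would factor $\Ad(\gamma n) = \Ad(\gamma)\Ad(n)$: via the fixed faithful representation $r_0 : G \to \GL(N_0)$, the entries of $\Ad(n)$ are polynomials in the entries of $r_0(n)$ of degree depending only on $r_0$, while $r_0 \circ \exp : \nnn \to \GL(N_0)$ is polynomial of degree at most $\dim N_0$ because $r_0(N)$ consists of unipotent matrices. A $(\dim G - d_\gamma)$-minor has degree at most $\dim G$ in these matrix entries, and since $d_\gamma \ge 0$ the exponent is uniformly bounded; hence $\deg(F_\gamma \circ \exp)$ is controlled by a constant depending only on $G$ (and the fixed auxiliary data $r_0$).

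The main obstacle is the first displayed equality: it relies on the fact that every orbit in the boundary of a conjugacy class of $G$ has strictly smaller dimension, so that the centralizer dimension strictly jumps up there. Granting this closure relation, the remainder reduces to routine multilinear algebra; in particular, the choice of which minor to take may depend on $\gamma$, but its size (and hence its degree as a polynomial in $n$) does not.
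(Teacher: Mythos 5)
Your proof is correct, and it takes a genuinely different route from the paper's. The paper argues constructively: following Arthur's \cite[Lemma~2.1]{MR518111}, it builds a regular map $(\gamma,n) \mapsto \ngamma_I(\gamma,n)$ from $X_I \times N$ to $N_I$ that ``collapses'' the induction to the unipotent part, then invokes Lemma~\ref{lem: wc} to reduce the test function $F_\gamma$ to a pullback $f \circ \ngamma_I(\gamma,\cdot)$, with $f$ chosen among finitely many possibilities (finitely many pairs $(J,\upart{\gamma})$). Your argument instead characterizes $\gamma^{-1}\geom{I}_P(\gamma) \cap N$ intrinsically as the locus in $N$ where $\operatorname{rank}(\Ad(\gamma n) - 1)$ attains its maximum value $\dim G - d_\gamma$, and takes $F_\gamma$ to be a suitable $(\dim G - d_\gamma)$-minor. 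This is cleaner and more self-contained: it bypasses the whole $q_I$/$\ngamma_I$ machinery and reduces the degree bound to the obvious bound on the degree of a minor.

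Two small remarks. First, what you label ``the main obstacle'' is not actually one: for an algebraic action of a group on a variety, every orbit in $\overline{O} \setminus O$ has dimension strictly less than $\dim O$ (since $O$ is open in $\overline{O}$ and the boundary is a closed $G$-stable subset of strictly smaller dimension); combined with $\gamma N \subset \overline{\geom{I}_P(\gamma)}$, which you correctly deduced from density, this gives exactly the rank characterization you use, and also shows that the characteristic-zero formula $\Lie Z_G(y) = \ker(\Ad(y)-1)$ is all you need. Second, the detour through $r_0$ in the degree estimate is harmless but superfluous: for $X \in \nnn$ the operator $\ad X$ is nilpotent on $\Lieg$, so $\Ad(\exp X) = \exp(\ad X)$ is directly a polynomial in $X$ of degree $< \dim \Lieg$, giving $\deg(F_\gamma \circ \exp) \le (\dim G)(\dim\Lieg - 1)$ immediately.
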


Let $\spart{\gamma}$ (resp., $\upart{\gamma}$) be the semisimple (resp., unipotent) part of $\gamma$ in the Jordan--Chevalley decomposition.
Denote by $N_{\spart{\gamma}}$ the centralizer of $\spart{\gamma}$ in $N$.
In order to prove Lemma \ref{lem: bndeg}, we use the following result of Arthur on algebraic groups. (We have already used it implicitly when quoting relation \eqref{eq: lp}.)
\begin{lemma} (\cite[Lemma 2.1]{MR518111}) \label{lem: arthur}
For any $n\in N$ there exists $u\in N$, unique up to left translation by $N_{\spart{\gamma}}$,
such that $u\gamma nu^{-1}\in\gamma N_{\spart{\gamma}}$.
The map $n\mapsto u$ defines a morphism $\quotmap_\gamma:N\rightarrow N_{\spart{\gamma}}\bs N$ of affine varieties.
Moreover, if $\gamma\in M(\Q)$ then $\quotmap_\gamma$ is defined over $\Q$.
\end{lemma}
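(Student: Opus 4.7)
The plan is to use Arthur's Lemma \ref{lem: arthur} to reduce the construction of $F_\gamma$ to an analogous problem for an induced \emph{unipotent} orbit in the centralizer of $\spart\gamma$, and then to conclude by two finiteness principles.

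First I would associate to $\quotmap_\gamma$ a polynomial lift $\Phi_\gamma:N\rightarrow N_{\spart\gamma}$ (well-defined modulo left translation by $N_{\spart\gamma}$, which is immaterial for questions about $G$-conjugacy). By Lemma \ref{lem: arthur}, for every $n\in N$ the element $\gamma n$ is $G$-conjugate to $\gamma\Phi_\gamma(n)=\spart\gamma\cdot(\upart\gamma\Phi_\gamma(n))$, with $\upart\gamma\Phi_\gamma(n)$ a unipotent element of the reductive group $G_{\spart\gamma}$. Setting $M_{\spart\gamma}=M\cap G_{\spart\gamma}$ and $P_{\spart\gamma}=P\cap G_{\spart\gamma}=M_{\spart\gamma}\ltimes N_{\spart\gamma}$, transitivity of induction \eqref{eq: transitive} yields that $\gamma n\in\geom I_P(\gamma)$ if and only if $\upart\gamma\Phi_\gamma(n)$ belongs to the induced unipotent class $\geom I_{P_{\spart\gamma}}^{G_{\spart\gamma}}(\upart\gamma)$. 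Moreover, Arthur's proof of Lemma \ref{lem: arthur} constructs $\quotmap_\gamma$ inductively along the descending central series of $N$ by solving polynomial equations of bounded degree at each step, so $\Phi_\gamma\circ\exp$ is polynomial of degree bounded in terms of $G$ only.

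Next I would produce a defining polynomial $F'$ on $N_{\spart\gamma}$ of the complement of $\upart\gamma^{-1}\geom I_{P_{\spart\gamma}}^{G_{\spart\gamma}}(\upart\gamma)\cap N_{\spart\gamma}$, of degree bounded uniformly in $\gamma$. Here I invoke two classical finiteness statements: (a) there are only finitely many $G(\overline{\Q})$-conjugacy classes of centralizers $G_{\spart\gamma}$ as $\spart\gamma$ varies over semisimple elements of $G(\overline{\Q})$, since these are pseudo-Levi subgroups and are classified by subsets of the root datum of $G$; (b) in any reductive group the set of unipotent orbits is finite. Hence the isomorphism type of the triple $(G_{\spart\gamma},P_{\spart\gamma},\upart\gamma)$ runs over a finite set, and for each type one may choose a single defining polynomial $F'$ of degree bounded by a constant depending only on $G$. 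A convenient source of such equations is minors of fixed size of $(r_0(\spart\gamma\upart\gamma\cdot)-\lambda I)^k$ in the faithful representation $r_0$, which separate the Jordan type of the unipotent part relative to $\spart\gamma$ and hence distinguish $\geom I_P(\gamma)$ from the other orbits meeting $\gamma N$.

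Setting $F_\gamma=F'\circ\Phi_\gamma$ finishes the proof: this polynomial vanishes on the complement of $\gamma^{-1}\geom I_P(\gamma)\cap N$ in $N$, is non-zero since $F'\not\equiv 0$ on the image of $\Phi_\gamma$ (which meets the open class $\geom I_{P_{\spart\gamma}}^{G_{\spart\gamma}}(\upart\gamma)$), and has degree on $\nnn$ bounded by $\deg F'\cdot\deg(\Phi_\gamma\circ\exp)$, a quantity depending only on $G$. The main obstacle I anticipate is Step~2: while the finiteness of pseudo-Levi and unipotent-orbit types is standard, one must verify that a \emph{bounded-degree} defining polynomial of the complement of the open induced class in $\upart\gamma N_{\spart\gamma}$ can really be chosen uniformly across all the finitely many types arising, since orbits of equal dimension are not separated by rank conditions on $\Ad-I$ alone and one must instead use rank conditions on several powers $(\Ad-I)^k$, or an analogous collection of invariants in the fixed representation $r_0$.
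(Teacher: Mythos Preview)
Your proposal does not address the stated Lemma~\ref{lem: arthur} at all. That lemma asserts the existence, uniqueness modulo $N_{\spart\gamma}$, and algebraicity of the conjugating map $\quotmap_\gamma:N\to N_{\spart\gamma}\bs N$; the paper simply quotes it from Arthur and gives no separate proof. What you have written is instead a sketch of Lemma~\ref{lem: bndeg} (the bounded-degree claim for $F_\gamma$), which \emph{uses} Lemma~\ref{lem: arthur} as an input --- as you say explicitly in your first sentence.

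Granting that you intended Lemma~\ref{lem: bndeg}, your outline is close to the paper's but has two soft spots. First, the key reduction ``$\gamma n\in\geom I_P(\gamma)$ if and only if $\upart\gamma\Phi_\gamma(n)\in\geom I_{P_{\spart\gamma}}^{G_{\spart\gamma}}(\upart\gamma)$'' does not follow from transitivity~\eqref{eq: transitive}, which concerns nested parabolics in the same ambient group rather than passage to a centralizer. Only one implication is actually needed, and the paper isolates and proves it separately as Lemma~\ref{lem: wc} by comparing the two Zariski-open dense subsets of $N_{\spart\gamma}$. Second, your claim that $\Phi_\gamma\circ\exp$ has degree bounded in terms of $G$ only is asserted but not justified: Arthur's inductive construction depends on $\spart\gamma$ through the factors $\alpha(\spart\gamma)-1$, and one must argue that the degree in $n$ is nonetheless uniform. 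The paper does this by stratifying $B\cap M$ into finitely many locally closed pieces $X_I$ (and further $Y_J$) on which $N_{\spart\gamma}=N_I$ is a fixed group and $(\gamma,n)\mapsto\quotmap_\gamma(n)$ is a single regular morphism $X_I\times N\to N_I\bs N$; the finiteness then comes from the finiteness of strata and of unipotent classes in each $M_I$. Your appeal to finiteness of pseudo-Levi types over~$\overline\Q$ is in the same spirit and controls the target data $(G_{\spart\gamma},P_{\spart\gamma},\upart\gamma)$, but it does not by itself give the uniform degree bound on the map $\Phi_\gamma$.
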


We will need to know that in a suitable sense $\quotmap_\gamma$ is algebraic in $\gamma$.
Before making this more precise we prove another lemma.
Let $G_{\spart{\gamma}}$ (resp., $P_{\spart{\gamma}}$, $M_{\spart{\gamma}}$) be the connected component of the identity
of the centralizer of $\spart{\gamma}$ in $G$ (resp., $P$, $M$).
It is well known that $P_{\spart{\gamma}}$ is a parabolic subgroup of $G_{\spart{\gamma}}$ with Levi decomposition $P_{\spart{\gamma}}=M_{\spart{\gamma}}\ltimes N_{\spart{\gamma}}$.
\begin{lemma} \label{lem: wc}
Let $n\in N$ and suppose that there exists $u\in N$ such that
$\gamma^{-1}u\gamma nu^{-1}\in N_{\spart{\gamma}}\cap\upart{\gamma}^{-1}\geom I_{P_{\spart{\gamma}}}^{G_{\spart{\gamma}}}(\upart{\gamma})$.
Then $n\in\gamma^{-1}\geom I_P(\gamma)$.
\end{lemma}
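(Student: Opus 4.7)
The plan is to reduce the assertion to the compatibility of Lusztig--Spaltenstein induction with the Jordan decomposition. Let $m:=\gamma^{-1}u\gamma nu^{-1}$, which by hypothesis satisfies $m\in N_{\spart\gamma}$ and $\upart\gamma m\in\geom I_{P_{\spart\gamma}}^{G_{\spart\gamma}}(\upart\gamma)$. The identity $u\gamma nu^{-1}=\gamma m$ shows that $\gamma n$ and $\gamma m$ are $G$-conjugate via $u\in N$, so it suffices to prove $\gamma m\in\geom I_P(\gamma)$.

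I would next identify the Jordan decomposition of $\gamma m$. Since $m\in N_{\spart\gamma}$ and $\upart\gamma\in M_{\spart\gamma}$ (the latter because unipotent elements of a centralizer lie in its identity component), both $m$ and $\upart\gamma$ commute with $\spart\gamma$. Hence $\gamma m=\spart\gamma\cdot(\upart\gamma m)$ with $\spart\gamma$ and $\upart\gamma m$ commuting, and $\upart\gamma m\in P_{\spart\gamma}\subset G_{\spart\gamma}$. By hypothesis, $\upart\gamma m$ lies in the class $\geom I_{P_{\spart\gamma}}^{G_{\spart\gamma}}(\upart\gamma)$, which is the Lusztig--Spaltenstein induction of the unipotent class $[\upart\gamma]_{M_{\spart\gamma}}$ and hence is itself a unipotent class. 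By uniqueness of the Jordan decomposition, $\gamma m=\spart\gamma\cdot(\upart\gamma m)$ is the Jordan decomposition of $\gamma m$.

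To finish, I would invoke the standard compatibility of Lusztig--Spaltenstein induction with the Jordan decomposition: $\geom I_P^G(\gamma)$ consists precisely of those $g\in G$ whose semisimple part is $G$-conjugate to $\spart\gamma$ and which, after conjugating the semisimple part back to $\spart\gamma$, have unipotent part in $\geom I_{P_{\spart\gamma}}^{G_{\spart\gamma}}(\upart\gamma)$. Applied to $\gamma m$, whose Jordan decomposition $(\spart\gamma,\upart\gamma m)$ satisfies both conditions, this yields $\gamma m\in\geom I_P(\gamma)$, as required.

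The main obstacle is the compatibility statement used in the last step. It is a standard result in the theory of induced conjugacy classes (see \cite{MR527733}). Alternatively, it can be verified directly in the present setting using Arthur's quotient map $\quotmap_\gamma:N\to N_{\spart\gamma}\backslash N$ of Lemma \ref{lem: arthur}: this morphism identifies $N$-orbits in $\gamma N$ with elements of $\gamma N_{\spart\gamma}$ modulo the natural $N_{\spart\gamma}$-action, and this allows one to transfer the defining density property of $\geom I_P(\gamma)$ in $\gamma N$ to the corresponding property for $\geom I_{P_{\spart\gamma}}^{G_{\spart\gamma}}(\upart\gamma)$ in $\upart\gamma N_{\spart\gamma}$.
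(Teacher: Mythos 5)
Your main argument is circular. The ``compatibility of Lusztig--Spaltenstein induction with the Jordan decomposition'' that you invoke in the final step is, in the direction you need it, precisely the content of the lemma you are trying to prove. The reference \cite{MR527733} only treats induction of unipotent classes and does not contain the compatibility statement for arbitrary conjugacy classes; in the general case this is exactly what needs to be established, so quoting it as a black box does not close the gap. Your Jordan-decomposition computation is correct but elementary: it merely restates the hypothesis in Jordan-decomposed form, and the nontrivial work remains to be done.

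Your alternative sketch using Arthur's quotient map points in the right direction and is close to the paper's actual proof, but as written it is too vague to check, and one specific aspect does not go through as stated. ``Transfer the defining density property'' would seem to require showing that $X_\gamma := \gamma^{-1}\geom I_P(\gamma)\cap N_{\spart{\gamma}}$ is dense in $N_{\spart{\gamma}}$, yet density of $\gamma^{-1}\geom I_P(\gamma)\cap N$ in $N$ does not automatically pass to its trace on the subvariety $N_{\spart{\gamma}}$. The paper sidesteps this: it uses Arthur's lemma only to show that $X_\gamma$ is non-empty (and it is open for trivial reasons), so that it meets the dense open set $Y_\gamma := \upart{\gamma}^{-1}\geom I^{G_{\spart{\gamma}}}_{P_{\spart{\gamma}}}(\upart{\gamma})\cap N_{\spart{\gamma}}$. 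Taking $z\in X_\gamma\cap Y_\gamma$, the key step is the conjugation argument: for any $y\in Y_\gamma$ there is $g\in G_{\spart{\gamma}}$ with $\upart{\gamma}y = g^{-1}\upart{\gamma}zg$, and since $g$ commutes with $\spart{\gamma}$ this gives $\gamma y = g^{-1}\gamma z g\in\geom I_P(\gamma)$, hence $Y_\gamma\subset X_\gamma$. This step --- upgrading $G_{\spart{\gamma}}$-conjugacy of unipotent parts to $G$-conjugacy of the full elements --- is the essential idea, and it is absent from your proposal.
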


\begin{proof}
The set $X_\gamma:=\gamma^{-1} \geom I_P(\gamma)\cap N_{\spart{\gamma}}$ is non-empty and Zariski open in $N_{\spart{\gamma}}$, since if
$\gamma n\in \geom I_P(\gamma)$ then by Lemma \ref{lem: arthur} there exists $u\in N$ such that $u^{-1}\gamma nu\in \gamma N_{\spart{\gamma}}$.
Therefore $X_\gamma$ intersects non-trivially the Zariski open and dense set
$Y_\gamma:=\upart{\gamma}^{-1} \geom I_{P_{\spart{\gamma}}}^{G_{\spart{\gamma}}}(\upart{\gamma})\cap N_{\spart{\gamma}}$
of $N_{\spart{\gamma}}$. Let $z\in X_\gamma\cap Y_\gamma$.
Then for any $y\in Y_\gamma$ there exists $g\in G_{\spart{\gamma}}$ such that $\upart{\gamma}y=g^{-1}\upart{\gamma}xg$.
Thus $\gamma y=g^{-1}\gamma zg$ and hence $y\in X_\gamma$. It follows that $Y_\gamma\subset X_\gamma$, hence the lemma.
\end{proof}

\begin{proof}[Proof of Lemma \ref{lem: bndeg}]
Fix a Borel subgroup $B\subset P$ of $G$ (so that its unipotent radical $U$ contains $N$) and a maximal torus $S$ of $G$ contained in $B\cap M$.
(Of course $S$, $B$ and $U$ are not necessarily defined over $\Q$.)
Let $R(S,N)$ be the set of roots of $S$ in $N$.
For any subset $I$ of $R(S,N)$ let $X_I$ be the affine subvariety of $B\cap M$ consisting of the elements $\gamma$ such that $\spart{\gamma}\in S$
and $I=\{\alpha\in R(S,N):\alpha(\spart{\gamma})=1\}$. Also, let $N_I$ be the subgroup of $N$ generated by the root subgroups corresponding
to the roots in $I$. Then $N_{\spart{\gamma}}=N_I$ if $\gamma\in X_I$.
The proof of \cite[Lemma 2.1]{MR518111} shows that $(\gamma,n) \mapsto \quotmap_\gamma(n)$ defines a regular map
\[
\quotmap_I:X_I\times N\rightarrow N_I\bs N.
\]
For completeness we provide the details, since the setup of \cite[Lemma 2.1]{MR518111} is slightly different.
Let $N=U_0\supset\dots\supset U_r=0$ be a sequence of subgroups of $N$ normalized by $B$, such that for all $i=0,\dots,r-1$,
$U_i/U_{i+1}\simeq\affine$ and $[U_i,U]\subset U_{i+1}$.
We claim that for every $i$ there exists a morphism $\quotmap_i:X_I\times N\rightarrow U_iN_I\bs N$
of affine varieties characterized by the property that
\[
\gamma^{-1}\quotmap_i(\gamma,n)\gamma n\quotmap_i(\gamma,n)^{-1}\in U_iN_I\text{ for any }\gamma\in X_I, n\in N.
\]
The case $i=r$ is then the sought-after result.
We argue by induction on $i$. The assertion is trivial for $i=0$. For the induction step, assume that $\quotmap_i$ is defined for some $i<r$.
Let $\alpha_i$ be the root corresponding to $U_i/U_{i+1}$.
If $\alpha_i\in I$ then $U_iN_I=U_{i+1}N_I$ and we simply take $\quotmap_{i+1}=\quotmap_i$.
Otherwise, fixing $u\in N$ such that $x:=\gamma^{-1}u\gamma nu^{-1}\in U_iN_I$ we need to show that there exists $v\in U_i$,
uniquely determined modulo $U_{i+1}$, such that $y_v:=\gamma^{-1}vu\gamma nu^{-1}v^{-1}\in U_{i+1}N_I$.
Note that
\[
y_v=\gamma^{-1}v\gamma xv^{-1}=\gamma^{-1}v\gamma v^{-1}x[x^{-1},v]=\spart{\gamma}^{-1}v\spart{\gamma}
\cdot \spart{\gamma}^{-1}[v^{-1},\upart{\gamma}^{-1}]\spart{\gamma}\cdot v^{-1}x[x^{-1},v]
\]
so that
\[
y_v\in [\spart{\gamma}^{-1},v]xU_{i+1}.
\]
The map $v\mapsto [\spart{\gamma}^{-1},v]$ induces an isomorphism of $U_i/U_{i+1}$ which under the identification with $\affine$
is given by multiplication by $\alpha_i(\spart{\gamma})-1$. Thus $q_{i+1}$ is defined and it is clearly a morphism
(by choosing an algebraic section $U_iN_I\bs N\rightarrow N$ for the quotient map). This finishes the construction of $q_I$.

Fix an algebraic section $\sctn_I:N_I\bs N\rightarrow N$ for the canonical projection $N\rightarrow N_I\bs N$ such that $\sctn_I(N_I)=e$.
Let
\[
\ngamma_I:X_I\times N\rightarrow N_I
\]
 be the regular map given by
\[
\ngamma_I(\gamma,n)=\gamma^{-1}\sctn_I(\quotmap_I(\gamma,n))\gamma n\sctn_I(\quotmap_I(\gamma,n))^{-1}.
\]
For any $\gamma\in X_I$, the map $\ngamma_I(\gamma,\cdot):N\rightarrow N_I$ is surjective,
since its restriction to $N_I$ is the identity map.

We can now conclude Lemma \ref{lem: bndeg}.
For any subset $J$ of $R(S,U)$ let $Y_J$ be the subvariety of $B\cap M$ consisting of elements $\gamma$ such that $\spart{\gamma}\in S$
and $J=\{\alpha\in R(S,U):\alpha(\spart{\gamma})=1\}$.
Thus, $Y_J\subset X_I$ for $I=J\cap R(S,N)$.
Upon conjugating $\gamma$ by an element of $M$ we may assume without loss of generality that $\gamma\in Y_J$ for some $J$.
Let $G_J$ (resp., $P_J$; $M_J$) be the subgroup of $G$ (resp., $P$; $M$) generated by $S$ and the root subgroups corresponding to $J$ and $-J$
(resp., $J$ and $-(J\cap R(S,U\cap M))$; $\pm(J\cap R(S,U\cap M))$).
Thus, $G_J=G_{\spart{\gamma}}$, $P_J=P_{\spart{\gamma}}$, $M_J=M_{\spart{\gamma}}$ and $P_J$ is a parabolic subgroup of $G_J$ with Levi decomposition
$P_J=M_J\ltimes N_I$. (Recall that $N_I=N_{\spart{\gamma}}$.)
Since there are only finitely many unipotent conjugacy classes in $M_I$, we may regard $\upart{\gamma}$ (as well as $J$) as fixed.
By Lemma \ref{lem: wc}, $\gamma n\in\geom I_P(\gamma)$ if for some $u\in N$ we have
$\gamma^{-1}u\gamma nu^{-1}\in N_I\cap\upart{\gamma}^{-1}\geom I_{P_J}^{G_J}(\upart{\gamma})$.
Thus, $n\in\gamma^{-1}\geom I_P(\gamma)$ if $\ngamma_I(\gamma,n)\in  (\upart{\gamma})^{-1}\geom I_{P_J}^{G_J}(\upart{\gamma})$.
Hence, if we choose a non-zero regular function $f$ on $N_I$ which vanishes on the complement of
$(\upart{\gamma})^{-1}\geom I_{P_J}^{G_J}(\upart{\gamma})$ in $N_I$ then we can take
$F_\gamma=f\circ\ngamma_I(\gamma,\cdot)$. Lemma \ref{lem: bndeg} follows.
\end{proof}

\subsection{}
The following result is modeled after \cite[\S6]{MR3427596}.
For standard parabolic subgroups $P_1\subset P_2$ of $G$ define
\[
\asmofrts 12=\begin{cases}\frac{\smofrts 12}{\dim\aaa_1^2}, &P_1\subsetneq P_2,\\0, &\text{otherwise.}\end{cases}
\]
\begin{proposition} \label{prop: subsetY}
Let $P_1\subset P_2$ be standard parabolic subgroups of $G$.
There exist $X_1,\dots,X_m\in\univ(\nnn_1^2)$, such that for a closed subvariety $V$ of $G$, a compact open subgroup
$K_1$ of $N_1(\A_f)$ and an element $\eta\in M_1(\Q)$ we have
\begin{multline} \label{eq: wwntb}
\abs{\sum_{P:P_1\subset P\subset P_2}(-1)^{\dim\aaa_P}
\sum_{\nu\in N_1^P(\Q):I_P(\eta\nu)\subset V}\int_{N_P(\A)}f(a^{-1}\nu na)\ dn}\ll_{K_1}\\
e^{-\sprod{\asmofrts 12}{\Ht_0(a)}}
\sum_{i=1}^m\int_{N_1(\A)}\abs{(f*X_i)(a^{-1}na)}\ dn
\end{multline}
for any $f\in\funct(N_1(\A);K_1)$ and $a\in A_0$ such that $\tau_0^2(\Ht_0(a)-T_1)=1$.
Moreover, if $K_1$ contains $\fund_{P_1,\fin}(\prmlvl)$, then we may take the implied constant to be $\prmlvl^s$, where $s$ is a positive integer depending only on $G$.
\end{proposition}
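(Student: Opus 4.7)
The plan is to adapt the proof of Proposition \ref{prop: altsumunipa} so as to incorporate the restriction $I_P(\eta\nu)\subset V$, by upgrading to the monotone-family alternating-sum machinery of Lemma \ref{lem: altsumvs2} and Corollary \ref{cor: altsumvs23}, and by exploiting Lemma \ref{lem: bndeg} together with Corollary \ref{cor: savephi} to extract the refined exponent $\asmofrts 12$. The overall shape is modeled after \cite[\S 6]{MR3427596}.

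First I reduce to $P_2=G$ by replacing $f$ with its constant term $\int_{N_2(\A)}f(n\cdot)\,dn$ and $G$ with $M_2$ (Lemma \ref{lem: sob}, part \ref{part: consterm}). Using Lemma \ref{lem: fundom} to express each $\int_{N_P(\A)}f(a^{-1}\nu na)\,dn$ as a sum over $N_P(\Q)$ of integrals over $\fund_P(\prmlvl)$ and exploiting $N_1=N_1^P\cdot N_P$ as $\Q$-varieties, I collapse the combined sum $\sum_{\nu\in N_1^P(\Q)}\sum_{N_P(\Q)}$ into a single $\sum_{\mu\in N_1(\Q)}$. Writing $\pi_P(\mu)\in N_1^P(\Q)$ for the Levi component of $\mu$, the constraint $I_P(\eta\nu)\subset V$ becomes $\geom I_P(\eta\pi_P(\mu))\subset V$.

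For each fixed $\mu$, the family $\pars_\mu=\{P:P_1\subset P\subset G,\,\geom I_P(\eta\pi_P(\mu))\subset V\}$ is monotone. Indeed, for $P\subset Q$ one has $\pi_Q(\mu)=\pi_P(\mu)\cdot n'$ with $n'\in N_P\cap M_Q=N_{P\cap M_Q}$, so $\eta\pi_Q(\mu)\in\eta\pi_P(\mu)N_{P\cap M_Q}\subset\overline{\geom I_{P\cap M_Q}^{M_Q}(\eta\pi_P(\mu))}$, and transitivity of induction combined with the closedness of $V$ yields $\geom I_Q(\eta\pi_Q(\mu))\subset\overline{\geom I_P(\eta\pi_P(\mu))}\subset V$ via \eqref{eq: monind} and \eqref{eq: monotone}. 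Applying Corollary \ref{cor: altsumvs23} to $\FFF=\{\Delta_0^P\setminus\Delta_0^1:P\in\pars_\mu\}$ and converting partial derivatives via Lemma \ref{lem: convertliegp} then yields, in parallel with Corollary \ref{cor: altsumunipa2}, a preliminary per-$\mu$ bound of the form $\prmlvl^s\,e^{-\sprod{\smofrts{Q(\pars_\mu)}{}}{\Ht_0(a)}}\sum_i\int_{N_1(\A)}\abs{(f*X_i)(a^{-1}\mu na)}\,dn$.

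It remains to upgrade the $\mu$-dependent decay $\smofrts{Q(\pars_\mu)}{}$ to the uniform factor $\asmofrts 12=\smofrts 12/\dim\aaa_1^2$. When $Q(\pars_\mu)=G$ the above is already stronger than required. When $Q(\pars_\mu)\subsetneq G$, some $\alpha\in\Delta_0\setminus\Delta_0^{Q(\pars_\mu)}$ and a parabolic $P$ with $\Delta_0^P=\Delta_0^1\cup\{\alpha\}$ satisfy $\geom I_P(\eta\pi_P(\mu))\not\subset V$. By Lemma \ref{lem: bndeg} this non-inclusion is cut out by the non-vanishing of a regular function on $N_1^P$ whose degree is bounded purely in terms of $G$, and Corollary \ref{cor: savephi} applied in the $\alpha$-direction then supplies the missing factor $e^{-\sprod{\alpha}{\Ht_0(a)}}$. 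A H\"older-type averaging of these direction-specific gains over $\alpha\in\Delta_0\setminus\Delta_0^1$ with uniform weights $1/\dim\aaa_1^2$ collapses the case-by-case bound into the symmetric estimate $e^{-\sprod{\asmofrts 12}{\Ht_0(a)}}$. The principal obstacle is precisely this final averaging: executing the passage from direction-dependent gains to the symmetric exponent $\asmofrts 12$ while retaining uniform control over the auxiliary differential operators $X_i$. This is where the combinatorial input of \cite[\S 6]{MR3427596} is decisive; the $G$-uniformity of the degree bound in Lemma \ref{lem: bndeg} is essential so that the final constant $\prmlvl^s$ depends only on $G$ and not on $V$, $\eta$, or $\mu$.
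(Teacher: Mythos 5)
Your overall strategy coincides with the paper's: reduce to $P_2=G$, replace the adelic integrals $\int_{N_P(\A)}$ by sums of integrals over the fundamental domains $\fund_P(\prmlvl)$, collapse the double sums into a single sum over $\mu\in N_1(\Q)$, group the resulting expression by the monotone family $\pars(\mu)$ and bound each aggregate $A(\pars)$, and then pass from per-direction gains to the symmetric exponent $\asmofrts 12$ by a geometric mean. That much is right, and the role assigned to Corollary \ref{cor: altsumvs23}, Lemma \ref{lem: bndeg} and Corollary \ref{cor: savephi} is the correct one.

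There are, however, two concrete gaps. First, your casework is inverted. Since $\smofrts{Q(\pars)}{}=\sum_{\alpha\in\Delta_0\setminus\Delta_0^{Q(\pars)}}\alpha$, the situation $Q(\pars)=G$ is the one where Corollary \ref{cor: altsumunipa2} delivers \emph{no} decay whatsoever (the exponent is $0$), not the situation that is ``already stronger than required''; the favourable extreme is $Q(\pars)=P_1$. The correct dichotomy is per root: Corollary \ref{cor: altsumunipa2} already produces the factor $e^{-\sprod{\alpha}{\Ht_0(a)}}$ for all $\alpha\notin\Delta_0^{Q(\pars)}$, and the genuinely problematic $\alpha$ are those in $\Delta_0^{Q(\pars)}\setminus\Delta_0^1$. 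Second, your description of the refinement for such $\alpha$ does not work: the non-containment $\geom I_P(\eta\pi_P(\mu))\not\subset V$ for $P$ with $\Delta_0^P=\Delta_0^1\cup\{\alpha\}$ need not hold (e.g., it fails when $P_1\in\pars$). What is actually needed is to pick $R\in\pars_{\min}$ with $\alpha\in\Delta_0^R$, set $\Delta_0^S=\Delta_0^R\setminus\{\alpha\}$, and decompose $\nu=\nu_1\nu_2\nu_3$ with $\nu_1\in N_1^S(\Q)$, $\nu_2\in N_S^R(\Q)$, $\nu_3\in N_R(\Q)$; transitivity \eqref{eq: transitive} and the minimality of $R$ in $\pars$ force $\eta\nu_1\nu_2\notin\geom I_S^R(\eta\nu_1)$, and this is the Zariski-closed condition to which Corollary \ref{cor: savephi} and Lemma \ref{lem: bndeg} are applied. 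Finally, you misidentify the difficulty: once the per-$\alpha$ bound holds for every $\alpha\in\Delta_0\setminus\Delta_0^1$ with a single fixed finite list of $X_i$, taking the geometric mean of the $\dim\aaa_1^2$ inequalities instantly yields $e^{-\sprod{\asmofrts 12}{\Ht_0(a)}}$; this is elementary and not where the combinatorics of \cite{MR3427596} enter.
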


\begin{proof}
We first remark that the condition $I_P(\eta\nu)\subset V$ is equivalent to $\geom I_P(\eta\nu)\subset V$,
since for any $\gamma\in G(\Q)$ the conjugacy class of $\gamma$ in $G(\Q)$ is Zariski dense in the conjugacy class
of $\gamma$ in $G$.
Also, the condition $\geom I_P(\eta\nu)\subset V$ is equivalent to $\geom I_P^{P_2}(\eta\nu)\subset V'$, where
\[
V'=\{\delta\in M_2:\geom I_{P_2}(\delta)\subset V\},
\]
which is a closed subvariety of $M_2$ by \eqref{eq: monotone}.
We may therefore, after replacing
$f$ by $\int_{N_2(\A)}f(n\cdot)\ dn$, assume without loss of generality that $P_2=G$.

For any $\nu\in N_1(\Q)$ let
\[
\pars(\nu)=\{P\supset P_1 :  \, I_P(\eta\nu)\subset V\}=
\{P\supset P_1 :  \, \geom I_P(\eta\nu)\subset V\}.
\]
Note that by \eqref{eq: monind}, $\pars(\nu)$ is monotone.
Recall that in Lemma \ref{lem: fundom} we have constructed
for any standard parabolic subgroup $P$ of $G$ a family of fundamental domains $\fund_P(\prmlvl)$ for $N_P (\Q) \backslash N_P (\A)$.
Let $\prmlvl$ be a positive integer such that $\fund_{P_1,\fin}(\prmlvl)$
is a subgroup of $K_1$.
Let $\pars$ be a monotone family of parabolic subgroups of $G$ which contain $P_1$ and consider
\[
A(\pars)=\sum_{\nu\in N_1(\Q):\pars(\nu)=\pars}\sum_{P\in\pars}(-1)^{\dim\aaa_P}
\int_{\fund_P(\prmlvl)}f(a^{-1}\nu na)\ dn.
\]
Then the left-hand side of \eqref{eq: wwntb} is equal to $\abs{\sum A(\pars)}$,
where $\pars$ ranges over all monotone families.
Thus, in order to prove the proposition it suffices to show that there exist suitable $X_i$ and $s$ such that
\begin{equation} \label{eq: eachpars}
\abs{A(\pars)}\le {\prmlvl}^s e^{-\sprod{\alpha}{\Ht_0(a)}}\sum_{i=1}^m\int_{N_1(\A)}\abs{(f*X_i)(a^{-1}na)}\ dn
\end{equation}
for any $\alpha\in\Delta_0^2\setminus\Delta_0^1$.
Recall the notation $Q(\pars)$ for the parabolic subgroup generated by the inclusion-minimal elements of $\pars$.
To show \eqref{eq: eachpars}, we distinguish two cases for $\alpha$.
If $\alpha\notin\Delta_0^{Q(\pars)}$ then \eqref{eq: eachpars} follows from Corollary \ref{cor: altsumunipa2}.
For $\alpha\in\Delta_0^{Q(\pars)}$, we will show in fact that
\begin{equation} \label{eq: indvP2}
\abs{\sum_{\nu\in N_1(\Q):\pars(\nu)=\pars}f(a^{-1}\nu a)}\le
\prmlvl^{s'}
e^{-\sprod\alpha{\Ht_0(a)}}\sum_{i=1}^m\int_{N_1(\A)}\abs{(f*X_i)(a^{-1}na)}\ dn
\end{equation}
for all $P\supset P_1$. From this we obtain (with possibly different differential operators $X_i$) the estimate
\begin{equation} \label{eq: indvP}
\abs{\sum_{\nu\in N_1(\Q):\pars(\nu)=\pars}\int_{\fund_P(\prmlvl)}f(a^{-1}\nu na)\ dn}\le {\prmlvl}^s
e^{-\sprod\alpha{\Ht_0(a)}}\sum_{i=1}^m\int_{N_1(\A)}\abs{(f*X_i)(a^{-1}na)}\ dn,
\end{equation}
which in turn implies \eqref{eq: eachpars} for such $\alpha$.
To derive \eqref{eq: indvP} from \eqref{eq: indvP2}, we apply the latter for each $n\in\fund_P(\prmlvl)$
to the right translate $\tilde f$ of $f$ by $a^{-1}na$.
The sets $a^{-1}\fund_P(\prmlvl)a$ for all possible $a$ are contained in a compact subset of $N_P(\A)$
of the form $\Omega_\infty \fund_{P,\fin}(\prmlvl)$, where
the coordinates of the elements of $\log(\Omega_\infty)$ are bounded linearly in $\prmlvl$. Therefore
$\tilde f\in\funct(N_1(\A); \fund_{P_1,\fin}(\prmlvl))$, and the coordinates of $\Ad(a^{-1}na)X_i$ with respect to a fixed basis of $\univ(\nnn)$ are bounded
polynomially in $\prmlvl$.
Thus, \eqref{eq: indvP} follows from \eqref{eq: indvP2}.

To show \eqref{eq: indvP2}, let $R\in\pars_{\min}$ be such that $\alpha\in\Delta_0^R$
and let $S$ be the standard parabolic subgroup of $G$ such that $\Delta_0^S=\Delta_0^R\setminus\{\alpha\}$.
Thus, $P_1\subset S\subsetneq R$.
We claim that the left-hand side of \eqref{eq: indvP2} is majorized by
\begin{equation} \label{eq: nu123}
\sum_{\nu_1\in N_1^S(\Q)}\sum_{\nu_2\in N_S^R(\Q):\eta\nu_1\nu_2\notin\geom I_S^R(\eta\nu_1)}
\sum_{\nu_3\in N_R(\Q)}\abs{f(a^{-1}\nu_1\nu_2\nu_3 a)}.
\end{equation}
Indeed, if $\pars(\nu_1\nu_2\nu_3)=\pars$ then $\eta\nu_1\nu_2\notin\geom I_S^R(\eta\nu_1)$,
for otherwise, by \eqref{eq: transitive}
we would have
\[
\geom I_S(\eta\nu_1\nu_2\nu_3)=\geom I_S(\eta\nu_1)=\geom I_R(\eta\nu_1\nu_2)=\geom I_R(\eta\nu_1\nu_2\nu_3),
\]
in contradiction to the minimality of $R$ (cf.~\cite[Lemme 6.7.5]{MR3427596}).

Using Lemma \ref{lem: latticesum}, we bound the inner sum in \eqref{eq: nu123} by
\[
\prmlvl^{s_1} \sum_i\int_{N_R(\A)}\abs{(f*X_i')(a^{-1}\nu_1\nu_2n_3 a)}\ dn_3
\]
for suitable $X_1',\dots,X'_{m'}\in\univ(\nnn_R)$.
By Corollary \ref{cor: savephi} and Lemma \ref{lem: bndeg} we can now bound the sum over $\nu_2$ in \eqref{eq: nu123} by
\[
\prmlvl^{s_2} e^{-\sprod{\alpha}{\Ht_0(a)}}\sum_i\int_{N_S(\A)}\abs{(f*X_i'')(a^{-1}\nu_1n_2 a)}\ dn_2
\]
for suitable $X_1'',\dots,X_{m''}''\in\univ(\nnn_S)$.
Using Lemma \ref{lem: latticesum} once again, we bound \eqref{eq: nu123} by the right-hand side of \eqref{eq: indvP2},
as required.
\end{proof}

The following corollary follows from this proposition exactly like Corollary \ref{cor: altsumunip} follows from Proposition
\ref{prop: altsumunipa}.

\begin{corollary} \label{cor: subsetY}
Let $P_1\subset P_3\subset P_2$ be standard parabolic subgroups of $G$.
There exist an integer $s$ and $X_1,\dots,X_m\in\univ(\Lieg^1)$ such that for any closed subvariety $V$ of $G$ and $\eta\in M_3(\Q)$ we have
\begin{multline}
\abs{\sum_{P:P_3\subset P\subset P_2}(-1)^{\dim\aaa_P}
\sum_{\nu\in N_3^P(\Q):I_P(\eta\nu)\subset V}\int_{N_P(\A)}f(g^{-1}\nu ng)\ dn}\le \\ \level (K)^s
e^{-\sprod{\asmofrts 32}T}e^{-\sprod{(\asmofrts 32)_{P_1}}{\Ht_0(g)-T}}
\sum_{i=1}^m\int_{N_3(\A)}\abs{(f*X_i)(g^{-1}ng)}\ dn
\end{multline}
for any $f\in\funct(G(\A)^1;K)$ and $g\in G(\A)^1$ such that $F^1(g,T)\tau_1^2(H_{P_1}(g)-T)=1$.
\end{corollary}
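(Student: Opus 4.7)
The plan is to reduce the statement to the setting of Proposition \ref{prop: subsetY} by mimicking the argument used to deduce Corollary \ref{cor: altsumunip} from Proposition \ref{prop: altsumunipa}. First I observe that, as a function of $g$, both sides of the inequality are left $P_1(\Q) N_2(\A)$-invariant: on the left-hand side this follows from the $P_1(\Q)$-invariance of the condition $I_P(\eta\nu) \subset V$ (after conjugating $\eta$ appropriately inside the fixed coarse class, which does not change $\geom I_P$), and on the right-hand side from the $P_1(\Q) N_2(\A)$-invariance of $F^1(\cdot,T) \tau_1^2(H_{P_1}(\cdot)-T)$ and the integrals on $N_3(\A)$. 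This permits me to write $g = n a m k$ with $k \in \K$, $n$ in a fixed compact subset of $N_0^2(\A)$, $m$ in a fixed compact subset of $M_0(\A)^1$, and $a \in A_0$ satisfying $\tau_0^1(\Ht_0(a)-T_1)=1$.

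Next I invoke the inequality \eqref{eq: htp} from the proof of Corollary \ref{cor: altsumunip}: the hypothesis $F^1(g,T) \tau_1^2(H_{P_1}(g)-T)=1$ yields $\sprod{\alpha}{\Ht_0(a)-T} \ge \sprod{\alpha_{P_1}}{\Ht_0(a)-T} > 0$ for every $\alpha \in \Delta_0^2 \setminus \Delta_0^1$. Consequently $\tau_0^2(\Ht_0(a)-T_1)=1$, and $a^{-1} n a$ remains in a fixed compact subset of $N_0^2(\A)$ that is independent of $T$ and $K$. Applying Lemma \ref{lem: sob}, part \ref{part: compact conjugation}, I absorb the conjugation by $m$ and by $a^{-1} n a$, as well as the right translation by $k$, into a continuous seminorm on $f$ at the price of replacing $K$ by $\bigcap_{x \in C} x^{-1} K x$; since $C$ is fixed, the level bound is unaffected up to a constant. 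This reduces the problem to $g = a$.

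At this point Proposition \ref{prop: subsetY}, applied with $P_3$ in place of its $P_1$ and with $K_1 = K \cap N_3(\A_f)$, directly yields a bound of the form
\[
\prmlvl^s\, e^{-\sprod{\asmofrts 32}{\Ht_0(a)}}\, \sum_{i} \int_{N_3(\A)} \abs{(f*X_i)(a^{-1} n a)}\ dn,
\]
where $\prmlvl$ is chosen so that $\fund_{P_3,\fin}(\prmlvl) \subset K \cap N_3(\A_f)$; as noted at the end of the proof of Corollary \ref{cor: altsumunip}, we may take $\prmlvl = \prmlvl_1 \level(K)$ for a positive integer $\prmlvl_1$ depending only on $G$ and $r_0$, so that $\prmlvl^s \ll \level(K)^s$. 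Finally, writing $\Ht_0(a) = (\Ht_0(a) - T) + T$ and using the inequality $\sprod{\alpha}{\Ht_0(a)-T} \ge \sprod{\alpha_{P_1}}{\Ht_0(a)-T}$ componentwise on the positive combination $\asmofrts 32 = \smofrts 32 / \dim\aaa_3^2$, I obtain
\[
\sprod{\asmofrts 32}{\Ht_0(a)} \ge \sprod{\asmofrts 32}{T} + \sprod{(\asmofrts 32)_{P_1}}{\Ht_0(a) - T},
\]
which converts the bound above into the stated estimate. The mildly delicate point, more bookkeeping than genuine difficulty, is to verify that the level dependence $\level(K)^s$ propagates uniformly through each reduction; this is transparent from the explicit dependence stated in Proposition \ref{prop: subsetY} and in Lemma \ref{lem: sob}, part \ref{part: compact conjugation}.
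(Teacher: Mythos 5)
Your proposal follows exactly the route the paper itself indicates: the authors state that Corollary \ref{cor: subsetY} ``follows from this proposition exactly like Corollary \ref{cor: altsumunip} follows from Proposition \ref{prop: altsumunipa},'' and you carry out precisely that reduction (left $P_1(\Q)N_2(\A)$-invariance to put $g$ in a Siegel set, Lemma \ref{lem: sob} part \eqref{part: compact conjugation} to pass to $g=a$, Proposition \ref{prop: subsetY} with its $P_1$ replaced by $P_3$, and the inequality \eqref{eq: htp} to convert $e^{-\sprod{\asmofrts 32}{\Ht_0(a)}}$ into the two-factor exponential), together with the same level-tracking via $\fund_{P_3,\fin}(\prmlvl)\subset K\cap N_3(\A_f)$. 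One small caveat, applicable equally to the paper's one-line proof: for fixed $\eta\in M_3(\Q)$ the invariance of the constrained sum under $\delta\in P_1(\Q)$ produces the condition $I_P(\delta^{-1}\eta\delta\cdot\nu)\subset V$ with $\delta^{-1}\eta\delta\in P_3(\Q)$ rather than $M_3(\Q)$; one either reads $I_P$ via the $M_P$-projection convention so that Proposition \ref{prop: subsetY} applies with $\eta$ allowed in $P_3(\Q)$, or notes that only the Siegel-set values of $g$ are needed in the application — your parenthetical ``conjugating $\eta$ appropriately'' gestures at but does not fully spell out this point, just as the paper leaves it implicit.
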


Recall the equivalence relation $\sim$
on $G(\Q)$ introduced in the introduction:
for $\gamma,\delta\in G(\Q)$ we write $\gamma\sim\delta$ if $\gamma\sim_w\delta$ and $\gamma$ and $\delta$ are conjugate in
$G(\overline{\Q})$. Let $\Orb$ be the set of equivalence classes of $\sim$.

Let $\orb=\corb\cap\conj$ be an equivalence class of $\sim$, where $\corb$ is a coarse class and $\conj$ is a geometric conjugacy class of $G$, and let $\overline{\conj}$ be the Zariski
closure of $\conj$.
Applying Corollary \ref{cor: subsetY} to the closed varieties $V=\overline{\conj}$ and $V=\overline{\conj}\setminus\conj$ and subtracting, we obtain:
\begin{corollary} \label{cor: altsumorb}
Let $P_1\subset P_3\subset P_2$ be standard parabolic subgroups of $G$.
There exist an integer $s$ and $X_1,\dots,X_m\in\univ(\Lieg^1)$ such that for any $\eta\in M_3(\Q)$ and $\orb\in\Orb$ we have
\begin{multline}
\abs{\sum_{P:P_3\subset P\subset P_2}(-1)^{\dim\aaa_P}
\sum_{\nu\in N_{3}^P(\Q):I_P(\eta\nu)=\orb}\int_{N_P(\A)}f(g^{-1}\nu ng)\ dn}\le \\ \level (K)^s
e^{-\sprod{\asmofrts 32}T}e^{-\sprod{(\asmofrts 32)_{P_1}}{\Ht_0(g)-T}}
\sum_{i=1}^m\int_{N_3(\A)}\abs{(f*X_i)(g^{-1}ng)}\ dn
\end{multline}
for any $f\in\funct(G(\A)^1;K)$ and $g\in G(\A)^1$ such that $F^1(g,T)\tau_1^2(H_{P_1}(g)-T)=1$.
\end{corollary}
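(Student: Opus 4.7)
The plan is to follow the remark immediately preceding the statement: apply Corollary \ref{cor: subsetY} twice, with $V = \overline{\conj}$ and $V = \overline{\conj} \setminus \conj$ respectively, and subtract. Here $\orb = \corb \cap \conj$ decomposes the $\sim$-equivalence class as the intersection of a coarse class $\corb$ and a $G(\overline{\Q})$-conjugacy class $\conj$. Both $\overline{\conj}$ and $\overline{\conj} \setminus \conj$ are closed subvarieties of $G$, since any $G(\overline{\Q})$-conjugacy class is locally closed in its Zariski closure.

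The key observation is a set-theoretic identity. For $\eta \in M_3(\Q)$ and $\nu \in N_3^P(\Q)$, the set $I_P(\eta\nu) = \geom I_P(\eta\nu) \cap \corb_{\eta\nu}$ is itself an equivalence class of $\sim$. Furthermore, by Lemma \ref{lem: arthur} the semisimple part of $\eta\nu$ is $P$-conjugate to that of $\eta$, so $\corb_{\eta\nu} = \corb_\eta$ does not depend on $\nu$. If $\corb_\eta \ne \corb$ the sum on the left-hand side is empty and there is nothing to prove; otherwise the condition $I_P(\eta\nu) = \orb$ reduces to $\geom I_P(\eta\nu) = \conj$. Since $\geom I_P(\eta\nu)$ is a single $G(\overline{\Q})$-conjugacy class, this last condition is in turn equivalent to $\geom I_P(\eta\nu) \subset \overline{\conj}$ together with $\geom I_P(\eta\nu) \not\subset \overline{\conj} \setminus \conj$.

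With these preliminaries in place, the sum appearing in the statement equals the difference of the two sums to which Corollary \ref{cor: subsetY} applies (noting, as at the opening of the proof of Proposition \ref{prop: subsetY}, that $I_P(\eta\nu) \subset V$ is equivalent to $\geom I_P(\eta\nu) \subset V$ for closed $V$, because $I_P(\eta\nu)$ is Zariski dense in $\geom I_P(\eta\nu)$). The triangle inequality combined with the bound of Corollary \ref{cor: subsetY} applied separately to each of the two subvarieties yields the required estimate, with the implicit constant enlarged by at most a factor of two. The only conceptual step beyond routine bookkeeping is the independence of $\corb_{\eta\nu}$ from $\nu$, which is a standard consequence of Lemma \ref{lem: arthur} and is already implicit in the identity \eqref{eq: lp}; everything else is a direct appeal to the previously established Corollary \ref{cor: subsetY}.
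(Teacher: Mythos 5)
Your proof is correct and follows the paper's approach exactly: the paper itself simply states, in the sentence immediately preceding the corollary, that one applies Corollary \ref{cor: subsetY} to $V=\overline{\conj}$ and $V=\overline{\conj}\setminus\conj$ and subtracts. Your elaboration — that $\corb_{\eta\nu}=\corb_\eta$ is independent of $\nu$ by Lemma \ref{lem: arthur}, that $I_P(\eta\nu)=\orb$ is therefore equivalent (when $\corb_\eta=\corb$) to $\geom I_P(\eta\nu)\subset\overline{\conj}$ together with $\geom I_P(\eta\nu)\not\subset\overline{\conj}\setminus\conj$, and that the factor of two from the triangle inequality is absorbed by doubling the list of $X_i$ — correctly fills in what the paper leaves implicit.
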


\section{The main result} \label{sec: main}
We are now ready to prove the continuity of the decomposition
of the geometric side with respect to the equivalence relation
$\sim$.
Observe that if $\sigma\in G(\Q)$ is semisimple then the equivalence classes with respect to $\sim$ in the coarse class of $\sigma$
are indexed by the geometric unipotent conjugacy classes of the centralizer of $\sigma$ containing a rational point.
In particular,
\begin{equation} \label{eq: fincoarse}
\text{the number of equivalence classes of $\sim$ in a coarse class is bounded in terms of $G$ only.}
\end{equation}


Following \cite{MR3427596, 1412.8673}, we define for any $\orb\in\Orb$
\[
k_{\orb}(x)=\sum_{\gamma\in\orb}f(x^{-1}\gamma x)
\]
and
\[
k^T_{\orb}(x)=\sum_{P\supset P_0}(-1)^{\dim\aaa_P}\sum_{\delta\in P(\Q)\bs G(\Q)}k_{\orb,P}(\delta x)\hat\tau_P(H_P(\delta x)-T),
\]
where
\[
k_{\orb,P}(x)=\sum_{\gamma\in M_P(\Q):I_P(\gamma)=\orb}\int_{N_P(\A)}f(x^{-1}\gamma nx)\ dn.
\]
Let
\[
J^T_\orb(f)=\int_{G(\Q)\bs G(\A)^1}k^T_\orb(x)\ dx.
\]

Exactly as before, we can write
\[
k^T_{\orb}(x)=\sum_{P_1\subset P_2}\sum_{\delta\in P_1(\Q)\bs G(\Q)}F^{P_1}(\delta x,T)\sigma_1^2(H_{P_1}(\delta x)-T)k_{\orb,1,2}(\delta x),
\]
where
\[
k_{\orb,1,2}(x)=\sum_{P:P_1\subset P\subset P_2}(-1)^{\dim\aaa_P}k_{\orb,P}(x).
\]
Clearly,
\[
k^T(x)=\sum_{\orb\in\Orb}k^T_{\orb}(x).
\]

\begin{theorem} \label{thm: main}
The analogue of Theorem \ref{thm: maincoarse} holds
for the integrals $J^T_\orb (f)$, $\orb \in \Orb$, instead of
$J^T_\corb (f)$, $\corb \in \cOrb$.
\end{theorem}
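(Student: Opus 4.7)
The plan is to imitate the proof of Theorem \ref{thm: maincoarse} verbatim, with Corollary \ref{cor: altsumunip} replaced by the finer Corollary \ref{cor: altsumorb}. As in the coarse case, parts (1), (2) and (4) of the statement follow from the estimate in part (3) together with \eqref{eq: P=G} and Arthur's polynomial argument of \cite[\S2]{MR625344}. The task therefore reduces to establishing an inequality of the shape
\[
\sum_{\orb\in\Orb}\int_{G(\Q)\bs G(\A)^1}\abs{F(x,T)k_{\orb}(x)-k^T_{\orb}(x)}\ dx\le\mu(f)(1+\norm{T})^re^{-cd(T)}
\]
for a suitable continuous seminorm $\mu$ on $\funct(G(\A)^1;K)$ and a positive constant $c$ depending only on $G$. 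Exactly as in \S\ref{sec: contcoarse}, one writes $k_{\orb,1,2}(x)=\sum_{P_1\subset P_3\subset P_2}k_{\orb,1,2;3}(x)$, where
\[
k_{\orb,1,2;3}(x)=\sum_{\eta\in M_3(\Q)\rg{1}}\sum_{P:P_3\subset P\subset P_2}(-1)^{\dim\aaa_P}\sum_{\nu\in N_3^P(\Q):I_P(\eta\nu)=\orb}\int_{N_P(\A)}f(x^{-1}\eta\nu nx)\ dn,
\]
and it suffices to bound, for each triple $P_1\subset P_3\subset P_2$ with $P_1\ne P_2$, the integral of $\sum_{\orb}\abs{k_{\orb,1,2;3}(x)}$ over those $x\in P_1(\Q)\bs G(\A)^1$ for which $F^1(x,T)\sigma_1^2(H_{P_1}(x)-T)=1$.

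For each fixed $\eta$, I apply Corollary \ref{cor: altsumorb} to the left translate $h(y)=f(g^{-1}\eta g\cdot y)$; since left translation preserves $L^1$-norms and commutes with left-invariant differential operators, this yields the pointwise bound
\begin{multline*}
\Bigl|\sum_{P:P_3\subset P\subset P_2}(-1)^{\dim\aaa_P}\sum_{\nu\in N_3^P(\Q):I_P(\eta\nu)=\orb}\int_{N_P(\A)}f(g^{-1}\eta\nu ng)\ dn\Bigr|\\
\le\level(K)^se^{-\sprod{\asmofrts 32}T}e^{-\sprod{(\asmofrts 32)_{P_1}}{\Ht_0(g)-T}}\sum_{i=1}^m\int_{N_3(\A)}\abs{(f*X_i)(g^{-1}\eta ng)}\ dn
\end{multline*}
for $g$ satisfying $F^1(g,T)\sigma_1^2(H_{P_1}(g)-T)=1$. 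Crucially, the right-hand side depends on $\eta$ but not on the particular orbit $\orb$, and by \eqref{eq: lp} the condition $I_P(\eta\nu)=\orb$ forces $\orb$ to lie inside the coarse class of $\eta$, of which there are boundedly many by \eqref{eq: fincoarse}. Summing first over those $\orb$ inside the coarse class of $\eta$ and then over $\eta\in M_3(\Q)\rg{1}$ therefore costs only a bounded multiplicative factor depending on $G$, and produces an expression of exactly the same form as that obtained in the coarse proof, with the sole modification that $\smofrts 12$ has been replaced by its positive multiple $\asmofrts 12$.

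The remainder of the argument is now identical to the end of the proof of Theorem \ref{thm: maincoarse}: one reduces to $f\ge0$ and $K=\K_{\fin}$ via Lemma \ref{lem: sob} part \ref{part: pos}, uses the Iwasawa decomposition relative to $P_3$ to transfer the integration to $A_{M_3}\times M_3(\A)^1$, integrates over $A_{M_3}$ using Lemma \ref{lem: integm1'} together with Remark \ref{rem: xitxi} (which permits the replacement of $\smofrts 32$ by $\asmofrts 32$), and finishes by invoking Theorem \ref{thm: analog} applied to $M_3$ together with Lemma \ref{lem: sob} part \ref{part: consterm}. The resulting exponential factor $e^{-\sprod{\asmofrts 12}T}$ dominates $e^{-cd(T)}$ with $c=1/\dim\aaa_0>0$ whenever $P_1\ne P_2$ and $d(T)>0$, so the decay rate is positive (only the implicit constant $c$ is weaker than in the coarse case). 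The polynomial dependence of all seminorms on $\level(K)$ is inherited directly from Corollary \ref{cor: altsumorb} and Theorem \ref{thm: analog}. The only genuinely new input beyond the coarse setting is Corollary \ref{cor: altsumorb} itself, whose proof -- resting on the refined combinatorial cancellation of Lemma \ref{lem: altsumvs2}, the induction-theoretic properties \eqref{eq: transitive}--\eqref{eq: monind} of Lusztig--Spaltenstein orbits, and the bounded-degree polynomial of Lemma \ref{lem: bndeg} -- constitutes the main technical obstacle, already dispatched in \S\ref{sec: modif}.
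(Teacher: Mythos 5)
Your proof follows the paper's own proof essentially verbatim: as in Theorem \ref{thm: maincoarse}, the heart of the matter is part (3), obtained from the decomposition $k_{\orb,1,2}=\sum_{P_3}k_{\orb,1,2;3}$, the pointwise bound of Corollary \ref{cor: altsumorb} applied to left translates, the uniform finiteness \eqref{eq: fincoarse} of the fine classes inside a coarse class, and Remark \ref{rem: xitxi} to accommodate $\asmofrts 32$ in place of $\smofrts 32$ when invoking Lemma \ref{lem: integm1'}. This is exactly how the paper argues.

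One small inaccuracy in the final bookkeeping: the argument does not actually replace $\smofrts 12$ by $\asmofrts 12$. Corollary \ref{cor: altsumorb} produces the factor $e^{-\sprod{\asmofrts 32}T}$, while Theorem \ref{thm: analog} applied to $M_3$ (with $P=P_1\cap M_3$) still produces $e^{-\sprod{\smofrts 13}T}$, so the resulting exponential factor is $e^{-\sprod{\asmofrts 32+\smofrts 13}T}$, which is generally neither $e^{-\sprod{\smofrts 12}T}$ nor $e^{-\sprod{\asmofrts 12}T}$. Moreover, your decay constant $c=1/\dim\aaa_0$ is overly pessimistic: if $P_1\subsetneq P_3$ then $\sprod{\smofrts 13}T\ge d(T)$, while if $P_1=P_3$ (forcing $P_3\subsetneq P_2$) one has $\sprod{\asmofrts 32}T=\sprod{\smofrts 32}T/\dim\aaa_3^2\ge d(T)$ since $\smofrts 32$ is a sum of exactly $\dim\aaa_3^2$ simple roots each pairing with $T$ to at least $d(T)$. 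Either way the exponent is at least $d(T)$, yielding $e^{-d(T)}$ with no loss of constant, which is precisely the estimate asserted in Theorem \ref{thm: maincoarse} and hence what ``the analogue'' requires.
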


\begin{proof}
The proof is very similar to that of Theorem \ref{thm: maincoarse}.
The main task is to prove the analogue of part \ref{part: estimcoarse} for the integrals $J^T_\orb (f)$.
We write as before
\[
k_{\orb,1,2}(x)=\sum_{P_1\subset P_3\subset P_2}k_{\orb,1,2;3}(x),
\]
where $k_{\orb,1,2;3}$ is the left $P_1(\Q)N_2(\A)$-invariant function given by
\[
k_{\orb,1,2;3}(x)=\sum_{\eta\in M_3(\Q)\rg{1}}\sum_{P:P_3\subset P\subset P_2}(-1)^{\dim\aaa_P}
\sum_{\nu\in N_3^P(\Q):I_P(\eta\nu)=\orb}\int_{N_P(\A)}f(x^{-1}\eta\nu nx)\ dn.
\]
We show that there exist $r\ge0$ and a continuous seminorm $\mu$ on $\funct(G(\A)^1;K)$, such that for any triplet
$P_1\subset P_3\subset P_2$ with $P_1\neq P_2$ we have
\[
\sum_{\orb}\int_{P_1(\Q)\bs G(\A)^1}F^1(x,T)\sigma_1^2(H_{P_1}(x)-T)\abs{k_{\orb,1,2;3}(x)}\ dx
\le \mu(f)(1+\norm{T})^re^{-d(T)}.
\]
For that we use Corollary \ref{cor: altsumorb} (applied to suitable left translates of $f$) to obtain
\[
\abs{k_{\orb,1,2;3}(x)}\le
e^{-\sprod{\asmofrts 32}T}e^{-\sprod{(\asmofrts 32)_{P_1}}{\Ht_0(x)-T}}
\sum_i\sum_{\eta\in M_3(\Q)\rg{1}\cap\corb}\int_{N_3(\A)}\abs{(f*X_i) (x^{-1}\eta nx)}\ dn,
\]
provided that $F^1(x,T)\sigma_1^2(H_{P_1}(x)-T)=1$, where $\corb$ is the coarse class containing $\orb$.
Taking into account Remark \ref{rem: xitxi} and \eqref{eq: fincoarse},
the rest of the argument proceeds as in the proof of Theorem \ref{thm: maincoarse}.
\end{proof}

We continue to write $J^T_{\orb}(f)$ for the value at $T$ of the polynomial $J^T_{\orb}(f)$, even if $d(T)\le d_0$.

\begin{corollary} \label{CorollaryContinuity}
For any $T\in\aaa_0$,
\[
f\mapsto\sum_{\orb\in\Orb}\abs{J^T_{\orb}(f)}
\]
is a continuous seminorm on $\funct(G(\A)^1)$.
On $\funct(G(\A)^1;K)$ this seminorm is bounded by
$c(T) \level (K)^s \norm{\cdot}_{G(\A)^1,\, t}$ for
a constant $c(T)$ and positive integers $s, t$ that do not depend on $K$.
\end{corollary}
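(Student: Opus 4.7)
The plan is to reduce the statement for arbitrary $T$ to the case $d(T)>d_0$ by polynomial interpolation, and then to prove the bound in that case directly from Theorem \ref{thm: main} and the global estimate \eqref{eq: P=G}.

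First I would handle the case $d(T)>d_0$. By Theorem \ref{thm: main} part \ref{part: estimcoarse} (the analogue for $\orb\in\Orb$) we have
\[
\sum_{\orb\in\Orb}\abs{J^T_{\orb}(f)}\le\sum_{\orb\in\Orb}\abs{\int_{G(\Q)\bs G(\A)^1_{\le T}}k_{\orb}(x)\ dx}+\mu(f)(1+\norm{T})^re^{-d(T)},
\]
with $\mu$ a continuous seminorm on $\funct(G(\A)^1;K)$ that is bounded by $c\level(K)^s\norm{\cdot}_{G(\A)^1,t}$. The first term is dominated by
\[
\int_{G(\Q)\bs G(\A)^1_{\le T}}\sum_{\gamma\in G(\Q)}\abs{f(x^{-1}\gamma x)}\ dx,
\]
which by \eqref{eq: P=G} is bounded by $\nu(f)(1+\norm{T})^r$ for another continuous seminorm $\nu$ of the same type (cf.\ the concluding assertion of Theorem \ref{thm: analog}, which gives $\mu=c\vol(K)^{-1}\mu_0^{G,1}$, and the estimate $\vol(K)^{-1}\ll\level(K)^{\dim G}$). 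Altogether, for any $T$ with $d(T)>d_0$,
\[
\sum_{\orb\in\Orb}\abs{J^T_{\orb}(f)}\le c(T)\level(K)^s\norm{f}_{G(\A)^1,t}
\]
with $c(T)\ll(1+\norm{T})^r$ and fixed $s,t$ independent of $K$ and $T$.

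For an arbitrary $T\in\aaa_0$, I would invoke the polynomiality statement from Theorem \ref{thm: main} part \ref{part: polynomial}: for each $\orb$, $T\mapsto J^T_{\orb}(f)$ is a polynomial on $\aaa_0$ of degree at most $d:=\dim\aaa_0$. Choose once and for all a finite family of points $T_1,\dots,T_N\in\aaa_0$ with $d(T_i)>d_0$ whose evaluations form a basis of the dual of the finite-dimensional space of polynomials on $\aaa_0$ of degree at most $d$ (such points exist since the set $\{T:d(T)>d_0\}$ is a non-empty open cone, which is Zariski dense). Let $p_1,\dots,p_N$ be the dual basis of polynomials, so that
\[
J^T_{\orb}(f)=\sum_{i=1}^N p_i(T)\,J^{T_i}_{\orb}(f)
\]
as polynomials in $T$, and hence pointwise in $T$. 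Summing the triangle inequality over $\orb$ gives
\[
\sum_{\orb\in\Orb}\abs{J^T_{\orb}(f)}\le\sum_{i=1}^N\abs{p_i(T)}\sum_{\orb\in\Orb}\abs{J^{T_i}_{\orb}(f)},
\]
and the case already treated bounds each inner sum by $c(T_i)\level(K)^s\norm{f}_{G(\A)^1,t}$.

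Combining the two steps yields the seminorm bound
\[
\sum_{\orb\in\Orb}\abs{J^T_{\orb}(f)}\le\Bigl(\sum_{i=1}^N c(T_i)\abs{p_i(T)}\Bigr)\level(K)^s\norm{f}_{G(\A)^1,t}
\]
on $\funct(G(\A)^1;K)$, with the factor in parentheses a constant $c(T)$ depending only on $T$. Continuity on the inductive limit $\funct(G(\A)^1)$ then follows from continuity on each $\funct(G(\A)^1;K)$. I do not expect any serious obstacle; the only care needed is the explicit polynomial dependence on $\level(K)$, which is already built into the last assertion of Theorem \ref{thm: maincoarse} and its analogue in Theorem \ref{thm: main}.
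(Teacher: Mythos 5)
Your proof is correct and takes essentially the same approach as the paper: both invoke Theorem \ref{thm: main} (the analogue of Theorem \ref{thm: maincoarse}, part \ref{part: estimcoarse}) together with \eqref{eq: P=G} for $d(T)>d_0$, and then extend to arbitrary $T$ by polynomial extrapolation in $T$. Your version merely spells out the interpolation step and the passage from $\vol(K)^{-1}$ to $\level(K)^s$, which the paper leaves implicit.
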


\begin{proof}
By extrapolation, it is enough to prove this for $d(T)>d_0$.
The claim follows immediately from Theorem \ref{thm: main} and the fact (see \eqref{eq: P=G}) that
\[
\sum_{\corb\in\cOrb}\int_{G(\Q)\bs G(\A)^1_{\le T}}\abs{k_{\corb}(x)}\ dx
\]
is a continuous seminorm on $\funct(G(\A)^1;K)$.
\end{proof}

\begin{remark} \label{FinerRelation}
Motivated by \cite[\S8]{MR835041}, we may consider instead of the equivalence relation $\sim$ on $G(\Q)$, the slightly finer relation
$\gamma\fsim\delta$ if $\delta=g\gamma g^{-1}$ for some $g\in G(\Q)G_{\spart{\gamma}}(\overline{\Q})$.
Thus, the $\fsim$-classes in the coarse class of a semisimple element $\sigma\in G(\Q)$ are indexed by the geometric unipotent classes
of $G_\sigma$ containing a rational point.
In particular, in the case where $G^{\der}$ is simply connected, $\fsim$ coincides with $\sim$, since the centralizers of semisimple elements are then connected (cf. \cite[pp. 788--789]{MR683003}).
In general, Theorem \ref{thm: main} continues to hold for the classes of $\fsim$.
The proof is identical except that in the case at hand $I^G_P (\gamma)$ will be defined to be the $\fsim$-class
of an element of $\spart{\gamma}I_{P_{\spart{\gamma}}}^{G_{\spart{\gamma}}}(\upart{\gamma})$.
In practice, this refinement is not very essential, since in most applications of the trace formula we can reduce to the case where $G^{\der}$ is simply connected
by considering a $z$-extension of $G$.
\end{remark}

\def\cprime{$'$}
\providecommand{\bysame}{\leavevmode\hbox to3em{\hrulefill}\thinspace}
\providecommand{\MR}{\relax\ifhmode\unskip\space\fi MR }
\providecommand{\MRhref}[2]{%
  \href{http://www.ams.org/mathscinet-getitem?mr=#1}{#2}
}
\providecommand{\href}[2]{#2}

\end{document}